\def\blx@maxline{77}
\numberwithin{equation}{section}
\newcommand{\Z}{\mathbb{Z}}
\newcommand{\C}{\mathbb{C}}
\newcommand{\R}{\mathbb{R}}
\renewcommand{\i}{\mathbf{i}}
\DeclareMathOperator{\EE}{\mathbb{E}}
\newcommand{\al}{\alpha}
\newcommand{\la}{\lambda}
\newcommand{\be}{\beta}
\newcommand{\ka}{\varkappa}
\newcommand{\md}{\,|\,}
\DeclareMathOperator{\prob}{\mathbb{P}}
\renewcommand{\Re}{\mathop{\mathrm{Re}}}
\renewcommand{\Im}{\mathop{\mathrm{Im}}}
\newcommand{\HeightFunction}{\mathfrak{h}}
\newcommand{\HeightFunctionBernoulli}{\mathfrak{h}^{\textnormal{Ber}}}
\newcommand{\widehatHeightFunctionBernoulli}{\widehat{\mathfrak{h}}^{\textnormal{Ber}}}
\newcommand{\HeightFunctionGenBernoulli}{{\mathfrak{h}}^{\textnormal{gBer}}}
\newcommand{\HSOp}{\mathscr{D}}
\newcommand{\WhittakerOp}{\mathscr{W}}
\newcommand{\MacdonaldOp}{\mathscr{M}}
\newcommand{\ShiftOp}{\mathscr{T}}
\newcommand{\WhittakerMeas}{\mathbf{W}}
\newcommand{\SchurMeas}{\mathbf{S}}
\newcommand{\WhittakerProc}{\mathbf{WP}}
\newcommand{\nua}{\mathsf{c}}
\newcommand{\FProb}{\mathsf{F}^{\textnormal{stoch}}}
\newcommand{\FDenominators}[1]{{\mathsf{F}}^{\scriptscriptstyle(#1)}}
\newcommand{\aind}{{\upalpha}}
\newcommand{\bind}{{\upbeta}}
\newcommand{\Part}{\mathbb{P}}
\newcommand{\Partt}{\mathbb{P}'}
\newcommand{\cont}{\boldsymbol\gamma}
\newcommand{\SSS}{\mathcal{S}}
\newcommand{\CCC}{\mathcal{C}}
\newcommand{\SSST}{\widetilde{\mathcal{S}}}
\newcommand{\CCCT}{\widetilde{\mathcal{C}} }
\newcommand{\ttime}{\mathsf{t}}
\newcommand{\pgeom}{\mathbf{p}}
\newcommand{\x}{\mathbf{x}}
\newcommand{\X}{\mathbf{X}}
\newcommand{\y}{\mathbf{y}}
\newcommand{\xhs}{\mathsf{x}}
\newcommand{\xhsp}{\mathsf{x}'}
\newcommand{\xhspvec}{\vec{\mathsf{x}}'}
\newcommand{\yhs}{\mathsf{y}}
\newcommand{\yhsgb}{\mathsf{y}^{\GB{.12}}}
\newcommand{\yhsbg}{\mathsf{y}^{\BG{.12}}}
\newcommand{\yhsgbvec}{{\vec{\mathsf{y}}}^{\GB{.12}}}
\newcommand{\yhsbgvec}{{\vec{\mathsf{y}}}^{\,\BG{.12}}}
\newcommand{\yhsp}{\mathsf{y}^{\scriptscriptstyle{\boldsymbol\dagger}}}
\newcommand{\LLL}{\mathsf{L}}
\newcommand{\SP}{\mathsf{s}}
\newcommand{\GB}[1]{\scalebox{#1}{\begin{tikzpicture}
[scale=1, line width=8pt]
\draw[->] (0,0)--++(1,0)--++(0,1);
\end{tikzpicture}}}
\newcommand{\BG}[1]{\scalebox{#1}{\begin{tikzpicture}
[scale=1, line width=8pt]
\draw[->] (0,0)--++(0,1)--++(1,0);
\end{tikzpicture}}}
\def\epsx{.12}
\newcommand{\vertexoo}[1]{\scalebox{#1}{\begin{tikzpicture}
[scale=1, very thick]
\node (i1) at (0,-1) {$g$};
\node (j1) at (-1,0) {$0$};
\node (i2) at (0,1) {$g$};
\node (j2) at (1,0) {$0$};
\draw[densely dotted] (j1) -- (j2);
\foreach \shi in {(0,0), (\epsx,0), (-\epsx,0)}
{\begin{scope}[shift=\shi]
\node (shi1) at (0,-1) {\phantom{$g$}};
\node (shi2) at (0,1) {\phantom{$g$}};
\draw[->] (shi1) -- (shi2);
\draw[->] (shi1) --++ (0,.5);
\end{scope}}
\end{tikzpicture}}}
\newcommand{\vertexol}[1]{\scalebox{#1}{\begin{tikzpicture}
[scale=1, very thick]
\node (i1) at (0,-1) {$g$};
\node (j1) at (-1,0) {$0$};
\node (i2) at (0,1) {$g-1$};
\node (j2) at (1,0) {$1$};
\foreach \shi in {(0,0), (-\epsx,0)}
{\begin{scope}[shift=\shi]
\node (shi1) at (0,-1) {\phantom{$g$}};
\node (shi2) at (0,1) {\phantom{$g$}};
\draw[->] (shi1) -- (shi2);
\draw[->] (shi1) --++ (0,.5);
\end{scope}}
\foreach \shi in {(\epsx,0)}
{\begin{scope}[shift=\shi]
\node (shi1) at (0,-1) {\phantom{$g$}};
\node (shi2) at (0,1) {\phantom{$g$}};
\draw[->] (shi1) -- (0,0) -- (j2);
\draw[->] (shi1) --++ (0,.5);
\end{scope}}
\draw[densely dotted] (j1) -- (j2);
\end{tikzpicture}}}
\newcommand{\vertexll}[1]{\scalebox{#1}{\begin{tikzpicture}
[scale=1, very thick]
\node (i1) at (0,-1) {$g$};
\node (i1shm1) at (-\epsx,-1) {\phantom{$g$}};
\node (i1sh1) at (\epsx,-1) {\phantom{$g$}};
\node (j1) at (-1,0) {$1$};
\node (i2) at (0,1) {$g$};
\node (i2shm1) at (-\epsx,1) {\phantom{$g$}};
\node (i2sh1) at (\epsx,1) {\phantom{$g$}};
\node (j2) at (1,0) {$1$};
\draw[densely dotted] (j1) -- (j2);
\draw[densely dotted] (i1) -- (i2);
\draw[->] (j1) -- (-\epsx*1.5,0)--++(.5*\epsx,.5*\epsx) -- (i2shm1);
\draw[->] (i1shm1) -- (-\epsx,-\epsx) -- (0,\epsx) -- (i2);
\draw[->] (i1) -- (0,-\epsx) -- (\epsx,\epsx) -- (i2sh1);
\draw[->] (i1sh1) -- (\epsx,-.5*\epsx)--++(.5*\epsx,.5*\epsx) -- (j2);
\draw[->] (j1) --++ (.5,0);
\draw[->] (i1) --++ (0,.5);
\draw[->] (i1sh1) --++ (0,.5);
\draw[->] (i1shm1) --++ (0,.5);
\end{tikzpicture}}}
\newcommand{\vertexlo}[1]{\scalebox{#1}{\begin{tikzpicture}
[scale=1, very thick]
\node (i1) at (0,-1) {$g$};
\node (j1) at (-1,0) {$1$};
\node (i2) at (0,1) {$g+1$};
\node (i2shm2) at (-3/2*\epsx,1) {\phantom{$g$}};
\node (j2) at (1,0) {$0$};
\draw[densely dotted] (j1) -- (j2);
\draw[->] (j1) -- (-3/2*\epsx,0) -- (i2shm2);
\foreach \shi in {(1/2*\epsx,0), (3/2*\epsx,0), (-1/2*\epsx,0)}
{\begin{scope}[shift=\shi]
\node (shi1) at (0,-1) {\phantom{$g$}};
\node (shi2) at (0,1) {\phantom{$g$}};
\draw[->] (shi1) -- (shi2);
\draw[->] (shi1) --++ (0,.5);
\end{scope}}
\draw[->] (j1) --++ (.5,0);
\end{tikzpicture}}}
\newtheorem{proposition}{Proposition}[section]
\newtheorem{lemma}[proposition]{Lemma}
\newtheorem{corollary}[proposition]{Corollary}
\newtheorem{theorem}[proposition]{Theorem}
\newtheorem{introtheorem}[proposition]{Theorem}
\theoremstyle{definition}
\newtheorem{definition}[proposition]{Definition}
\newtheorem{remark}[proposition]{Remark}
\begin{document}
\title{Stochastic Higher Spin Six Vertex Model and $q$-TASEP\lowercase{s}}

\author[D. Orr]{Daniel Orr}
\address{D. Orr,
Department of Mathematics (MC 0123), 460 McBryde Hall, Virginia Tech,
225 Stanger Street, Blacksburg, VA 24061 USA}
\email{dorr@vt.edu}

\author[L. Petrov]{Leonid Petrov}
\address{L. Petrov, University of Virginia, Department of Mathematics,
141 Cabell Drive, Kerchof Hall,
P.O. Box 400137,
Charlottesville, VA 22904, USA,
and Institute for Information Transmission Problems,
Bolshoy Karetny per. 19, Moscow, 127994, Russia}
\email{lenia.petrov@gmail.com}

\date{}

\begin{abstract}
  We present two new connections between the inhomogeneous stochastic higher spin six vertex model in a quadrant and integrable stochastic systems from the Macdonald processes hierarchy. First, we show how Macdonald $q$-difference operators with $t=0$ (an algebraic tool crucial for studying the corresponding Macdonald processes) can be utilized to get $q$-moments of the height function $\mathfrak{h}$ in the higher spin six vertex model first computed in \cite{BorodinPetrov2016inhom} using Bethe ansatz. This result in particular implies that for the vertex model with the step Bernoulli boundary condition, the value of $\mathfrak{h}$ at an arbitrary point $(N+1,T)\in\mathbb{Z}_{\ge2}\times\mathbb{Z}_{\ge1}$ has the same distribution as the last component $\lambda_N$ of a random partition under a specific $t=0$ Macdonald measure. On the other hand, it is known that $\mathbf{x}_N:=\lambda_N-N$ can be identified with the location of the $N$th particle in a certain discrete time $q$-TASEP started from the step initial configuration. The second construction we present is a coupling of this $q$-TASEP and the higher spin six vertex model (with the step Bernoulli boundary condition) along time-like paths providing an independent probabilistic explanation of the equality of $\mathfrak{h}(N+1,T)$ and $\mathbf{x}_N+N$ in distribution. Combined with the identification of averages of observables between the stochastic higher spin six vertex model and Schur measures (which are $t=q$ Macdonald measures) obtained recently in \cite{borodin2016stochastic_MM}, this produces GUE Tracy--Widom asymptotics for a discrete time $q$-TASEP with the step initial configuration and special jump parameters.
\end{abstract}

\maketitle

\setcounter{tocdepth}{1}
\tableofcontents
\setcounter{tocdepth}{3}

% \begin{comment}

\section{Introduction} % (fold)
\label{sec:introduction}

\subsection{Bethe ansatz and $q$-difference operators} % (fold)
\label{sub:bethe_ansatz_and_q_difference_operators}

The past decade has seen a wave of results on integrable (in other words, exactly solvable) stochastic systems in $(1+1)$ dimension beyond the free fermion (determinantal/Pfaffian) case, such as the partially asymmetric simple exclusion process (ASEP), directed random polymers, Macdonald measures and processes, the $q$-deformed totally asymmetric simple exclusion processes ($q$-TASEPs) and related systems, and $U_q(\widehat{\mathfrak{sl}_2})$ stochastic vertex models. Asymptotic analysis of these systems is based on the presence of concise exact formulas for averages of certain observables, often expressing the latter as multiple contour integrals. The asymptotic behavior places the above systems into the Kardar--Parisi--Zhang (KPZ) universality class (named after a stochastic PDE introduced thirty years ago \cite{KPZ1986}) --- a vaguely defined family of stochastic systems whose long-time and large-scale fluctuations are described by the GUE Tracy--Widom distribution \cite{tracy_widom1994level_airy} or one of its relatives. We refer to the survey \cite{CorwinKPZ} for details on the KPZ universality.

So far two principal mechanisms for getting explicit formulas triggering asymptotic analysis have been utilized: via some form of the Bethe ansatz, or via Macdonald $q$-difference operators. The goal of this paper is to describe new surprising structural connections between the two mechanisms, and also between some of the integrable stochastic systems solvable by these mechanisms. Let us begin by discussing the Bethe ansatz and the $q$-difference operator approaches in more detail.

\medskip

The coordinate formulation of the \emph{Bethe ansatz} goes back to \cite{Bethe1931} and postulates that one should look for eigenfunctions of a quantum integrable many-body system in the form of superposition of those for noninteracting bodies. This idea was used in the pioneering work of Tracy and Widom \cite{TW_ASEP1}, \cite{tracy2008fredholm}, \cite{TW_ASEP2} to diagonalize the Markov generator of the ASEP on the line. Later these methods (combined with Markov duality) were extended to other interacting particle systems in $(1+1)$ dimension such as the $q$-TASEPs\footnote{There is one continuous time (\emph{Poisson}) and two discrete time (\emph{geometric} and \emph{Bernoulli}) versions of the $q$-TASEP \cite{BorodinCorwin2013discrete}. The discussion in \Cref{sub:bethe_ansatz_and_q_difference_operators} applies to all of them.} and related models \cite{BorodinCorwinSasamoto2012}, \cite{BorodinCorwin2013discrete}, \cite{Povolotsky2013}, \cite{Corwin2014qmunu}, and further to the stochastic higher spin six vertex model \cite{CorwinPetrov2015}. Suitable degenerations of the latter system lead to ASEP and all $q$-TASEPs. The algebraic Bethe ansatz (based on the Yang--Baxter equation) can be used to solve an even more general stochastic higher spin six vertex model with inhomogeneities in both time and space direction, and produces $\ell$-fold contour integral formulas for the $q$-moments $\EE q^{\ell \HeightFunction(N+1,T)}$, $\ell\ge1$, of the height function $\HeightFunction$ in this model \cite{BorodinPetrov2016inhom}. This formula for the $q$-moments degenerates, in appropriate limits, to most known formulas of this type. We recall a particular case of the stochastic higher spin six vertex model in \Cref{sub:intro_coupling} below, and the fully general model and contour integral formulas in \Cref{sec:stochastic_higher_spin_six_vertex_model}.

\medskip

The most general instance of the \emph{$q$-difference operator} approach developed quite recently in \cite{BorodinCorwin2011Macdonald}, \cite{BCGS2013} applies to Macdonald measures on partitions $\la=(\la_1\ge \ldots\ge\la_N\ge0)$, $\la_i\in\Z$, and Macdonald processes on sequences of partitions. This approach is based on eigenrelations for Macdonald polynomials $P_\la(a_1,\ldots,a_N\md q,t)$ \cite[Ch. VI]{Macdonald1995} which are indexed by partitions $\la$, are symmetric in the $a_i$'s, and additionally depend on two Macdonald parameters $0\le q,t<1$. These eigenrelations imply that unnormalized probability weights under a Macdonald measure on partitions $\la$ (given by products of two Macdonald polynomials indexed by the same $\la$ and taken at $a_1,\ldots,a_N$ and, say, $b_1,\ldots,b_M$) are eigenfunctions of relatively simple $q$-difference operators in the variables $\{a_i\}_{i=1}^{N}$ viewed as parameters in this measure. The simplest such operator, $\MacdonaldOp_N$, has order one and is given in \eqref{Macdonald_operator}. The action of $\MacdonaldOp_N$ (and other higher order operators also diagonalized in the $P_\la$'s) can be written in a contour integral form. This produces contour integral expressions for averages of the corresponding eigenvalues with respect to Macdonald measures or processes.

Often an application of this method to concrete stochastic systems requires an independent argument to match an observable of interest to an observable of a Macdonald measure or process. Such a matching can be nontrivial: for example, it involves geometric Robinson--Schensted--Knuth correspondences for some models of random polymers \cite{Oconnell2009_Toda}, \cite{Seppalainen2012}, \cite{COSZ2011}, \cite{OSZ2012}, or $q$-analogues of these correspondences for the $q$-TASEPs \cite{OConnellPei2012}, \cite{BorodinPetrov2013NN}, \cite{MatveevPetrov2014}, \cite{pei2016qRSK} (for the continuous time $q$-TASEP a simpler construction was discovered earlier \cite[Section 3.3]{BorodinCorwin2011Macdonald}). Under these matchings for the $q$-TASEPs, the action of the operator $\MacdonaldOp_N$ with $t=0$ (denoted by $\WhittakerOp_N$ and called the \emph{first $q$-Whittaker operator}) produces $k$-fold nested contour integral formulas for the $q$-moments $\EE q^{k(\x_N+N)}$, $k\ge1$, where $\x_N$ is the location of the $N$-th particle in a $q$-TASEP started from the step initial configuration $\x_i(0)=-i$, $i=1,2,\ldots$. In \Cref{prop:qWhit_moments} we recall formulas pertaining to Macdonald measures with $t=0$ (called the \emph{$q$-Whittaker measures}), and in \Cref{prop:qTASEP_moments} discuss identification with $q$-TASEPs.

\medskip

These two ways of getting explicit formulas appear to be different. On the other hand, they both can be applied to the $q$-TASEPs, yielding the same formulas for the $q$-moments $\EE q^{k(\x_N+N)}$. Moreover, the essential structure of contour integral formulas for the $q$-moments of the $q$-TASEPs carries along Bethe ansatz lines all the way up to the inhomogeneous stochastic higher spin six vertex model.

The \textbf{first main result} of the present paper is an extension of the $q$-difference operator method from $q$-Whittaker measures~/~$q$-TASEPs to the inhomogeneous stochastic higher spin six vertex model. In more detail, we consider measures on partitions $\prob(\la)=\FProb_\la$ associated with arrow configurations in the stochastic higher spin six vertex model at a given horizontal slice. The probability weights $\FProb_\la$ can also be viewed as symmetric rational functions in spectral parameters of the vertex model (there is one spectral parameter per horizontal slice). We define certain conjugations $\HSOp_N$ of the $q$-Whittaker operators $\WhittakerOp_N$. The operators $\HSOp_N$ act on space inhomogeneities (varying from one vertical slice to another) of the stochastic higher spin six vertex model, and their action can be expressed in a contour integral form. In \Cref{lemma:action_is_nice} we show that the operators $\HSOp_N$ act nicely on specific linear combinations of unnormalized multiples $\FDenominators{M}_\la$ of the probabilities $\FProb_\la$. In contrast with the Macdonald case, this is no longer an eigenrelation and, moreover, the action on individual unnormalized weights is far from being nice. Nevertheless, as we show in \Cref{thm:action}, this still yields contour integral formulas for observables of the inhomogeneous stochastic higher spin six vertex model.

This result brings stochastic systems solvable by Bethe ansatz closer to the algebraic framework associated with (the $q$-Whittaker part of) the Macdonald hierarchy of symmetric functions and corresponding integrable stochastic systems.\footnote{Recently in \cite{deGierWheeler2016} a common generalization of the symmetric functions $\FProb_\la$ and the Macdonald symmetric polynomials was suggested, providing a hope for a more complete unification of the Bethe ansatz and the $q$-difference operator approaches.} As a byproduct, we also get a new independent proof of the contour integral formulas for the $q$-moments $\EE q^{\ell \HeightFunction(N+1,T)}$ which were first written down in \cite{BorodinPetrov2016inhom}.

% subsection bethe_ansatz_and_q_difference_operators (end)

\subsection{$q$-TASEP~/~vertex model coupling along time-like paths} % (fold)
\label{sub:intro_coupling}

To formulate the next result let us recall the definition of the \emph{homogeneous stochastic higher spin six vertex model with the step Bernoulli boundary condition} from \cite{CorwinPetrov2015}.\footnote{The term ``step Bernoulli'' follows the recent work \cite{AmolBorodin2016Phase} where it is connected with the step Bernoulli (also sometimes called half stationary) initial configuration for the ASEP \cite{TW_ASEP4}.} We assume that the main parameter $q\in(0,1)$ is fixed throughout the paper.

Fix parameters $0<\al<1$, $\be>0$, and denote $\tilde\be:=\be/(1+\be)$ and $\tilde\al:=\al/(1+\be)$. Consider a probability distribution on the space of infinite up-right paths in the quadrant $\Z_{\ge2}\times\Z_{\ge1}$, such that paths can share a vertical edge but not a horizontal edge (see \Cref{fig:intro_vertex_weights}, left). A new path can begin at each horizontal edge on the left boundary independently with probability $\tilde \be$. The distribution of the whole path collection is defined inductively using the stochastic vertex weights in \Cref{fig:intro_vertex_weights}, right, by determining the configuration in the first horizontal slice, then in the second horizontal slice given the configuration in the first one, etc. For each $N\ge1$ and $T\ge0$ let $\HeightFunctionBernoulli(N+1,T)$ be the number of vertical arrows in the $T$-th horizontal slice (i.e., crossing the horizontal line at height $T+\frac12$) strictly to the right of the location $N$. We have $\HeightFunctionBernoulli(N+1,0)\equiv0$.

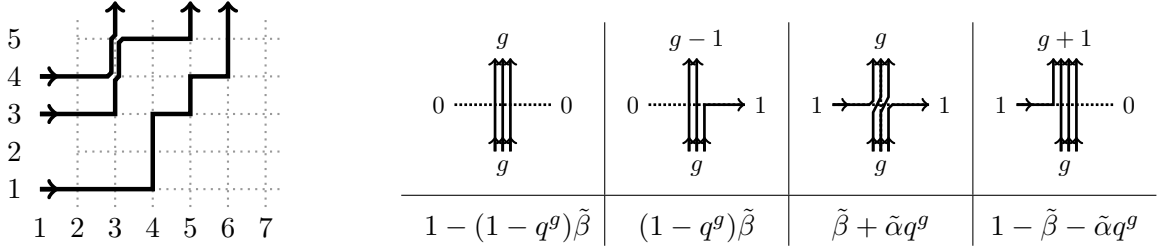
\begin{figure}[htbp]
	\raisebox{-40pt}{\begin{tikzpicture}
		[scale=.5,thick]
		\def\d{.1}
		\foreach \yyyy in {2,...,7}
		{
		\draw[dotted, opacity=.4] (\yyyy-2,5.5)--++(0,-5);
		\node[below] at (\yyyy-2,.5) {$\yyyy$};
		}
		\node[below] at (-1,.5) {$1$};
		\foreach \yyyy in {1,2,3,4,5}
		{
		\draw[dotted, opacity=.4] (0,\yyyy)--++(5.5,0);
		\node[left] at (-1.2,\yyyy) {$\yyyy$};
		}
		\draw[->, line width=1.7pt] (-1,1)--++(.5,0);
		\draw[->, line width=1.7pt] (-1,3)--++(.5,0);
		\draw[->, line width=1.7pt] (-1,4)--++(.5,0);
		\draw[->, line width=1.7pt]
		(-1,4)--++(2-2*\d,0)--++(\d,\d)--++(0,1-\d)--++(\d,\d)--++(0,1-\d);
		\draw[->, line width=1.7pt]
		(-1,3)--++(2,0)--++(0,1-\d)--++(\d,\d)--++(0,1-\d)--++(\d,\d)--++(2-2*\d,0)--++(0,1);
		\draw[->, line width=1.7pt]
		(-1,1)--++(3,0)--++(0,2)--++(1,0)--++(0,1)--++(1,0)
		--++(0,2);
	\end{tikzpicture}}\qquad\qquad
		\begin{tabular}{c|c|c|c}
		\vertexoo{.85}
		&\vertexol{.85}
		&\vertexll{.85}
		&\vertexlo{.85}
		\\
		\hline\rule{0pt}{15pt}
		$1-(1-q^g)\tilde \be$&
		$(1-q^g)\tilde \be$&
		$\tilde \be+\tilde\al q^g$&
		$1-\tilde \be-\tilde\al q^g$
	\end{tabular}
	\caption{A configuration and vertex weights in the homogeneous stochastic higher spin six vertex model with the step Bernoulli boundary condition. For this configuration we have, for example, $\HeightFunctionBernoulli(N+1,T)=2$, where $(N,T)=(2,3)$.}
	\label{fig:intro_vertex_weights}
\end{figure}

Next, let us recall the (homogeneous) \emph{geometric and Bernoulli $q$-TASEPs} introduced and solved by Bethe ansatz in \cite{BorodinCorwin2013discrete}. A connection of them with $q$-Whittaker processes was fully developed later in \cite{MatveevPetrov2014}. The geometric and Bernoulli $q$-TASEPs are discrete time Markov chains on the space of $L$-particle configurations $\vec x=\{x_1>x_2>\ldots>x_L\}$ in $\Z$ ($L\ge1$ is arbitrary and fixed) in which at most one particle per site is allowed, and particles jump only to the right.

\begin{figure}[htbp]
	\begin{adjustbox}{max width=.61\textwidth}
	\begin{tikzpicture}
		[scale=1,very thick]
		\def\pt{.17}
		\def\ee{.1}
		\def\h{.45}
		\draw[->] (-2,0) -- (7.5,0);
		\foreach \ii in {
		(-3*\h,0),
		(-2*\h,0),
		(-1*\h,0),
		(0*\h,0),
		(2*\h,0),
		(3*\h,0),(7*\h,0),(5*\h,0),(6*\h,0) ,(8*\h,0),(10*\h,0),(9*\h,0),(12*\h,0),(13*\h,0),(14*\h,0),(15*\h,0)}
		{
			\draw \ii circle(\pt);
		}
		\foreach \ii in {(\h,0),(4*\h,0),(11*\h,0)}
		{
			\draw[fill] \ii circle(\pt);
		}
		\node at (1*\h,-5*\ee) {$x_{3}(\ttime)$};
		\node at (4*\h,-5*\ee) {$x_{2}(\ttime)$};
		\node at (11*\h,-5*\ee) {$x_{1}(\ttime)$};
    \draw[->, very thick] (11*\h,.3)
    to [in=180, out=90] (11.5*\h,.6)
    to [in=90, out=0] (12*\h,.3);
    \draw[->, very thick] (11*\h,.3)
    to [in=180, out=90] (12*\h,.7)
    to [in=90, out=0] (13*\h,.3);
    \draw[->, very thick] (11*\h,.3)
    to [in=180, out=90] (12.5*\h,.8)
    to [in=90, out=0] (14*\h,.3);
    \draw[->, very thick] (11*\h,.3)
    to [in=180, out=90] (13*\h,.9)
    to [in=90, out=0] (15*\h,.3);
    \draw[->, very thick] (11*\h,.3)
    to [in=180, out=90] (13.5*\h,1)
    to [in=150, out=0] (16*\h,.8);
    \draw[->, very thick] (4*\h,.3)
    to [in=180, out=90] (4.5*\h,.6)
    to [in=90, out=0] (5*\h,.3);
    \draw[->, very thick] (4*\h,.3)
    to [in=180, out=90] (5*\h,.7)
    to [in=90, out=0] (6*\h,.3);
    \draw[->, very thick] (4*\h,.3)
    to [in=180, out=90] (5.5*\h,.8)
    to [in=90, out=0] (7*\h,.3);
    \draw[->, very thick] (4*\h,.3)
    to [in=180, out=90] (6*\h,.9)
    to [in=90, out=0] (8*\h,.3);
    \draw[->, very thick] (4*\h,.3)
    to [in=180, out=90] (6.5*\h,1)
    to [in=90, out=0] (9*\h,.3);
    \draw[->, very thick] (4*\h,.3)
    to [in=180, out=90] (7*\h,1.1)
    to [in=90, out=0] (10*\h,.3);
    \draw[->, very thick] (1*\h,.3)
    to [in=180, out=90] (1.5*\h,.6)
    to [in=90, out=0] (2*\h,.3);
    \draw[->, very thick] (1*\h,.3)
    to [in=180, out=90] (2*\h,.7)
    to [in=90, out=0] (3*\h,.3);
	\end{tikzpicture}
	\end{adjustbox}
	\caption{Geometric $q$-TASEP, with possibilities of jumps indicated.}
	\label{fig:intro_geometric}
\end{figure}
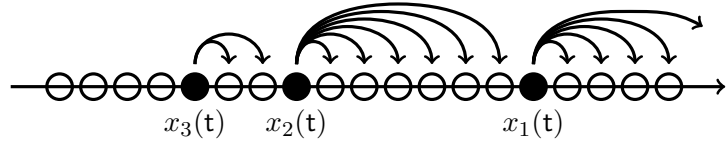
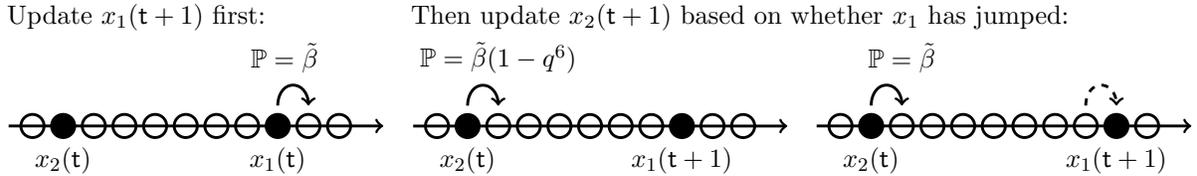
\begin{figure}[htbp]
	\begin{center}
		\begin{tabular}{lll}
		{\small\rm{}Update $x_1(\ttime+1)$ first:}&
		\multicolumn{2}{l}{\small\rm{}{Then update $x_2(\ttime+1)$ based on
		whether $x_1$ has jumped:}}\\
		\begin{adjustbox}{max width=.31\textwidth}
		\begin{tikzpicture}
			[scale=1,very thick]
			\def\pt{.17}
			\def\ee{.1}
			\def\h{.45}
			\draw[->] (1,0) -- (6.5,0);
			\foreach \ii in {(3*\h,0),(7*\h,0),(5*\h,0),(6*\h,0) ,(8*\h,0),(10*\h,0),(9*\h,0),(12*\h,0),(13*\h,0)}
			{
				\draw \ii circle(\pt);
			}
			\foreach \ii in {(4*\h,0),(11*\h,0)}
			{
				\draw[fill] \ii circle(\pt);
			}
			\node at (4*\h,-5*\ee) {$x_{2}(\ttime)$};
			\node at (11*\h,-5*\ee) {$x_{1}(\ttime)$};
		    \draw[->, very thick] (11*\h,.3) to [in=180, out=90] (11.5*\h,.6)
		    to [in=90, out=0] (12*\h,.3)
		    node [xshift=-10,yshift=20] {$\prob=\tilde \be$};
		\end{tikzpicture}
		\end{adjustbox}
		&
		\begin{adjustbox}{max width=.31\textwidth}
		\begin{tikzpicture}
			[scale=1,very thick]
			\def\pt{.17}
			\def\ee{.1}
			\def\h{.45}
			\draw[->] (1,0) -- (6.5,0);
			\foreach \ii in {(3*\h,0),(7*\h,0),(5*\h,0),(6*\h,0) ,(8*\h,0),(10*\h,0),(9*\h,0),(12*\h,0),(13*\h,0)}
			{
				\draw \ii circle(\pt);
			}
			\foreach \ii in {(4*\h,0),(11*\h,0)}
			{
				\draw[fill] \ii circle(\pt);
			}
			\node at (4*\h,-5*\ee) {$x_{2}(\ttime)$};
			\node at (11*\h,-5*\ee) {$x_{1}(\ttime+1)$};
		    \draw[->, very thick] (4*\h,.3) to [in=180, out=90] (4.5*\h,.6)
		    to [in=90, out=0] (5*\h,.3)
		    node [xshift=0,yshift=20] {$\prob=
		    \tilde\be(1-q^6)$};
		\end{tikzpicture}
		\end{adjustbox}
		&
		\begin{adjustbox}{max width=.31\textwidth}
		\begin{tikzpicture}
			[scale=1,very thick]
			\def\pt{.17}
			\def\ee{.1}
			\def\h{.45}
			\draw[->] (1,0) -- (6.5,0);
			\foreach \ii in {(3*\h,0),(7*\h,0),(5*\h,0),(6*\h,0) ,(8*\h,0),(10*\h,0),(9*\h,0),(11*\h,0),(13*\h,0)}
			{
				\draw \ii circle(\pt);
			}
			\foreach \ii in {(4*\h,0),(12*\h,0)}
			{
				\draw[fill] \ii circle(\pt);
			}
			\node at (4*\h,-5*\ee) {$x_{2}(\ttime)$};
			\node at (12*\h,-5*\ee) {$x_{1}(\ttime+1)$};
		    \draw[->, very thick, dashed] (11*\h,.3) to [in=180, out=90] (11.5*\h,.6)
		    to [in=90, out=0] (12*\h,.3);
		    \draw[->, very thick] (4*\h,.3) to [in=180, out=90] (4.5*\h,.6)
		    to [in=90, out=0] (5*\h,.3)
		    node [xshift=0,yshift=20] {$\prob=\tilde\be$};
		\end{tikzpicture}
		\end{adjustbox}
		\end{tabular}
	\end{center}
	\caption{Bernoulli $q$-TASEP.}
	\label{fig:intro_Bernoulliasd}
\end{figure}

Under the geometric $q$-TASEP with parameter $0<\al<1$, at each increment $\ttime{}\to\ttime{}+1$ of the discrete time each particle $x_i$ independently jumps to the right by a random distance having the probability distribution (see \Cref{fig:intro_geometric})
\begin{equation}\label{geometric_jump_distribution_intro}
	\prob\bigl(x_i(\ttime{}+1)=x_i(\ttime{})+j\md\vec x(\ttime)\bigr)=
	\frac{\al^j(\al;q)_{m-j}(q;q)_m}{(q;q)_j(q;q)_{m-j}},
	\qquad m=x_{i-1}(\ttime{})-x_i(\ttime{})-1,\quad
	0\le j\le m.
\end{equation}
Here and below $(z;q)_{k}=(1-z)(1-zq)\ldots(1-zq^{k-1})$, $k\in\Z_{\ge0}$, denotes the $q$-Pochhammer symbol. Since $0<q<1$, it makes sense for $k=+\infty$, too, and we take $m=+\infty$ for the jump of $x_1$. This defines a one-step Markov transition matrix on $L$-particle configurations which we denote by $\mathbf{G}^\circ_\al$. When $q=0$, the jumping distributions under the geometric $q$-TASEP reduce to the usual geometric distributions with parameter $\al$ truncated so that the particles' order is preserved.

Under the Bernoulli $q$-TASEP with parameter $\be>0$, at each increment $\ttime{}\to\ttime{}+1$ of the discrete time each particle can jump by at most one, and the jumps are not independent (the interaction propagates from right to left). Namely, the particle $x_1$ jumps to the right by one with probability $\tilde \be$ or stays put with the complementary probability $1-\tilde \be$. For each $i=2,\ldots,L$, if $x_{i-1}$ jumps (i.e., $x_{i-1}(\ttime{}+1)=x_{i-1}(\ttime{})+1$), then the jumping law of $x_i$ is the same as for $x_1$. If, on the other hand, $x_{i-1}$ stays, then the probability that $x_i$ jumps to the right by one is decreased to $\tilde \be(1-q^{x_{i-1}(\ttime{})-x_i(\ttime{})-1})$. See \Cref{fig:intro_Bernoulliasd}. Denote the corresponding one-step Markov transition matrix by $\mathbf{B}^\circ_\be$.

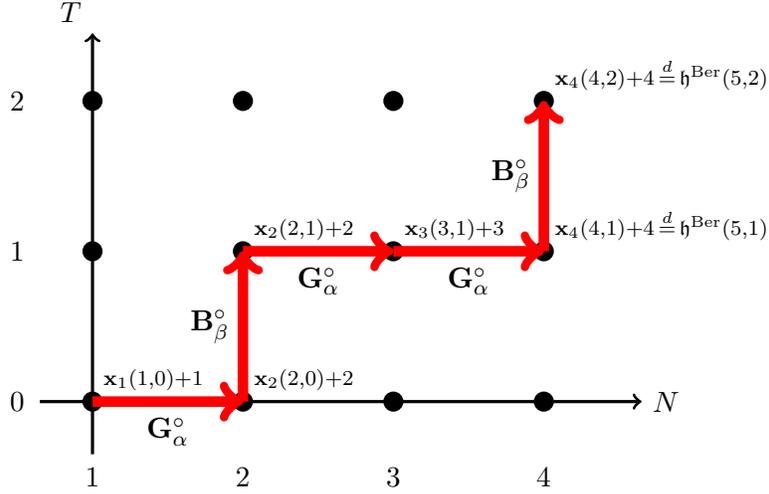
\begin{figure}[htbp]
	\begin{tikzpicture}
		[scale=2,very thick]
		\draw[->] (-.35,0)--++(4,0) node [right] {$N$};
		\draw[->] (0,-.35)--++(0,2.8) node [above left] {$T$};
		\foreach \nn in {1,...,4}
		{\node at (\nn-1,-.5) {$\nn$};}
		\foreach \tt in {0,...,2}
		{\node at (-.5,\tt) {$\tt$};}
		\foreach \nn in {0,...,3}
		{\foreach \tt in {0,...,2}
		{\draw[fill] (\nn,\tt) circle (1.6pt);}}
		\draw[line width=4pt,color=red,->] (0,0)--(1,0);
		\draw[line width=4pt,color=red,->] (1,0)--(1,1);
		\draw[line width=4pt,color=red,->] (1,1)--(2,1);
		\draw[line width=4pt,color=red,->] (2,1)--(3,1);
		\draw[line width=4pt,color=red,->] (3,1)--(3,2);
		\node[above right] at (0,0) {$\scriptstyle\x_1(1,0)+1$};
		\node[above right] at (1,0) {$\scriptstyle\x_2(2,0)+2$};
		\node[above right] at (1,1) {$\scriptstyle\x_2(2,1)+2$};
		\node[above right] at (2,1) {$\scriptstyle\x_3(3,1)+3$};
		\node[above right] at (3,1) {$\scriptstyle\x_4(4,1)+4\,\stackrel{\scriptscriptstyle{d}}{=}\,\HeightFunctionBernoulli(5,1)$};
		\node[above right] at (3,2) {$\scriptstyle\x_4(4,2)+4\,\stackrel{\scriptscriptstyle d}{=}\,\HeightFunctionBernoulli(5,2)$};
		\node at (.5,-.2) {$\mathbf{G}_\al^\circ$};
		\node at (.78,.5) {$\mathbf{B}_\be^\circ$};
		\node at (1.5,1-.2) {$\mathbf{G}_\al^\circ$};
		\node at (2.5,1-.2) {$\mathbf{G}_\al^\circ$};
		\node at (2.78,1.5) {$\mathbf{B}_\be^\circ$};
	\end{tikzpicture}
	\caption{Coupling of the height function $\HeightFunctionBernoulli(N+1,T)$ in the stochastic higher spin six vertex model with the step Bernoulli boundary condition and a mixed geometric/Bernoulli $q$-TASEP $\vec\x(N,T)$ along a path $\mathcal{P}$.}
	\label{fig:capling_intro}
\end{figure}

Fix an up-right path $\mathcal{P}=\{N_\ttime,T_\ttime\}_{\ttime\ge0}$ starting from $(N_0,T_0)=(1,0)$ (this is the red path in \Cref{fig:capling_intro}). With all the above notation, our \textbf{second main result} is the following:

\begin{introtheorem}\label{thm:intro_capling}
	Let parameters $0<\al<1$, $\be>0$, and a path $\mathcal{P}$ be fixed. Then the joint distribution of values of the height function $\HeightFunctionBernoulli(N_\ttime+1,T_\ttime)$ along $\mathcal{P}$ is the same as that of the particle locations $\x_{N_\ttime}(N_\ttime,T_\ttime)+N_\ttime$ in a mixed geometric/Bernoulli $q$-TASEP started from the step initial configuration $\x_{i}(1,0)=-i$, $i\ge1$. Here a horizontal or a vertical part of $\mathcal{P}$ (=~an increment of $N$ or $T$) corresponds to a geometric move $\mathbf{G}_\al^\circ$ or a Bernoulli move $\mathbf{B}_\be^\circ$ in $\vec \x$, respectively. See \Cref{fig:capling_intro} for an illustration.
\end{introtheorem}

Theorem \ref{thm:intro_capling} is a homogeneous specialization of a more general \Cref{thm:capling}. The latter deals with the inhomogeneous stochastic higher spin six vertex model (described in \Cref{sec:stochastic_higher_spin_six_vertex_model}) and $q$-TASEPs with particle-dependent jumping probabilities (\Cref{def:geom,def:Bernoulli}).

Passing from joint distributions along $\mathcal{P}$ to marginal distributions corresponding to an arbitrary point $(N,T)$ we see that Theorem \ref{thm:intro_capling} implies an equality in distribution\footnote{The matrices $\mathbf{B}^\circ_\be$ and $\mathbf{G}^\circ_\al$ commute (which can be viewed as a consequence of the Yang--Baxter equation, see \Cref{prop:commute} for details), so the order of Bernoulli and geometric moves in the definition of a single-time configuration $\vec\x(N,T)$ is irrelevant.} 
\begin{equation}
	\HeightFunctionBernoulli(N+1,T)\stackrel{\scriptscriptstyle d}{=}\x_N(N,T)+N.
	\label{equality_in_distribution}
\end{equation}
Using \cite{MatveevPetrov2014} or \cite{BorodinCorwin2013discrete}, $\x_N(N,T)+N$ can be identified with the last component $\la_N$ under a suitable $q$-Whittaker measure on partitions. By comparing $q$-moments of both sides of \eqref{equality_in_distribution}, one can say that this equality alternatively follows from $q$-difference operator or contour integral considerations discussed in \Cref{sub:bethe_ansatz_and_q_difference_operators}. On the other hand, we prove Theorem \ref{thm:intro_capling} by explicitly constructing a \emph{coupling} between the height function and the mixed geometric/Bernoulli $q$-TASEP along an arbitrary path $\mathcal{P}$. That is, we show that the collection of values of $\HeightFunctionBernoulli{}$ along $\mathcal{P}$ can be interpreted as a function of the trajectory of the mixed $q$-TASEP (depending on $\mathcal{P}$). In this way Theorem \ref{thm:intro_capling} provides an independent probabilistic reason behind \eqref{equality_in_distribution} (and also behind contour integral formulas for $\HeightFunctionBernoulli(N+1,T)$).

\medskip

Joint distributions of the random variables $\x_{N_\ttime}(N_\ttime,T_\ttime)$ as in Theorem \ref{thm:intro_capling} correspond to so-called \emph{time-like paths}. This should be contrasted with \emph{space-like paths} in ($q$-)TASEP-like interacting particle systems in one space dimension which lead to joint distributions of $\{x_{n_i}(t_i)\}_{i=1}^{k}$, where $x_{n_1}(0)\ge \ldots\ge x_{n_k}(0)$ and $t_1\ge \ldots\ge t_k$.\footnote{The terms ``time-like'' and ``space-like'' come from a well-known growth model reformulation of the TASEP and related interacting particle systems, e.g., see \cite{derrida1991dynamics}, \cite{Ferrari_Airy_Survey}.} For $q=0$, joint distributions along space-like paths in the mixed geometric/Bernoulli $q$-TASEP $\{\vec\x(N,T)\}$ described above have a determinantal structure thanks to a connection with Schur processes \cite{BorFerr2008DF} (for $0<q<1$ similar considerations would lead to a connection with $q$-Whittaker processes, but we will not discuss this here). Describing and analyzing joint distributions along time-like paths even in the case $q=0$ is substantially more involved, cf. \cite{johansson2015two}.

Our results produce formulas for certain time-like joint distributions in $q$-TASEPs. Namely, combining contour integral formulas for joint $q$-moments of $\HeightFunctionBernoulli(N_i+1,T)$ for fixed $T$ and arbitrary $\{N_i\}$ obtained in \cite{BorodinPetrov2016inhom} (we recall this result in \Cref{thm:obaservables_Thm9_8}) with Theorem \ref{thm:intro_capling}, one gets joint $q$-moments of the random variables $\{\x_{N_i}(N_i,T)\}$ in a contour integral form. These random variables can be thought of as coming from special time-like paths in the geometric $q$-TASEP started from a random configuration corresponding to $T$ Bernoulli $q$-TASEP moves. Such formulas for joint $q$-moments along general time-like paths presently seem out of reach.

\medskip

We refer to \Cref{sub:more_general_boundary_conditions} for further discussion of extensions of Theorem \ref{thm:intro_capling} to other boundary conditions and to the classical six vertex case (in which there is at most one arrow per edge in both vertical and horizontal directions).

% subsection _q_tasep_vertex_model_coupling (end)

\subsection{Matching with Schur measures and asymptotics of $q$-TASEP} % (fold)
\label{sub:matching_with_schur_measures_and_asymptotics_of_}

Recently Borodin \cite{borodin2016stochastic_MM} established another connection between the inhomogeneous stochastic higher spin six vertex model and Macdonald measures with general parameters $0\le q,t<1$ (as opposed to our results corresponding to $t=0$). Namely, averages of certain single-point observables of the higher spin six vertex model are equal to averages of some other observables with respect to a Macdonald measure (with matching parameters). This identification follows from a direct comparison of contour integral formulas for the two expectations, and algebraic structures behind this fact (as well as relations to our results) remain to be uncovered.

At least in two special cases the Macdonald measure corresponding to a vertex model reduces to a Schur measure (a particular $q=t$ case of the Macdonald measure): for the stochastic six vertex model, and for the higher spin six vertex model as described in \Cref{sub:intro_coupling}, but with special parameter $\al=q$. The identification of expectations in these cases allows to obtain GUE Tracy--Widom asymptotics in vertex models (and related systems) by means of the determinantal structure associated with Schur processes, see \cite[Section 6]{borodin2016stochastic_MM}, \cite[Appendix B]{AmolBorodin2016Phase}, \cite{BO2016_ASEP}.

%Since the identification of expectations only involves single-point observables of the vertex model, the knowledge of multi-point asymptotics of these determinantal processes (leading to the Airy$_2$ point process in the same regime) does not allow to say anything about multi-point asymptotics in vertex models.

Combining results of \cite{borodin2016stochastic_MM} with our Theorem \ref{thm:intro_capling} linking $q$-TASEPs and the stochastic higher spin six vertex model, we see that the mixed geometric/Bernoulli $q$-TASEP with an arbitrary Bernoulli parameter $\be$ and a special parameter $\al=q$ is related (in the sense of matching expectations) to a certain Schur measure. This leads to GUE Tracy--Widom asymptotics for this $q$-TASEP which we formulate and discuss in \Cref{thm:special_q_TASEP_asymptotics}.

\subsection{Outline}
In \Cref{sec:_q_whittaker_measures_and_q_difference_operators} we recall necessary definitions and properties of $q$-Whittaker measures and associated $q$-difference operators. In \Cref{sec:stochastic_higher_spin_six_vertex_model} we recall the inhomogeneous stochastic higher spin six vertex model and contour integral formulas for it. In \Cref{sec:action_of_q_whittaker_difference_operators_on_the_higher_spin_six_vertex_model} we utilize $q$-Whittaker difference operators to provide an alternative proof of these formulas, and establish our results mentioned in \Cref{sub:bethe_ansatz_and_q_difference_operators}. In \Cref{sec:capling_with_q_tasep} we construct a coupling along time-like paths of the higher spin six vertex model having the step Bernoulli boundary condition with a $q$-TASEP started from the step initial configuration, and prove Theorem \ref{thm:intro_capling} (as well as its more general version, \Cref{thm:capling}). The arguments in \Cref{sec:action_of_q_whittaker_difference_operators_on_the_higher_spin_six_vertex_model,sec:capling_with_q_tasep} are fairly independent of each other. In \Cref{sec:asymptotics_of_q_tasep_via_schur_measures} we obtain an asymptotic result for discrete time $q$-TASEPs with special parameters.

\subsection{Acknowledgments} 
We are very grateful to Alexei Borodin for insightful discussions, 
to Alexey Bufetov and Axel Saenz for helpful remarks, 
and to Pavel Etingof for a crucial remark recognizing 
$q$-Whittaker difference operators in the higher spin six vertex model. 
D.O.{} was partially supported by NSF grant DMS-1600653.

% section introduction (end)

\section{$q$-Whittaker measures and difference operators} % (fold)
\label{sec:_q_whittaker_measures_and_q_difference_operators}

In this section we recall the necessary facts about $q$-Whittaker symmetric polynomials and the associated ($q$-)difference operators and probability distributions. We mostly follow \cite[Ch. VI]{Macdonald1995} and \cite[Ch. 2 and 3]{BorodinCorwin2011Macdonald}.

\subsection{Macdonald polynomials} % (fold)
\label{sub:macdonald_polynomials}

Let $\Part_N$ be the set of partitions $\la=(\la_1\ge \ldots\ge\la_N\ge0)$, $\la_i\in\Z$, of length $N\ge0$ ($\Part_0$ consists of the single empty partition), and $\Part=\bigcup_{N=0}^{\infty}\Part_N$ be the set of all partitions. By agreement, $\Part_{N}\subset\Part_{N+1}$, where we append partitions of length $N$ by an additional zero.

The \emph{Macdonald polynomials} \cite{macdonald1988new}, \cite[Ch. VI]{Macdonald1995} form a distinguished family of multivariate symmetric polynomials, and have numerous algebraic properties.
They depend on two parameters $q,t$ which we regard as numbers belonging to $[0,1)$. We adopt the following definition of the Macdonald polynomials.\footnote{There is also an equivalent more abstract definition involving orthogonality with respect to a dot product on the algebra of symmetric functions \cite[Ch. VI.2]{Macdonald1995}.} Let 
\begin{equation}\label{Macdonald_operator}
	\MacdonaldOp_N:=
	\sum_{r=1}^{N}
	\Bigg(\prod_{\substack{1\le i\le N,\,i\ne r}}
	\frac{a_i-ta_r}{a_i-a_r}\Bigg)\ShiftOp_{q;a_r}
\end{equation}
(where $\ShiftOp_{q;x}f(x)=f(qx)$ denotes the $q$-shift in the variable $x$) be the \emph{first Macdonald difference operator} acting on functions in $a_1,\ldots,a_N$. This operator preserves the space of symmetric polynomials in the $a_i$'s, and its eigenfunctions in this space are the Macdonald polynomials (indexed by all $\la\in\Part_N$):
\begin{equation}\label{Macdonald_eigenrelation}
	\MacdonaldOp_N P_\la(a_1,\ldots,a_N\md q,t)=
	(q^{\la_1}t^{N-1}+q^{\la_2}t^{N-2}+
	\ldots
	+q^{\la_{N}})P_\la(a_1,\ldots,a_N\md q,t).
\end{equation}
For generic values of $q,t$ the above eigenvalues are pairwise distinct, so \eqref{Macdonald_eigenrelation} determines the $P_\la$'s up to a constant factor. This factor is fixed by requiring that 
\begin{equation*}
	P_\la(a_1,\ldots,a_N\md q,t)=a_1^{\la_1}a_2^{\la_2}\ldots
	a_N^{\la_N}+\textnormal{lexicographically lower terms},
\end{equation*}
where the lower terms incorporate all dependence on $q,t$.

The Macdonald polynomials are \emph{stable} in the sense that 
\begin{equation*}
	P_{\la\cup 0}(a_1,\ldots,a_N,0\md q,t)
	=P_\la(a_1,\ldots,a_N\md q,t),
\end{equation*}
where $\la\in\Part_N$, and $\la\cup 0$ means appending $\la$ by zero. Thus, they can be regarded as elements $P_\la$ of the algebra of symmetric functions $\mathit{Sym}$ \cite[Ch. I.2]{Macdonald1995} in the variables $\{a_i\}_{i=1}^{\infty}$ (more precisely, the coefficients of these symmetric functions are from $\mathbb{Q}[q,t]$). Let also $Q_\la:=b_\la(q,t)P_\la$ be certain multiples of the Macdonald polynomials, where the numerical constant $b_\la$ is given by \cite[VI.(6.19)]{Macdonald1995}. These $Q_\la$'s are employed in Cauchy summation identities like \eqref{Cauchy_identity} below. Finally, let $P_{\la/\mu}$ and $Q_{\la/\mu}$ denote the corresponding \emph{skew symmetric functions} \cite[Ch. VI.7]{Macdonald1995}.

The family of Macdonald symmetric polynomials includes as subfamilies the \emph{Schur} ($q=t$), \emph{Hall--Littlewood} ($q=0$), and \emph{Jack} ($q=t^{\al}\to1$) symmetric polynomials. Moreover, setting $t=0$ in the Macdonald polynomials leads to the \emph{$q$-Whittaker} symmetric polynomials, which we discuss next.

% subsection macdonald_polynomials (end)

\subsection{$q$-Whittaker polynomials} % (fold) 
\label{sub:_q_whittaker_functions}

Besides being $t=0$ degenerations of the Macdonald polynomials, the $q$-Whittaker polynomials also arise as $q$-deformations of $\mathfrak{gl}_{n}$ Whittaker functions, cf. \cite{GerasimovLebedevOblezin2011}. Most definitions and properties of the $q$-Whittaker polynomials and measures we discuss here and in \Cref{sub:_q_whittaker_measures} below have parallels in the Macdonald case, but for later use it is convenient to list concrete formulas in the $q$-Whittaker situation. We will denote the $q$-Whittaker polynomials simply by $P_\la(a_1,\ldots,a_N)$, omitting the dependence on $q$.

In the $q$-Whittaker case the operator \eqref{Macdonald_operator} and the eigenrelation \eqref{Macdonald_eigenrelation} turn into
\begin{equation}\label{Whittaker_difference_operator}
	\WhittakerOp_N:=\sum_{r=1}^{N}
	\Bigg(\prod_{\substack{1\le i\le N,\,i\ne r}}\frac{a_i}{a_i-a_r}\Bigg)\ShiftOp_{q;a_r}
\end{equation}
and
\begin{equation}\label{Whittaker_difference_operator_eigenrelation}
	\WhittakerOp_N P_\la(a_1,\ldots,a_N)=
	q^{\la_N}P_\la(a_1,\ldots,a_N).
\end{equation}

The functions $\{Q_{(n)}\}_{n=1}^{\infty}$ (where $(n)$ are partitions with only one nonzero part) are algebraically independent and generate the algebra of symmetric functions $\mathit{Sym}$. A \emph{specialization} $\rho$ is an assignment of arbitrary values $Q_{(n)}(\rho)\in\R$ to the functions $Q_{(n)}$, $n=1,2,\ldots$. This is a more general operation than simply specializing the variables $a_i$. A specialization $\rho$ is called (\emph{$q$-Whittaker}) \emph{nonnegative} if $P_{\la/\mu}(\rho)\ge0$ for any $\la,\mu\in\Part$. The classification of $q$-Whittaker nonnegative specializations is only conjectural (see \cite[Section II.9]{Kerov-book}; in fact, Kerov's conjecture also covers the Macdonald case). This conjecture is established in the Jack and the Schur cases, cf. \cite{Kerov1998}, \cite{Kerov-book} and references therein. Nevertheless, this conjectural classification provides a rich supply of $q$-Whittaker nonnegative specializations defined via generating series:
\begin{equation}\label{Pi_definition}
	\sum_{n=0}^{\infty}Q_{(n)}(\rho)\,u^{n}=e^{\gamma u}
	\prod_{i\ge 1}\frac{(1+\be_i u)}{(\al_iu;q)_{\infty}}
	=:\Pi_{\WhittakerMeas}(u;\rho),
\end{equation}
where $\al_i,\be_i\ge0$, $\gamma\ge0$, and $\sum_{i}(\al_i+\be_i)<\infty$. Here $u$ is a formal variable, and $\Pi_{\WhittakerMeas}(u;\rho)$ denotes the right-hand side. Identity \eqref{Pi_definition} holds numerically if $|\al_i u|<\infty$ for all $i$. If $\gamma=0$ and $\be_i=0$ for all $i$, and there are only finitely many $\al_i$'s, then the specialization \eqref{Pi_definition} reduces to plugging in the values $a_i=\al_i$ into the symmetric functions.

The $q$-Whittaker polynomials satisfy a Cauchy summation identity which we can write in the following form \cite[Ch. VI.2]{Macdonald1995}:
\begin{equation}\label{Cauchy_identity}
	\sum_{\la_1\ge\la_2\ge \ldots\ge \la_N\ge0}
	P_\la(a_1,\ldots,a_N)Q_\la(\rho)
	=
	\Pi_{\WhittakerMeas}(a_1;\rho) \ldots\Pi_{\WhittakerMeas}(a_N;\rho)
	:=
	\Pi_{\WhittakerMeas}(a_1,\ldots,a_N;\rho).
\end{equation}
Identity \eqref{Cauchy_identity} holds numerically if $|a_i\al_j|<1$ for all $i,j$ (here the $\al_j$'s come from the specialization $\rho$ as in \eqref{Pi_definition}).

% subsection _q_whittaker_functions (end)

\subsection{$q$-Whittaker measures} % (fold)
\label{sub:_q_whittaker_measures}

Let us take nonnegative $a_1,\ldots,a_N$ and a nonnegative specialization $\rho$ as in \eqref{Pi_definition} such that $|a_i\al_j|<1$ for all $i,j$. Cauchy identity \eqref{Cauchy_identity} allows to define a probability distribution on the set $\Part_N$:
\begin{equation}\label{qWhittaker_measure}
	\WhittakerMeas_{(a_1,\ldots,a_N);\rho}(\la):=
	\frac{P_\la(a_1,\ldots,a_N)Q_\la(\rho)}{\Pi_{\WhittakerMeas}(a_1,\ldots,a_N;\rho)},
	\qquad\la\in\Part_N,
\end{equation}
which is called the \emph{$q$-Whittaker measure}. The nonnegativity of the $a_i$'s and $\rho$ imply that the probability weights \eqref{qWhittaker_measure} are nonnegative for any $\la\in\Part_N$.

\begin{proposition}[{\cite[Proposition 3.1.5]{BorodinCorwin2011Macdonald}}]
	\label{prop:qWhit_moments}
	With the above notation we have
	\begin{multline}\label{qWhit_moments_formula}
		\EE_{\WhittakerMeas_{(a_1,\ldots,a_N);\rho}}
		\bigl[q^{k\la_{N}}\bigr]
		=\frac{(-1)^{k}q^{\frac{k(k-1)}2}}{(2\pi\i)^{k}}
		\oint\ldots\oint \prod_{1\le A<B\le k}\frac{z_A-z_B}{z_A-qz_B}
		\\\times\prod_{j=1}^{k}\biggl[
		\bigg(\prod_{m=1}^{N}\frac{a_m}{a_m-z_j}\biggr)
		\bigg(\prod_{i\ge1}
		(1-\al_iz_j)\frac{1+q\be_iz_j}{1+\be_iz_j}
		\biggr)e^{(q-1)\gamma z_j}
		\frac{dz_j}{z_j}\biggr],
	\end{multline}
	where $\EE_{\WhittakerMeas_{(a_1,\ldots,a_N);\rho}}$ stands for the expectation with respect to the $q$-Whittaker measure \eqref{qWhittaker_measure}. All integration contours are positively oriented and encircle $a_1,\ldots,a_N$, the $z_j$ contour contains $qz_{j+1},\ldots,qz_{k}$, and the contours encircle no other singularities. See \Cref{fig:qWhit_contours}.
\end{proposition}
Note that the integrand in \eqref{qWhit_moments_formula} contains factors of the form $\Pi_{\WhittakerMeas}(qz_j;\rho)/\Pi_{\WhittakerMeas}(z_j;\rho)$. The left-hand side of \eqref{qWhit_moments_formula} will be referred to as the ($k$th) \emph{$q$-moment} of the random variable $\la_N$.
\begin{figure}[htpb]
	\begin{tikzpicture}
		[scale=3]
		\def\pt{0.02}
		\def\q{.6}
		\def\ss{.56}
		\draw[->, thick] (-.8,0) -- (2,0);
	    \draw[->, thick] (0,-.9) -- (0,.9);
	    \draw[fill] (1.3,0) circle (\pt) node [below] {$a_i$};
	    \draw[fill] (1.1,0) circle (\pt);
	  	\draw[fill] (1.4,0) circle (\pt);
	  	\draw[fill] (0,0) circle (\pt) node [below left] {$0$};
	  	\draw (1.25,0) circle (.25) node [below,xshift=-7,yshift=20] {$z_3$};
	  	\draw[densely dashed] (1.25*\q,0) circle (.25*\q);
	  	\draw[densely dashed] (1.25*\q*\q,0) circle (.25*\q*\q);
	  	\draw (1.05,0) ellipse (.48 and .4) node [below,xshift=-36,yshift=34] {$z_2$};
	  	\draw (.92,0) ellipse (.65 and .5) node [below,xshift=-25,yshift=52] {$z_1$};
	\end{tikzpicture}
	\caption{Integration contours in \Cref{prop:qWhit_moments} for 
	$k=3$. The contours $qz_3$ and $q^{2}z_3$ are dashed. Note that this picture requires that $\min|a_i|>q\max|a_i|$, otherwise each contour $z_j$ would consist of several disjoint closed curves. We do not discuss these issues here.}
	\label{fig:qWhit_contours}
\end{figure}
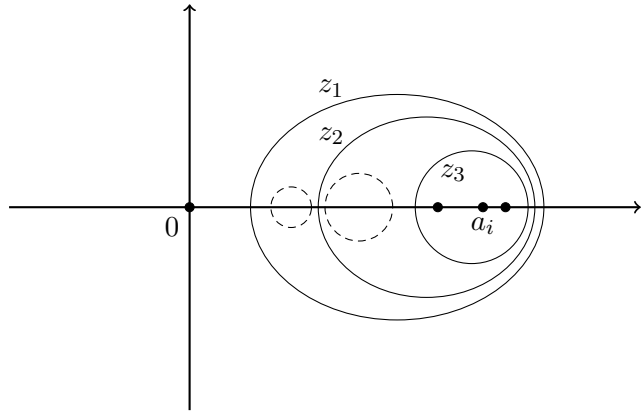
\begin{proof}[Proof of \Cref{prop:qWhit_moments}]
	Let us recall the proof for later use. Formula \eqref{qWhit_moments_formula} is based on two main ingredients. First, applying the operator $\WhittakerOp_N$ in the variables $a_i$ to the Cauchy identity \eqref{Cauchy_identity} $k$ times and dividing by $\Pi_{\WhittakerMeas}(a_1,\ldots,a_N;\rho)$, we see that
	\begin{equation}\label{qWhit_moments_formula_application_of_W}
		\EE_{\WhittakerMeas_{(a_1,\ldots,a_N);\rho}}
		\bigl[q^{k\la_{N}}\bigr]
		=
		\frac{(\WhittakerOp_N)^{k}\,\Pi_{\WhittakerMeas}(a_1,\ldots,a_N;\rho)}{\Pi_{\WhittakerMeas}(a_1,\ldots,a_N;\rho)}
	\end{equation}
	(this follows from the eigenrelation \eqref{Whittaker_difference_operator_eigenrelation}). The second ingredient is that the sum over $r$ in the definition of $\WhittakerOp_N$ \eqref{Whittaker_difference_operator} can be interpreted as a residue expansion, and so the action of $\WhittakerOp_N$ on functions of the product form $f(a_1)\ldots f(a_N)$ can be written as a contour integral
	\begin{equation}\label{qWhit_moments_formula_W_integral}
		\WhittakerOp_N \bigl(f(a_1)\ldots f(a_N)\bigr)=
		-\frac{1}{2\pi\i}\oint
		\bigg(\prod_{i=1}^{N}\frac{a_if(a_i)}{a_i-z}\biggr)
		\frac{f(qz)}{f(z)}\frac{dz}{z}.
	\end{equation}
	Here the $a_i$'s are assumed real, nonzero, and of the same sign, the integration contour encircles all the $a_i$'s, and $f$ is holomorphic and nonzero in a complex neighborhood of an interval of $\R$ containing $\{a_i,qa_i\}_{i=1}^{N}$. The observation that the right-hand side of \eqref{qWhit_moments_formula_W_integral} depends on the $a_i$'s in the same multiplicative way allows to apply \eqref{qWhit_moments_formula_W_integral} repeatedly and write the right-hand side of \eqref{qWhit_moments_formula_application_of_W} in a nested contour integral form.
\end{proof}

Along with the $q$-Whittaker measures one can define more general objects, the (\emph{ascending}) \emph{$q$-Whittaker processes}. These are probability distributions on sequences of partitions $\la^{(j)}\in\Part_j$ which depend on the same data as the $q$-Whittaker measures \eqref{qWhittaker_measure}, and are defined as 
\begin{equation}\label{qWhittaker_process}
	\WhittakerProc_{(a_1,\ldots,a_N);\rho}
	(\la^{(1)},\ldots,\la^{(N)}):=
	\frac{P_{\la^{(1)}}(a_1)P_{\la^{(2)}/\la^{(1)}}(a_2)
	\ldots
	P_{\la^{(N)}/\la^{(N-1)}}(a_N)Q_\la(\rho)}
	{\Pi_{\WhittakerMeas}(a_1,\ldots,a_N;\rho)}.
\end{equation}
The fact that these probabilities sum to one follows from \eqref{Cauchy_identity} and the branching rule for the $q$-Whittaker polynomials \cite[Ch. VI.(7.9')]{Macdonald1995}. The probability \eqref{qWhittaker_process} vanishes unless each pair of consecutive partitions $(\la^{(j)},\la^{(j+1)})$ \emph{interlaces}:
\begin{equation*}
	\la^{(j+1)}_{j+1}\le \la^{(j)}_{j}\le
	\la^{(j+1)}_{j}\le \ldots\le \la^{(j+1)}_{2}
	\le \la^{(j)}_{1}\le \la^{(j+1)}_{1}.
\end{equation*}
The marginal distribution of each single partition $\la^{(j)}$ under \eqref{qWhittaker_process} is given by the $q$-Whittaker measure $\WhittakerMeas_{(a_1,\ldots,a_j);\rho}$.

The joint $q$-moments of $\la^{(j)}_j$ under the distribution \eqref{qWhittaker_process} can be expressed as nested contour integrals similarly to \Cref{prop:qWhit_moments}. Namely, we have
\begin{proposition}[{\cite{BCGS2013}}]\label{prop:qWhit_process_moments}
	For any $k\ge1$ and $N\ge N_1\ge \ldots\ge N_k\ge1$ the expectation
	\begin{equation}\label{qWhit_process_moments_formula}
		\EE_{\WhittakerProc_{(a_1,\ldots,a_N);\rho}}
		\bigg[\prod_{j=1}^{k}q^{\la^{(N_j)}_{N_j}}\biggr]
	\end{equation}
	is given by the right-hand side of \eqref{qWhit_moments_formula} with the factor $\prod_{m=1}^{N}\frac{a_m}{a_m-z_j}$ replaced by 
	$\prod_{m=1}^{N_j}\frac{a_m}{a_m-z_j}$, $j=1,\ldots,N$.
\end{proposition}
\begin{proof}[Idea of proof]
	A key observation (first employed in \cite{BCGS2013}) is that with the help of \eqref{Whittaker_difference_operator_eigenrelation}, the expectation \eqref{qWhit_process_moments_formula} can be written as
	\begin{equation}\label{qWhit_process_moments_formula_proof}
		\EE_{\WhittakerProc_{(a_1,\ldots,a_N);\rho}}
		\bigg[\prod_{j=1}^{k}q^{\la^{(N_j)}_{N_j}}\biggr]=
		\frac{\WhittakerOp_{N_k}\ldots \WhittakerOp_{N_1}\Pi_{\WhittakerMeas}(a_1,\ldots,a_N;\rho)}{\Pi_{\WhittakerMeas}(a_1,\ldots,a_N;\rho)}.
	\end{equation}
	Note that the operators $\WhittakerOp_{N_i}$ and $\WhittakerOp_{N_j}$ do not commute for $N_i\ne N_j$, so the order of their application is important (and we first apply the operator in the largest number of variables). The right-hand side of \eqref{qWhit_process_moments_formula_proof} can be turned into a nested contour integral similarly to the proof of \Cref{prop:qWhit_moments}.
\end{proof}

We will not directly employ \Cref{prop:qWhit_process_moments} in connection with the stochastic higher spin six vertex model, but in \Cref{thm:action} below we encounter formulas similar to the right-hand side of \eqref{qWhit_process_moments_formula_proof} leading to contour integral formulas for averages of joint observables with respect to the higher spin six vertex model.

An alternative way to prove both \Cref{prop:qWhit_moments,prop:qWhit_process_moments} is by relating the $q$-Whittaker measures and processes to $q$-TASEPs, and utilizing a Markov duality approach for the latter. We outline these ideas and provide references in \Cref{sub:discrete_time_q_taseps} below.

% subsection _q_whittaker_measures (end)

\subsection{Remark. Higher order operators} % (fold)
\label{sub:higher_order_operators}

There are higher order difference operators $\MacdonaldOp_N^{(\ell)}$, $\ell\ge2$, commuting with the first Macdonald operator $\MacdonaldOp_N$ \eqref{Macdonald_operator}. When $t=0$, they turn into the operators $\WhittakerOp_{N}^{(\ell)}$ commuting with $\WhittakerOp_N$. The eigenvalues of $\MacdonaldOp_N^{(\ell)}$ and $\WhittakerOp_{N}^{(\ell)}$ on Macdonald and $q$-Whittaker polynomials are, respectively, $e_\ell(q^{\la_1}t^{N-1},\ldots,q^{\la_N})$ and $q^{\la_N+\la_{N-1}+\ldots+\la_{N-\ell+1}}$ (eigenrelations \eqref{Macdonald_eigenrelation} and \eqref{Whittaker_difference_operator_eigenrelation} correspond to $\ell=1$). Here $e_\ell$ is the $\ell$th elementary symmetric polynomial. The action of these operators can also be written as contour integrals, and so they can be utilized to compute averages of the corresponding observables (see Sections 2.2.3 and 3.1.3 in \cite{BorodinCorwin2011Macdonald} for formulas in the Macdonald and $q$-Whittaker cases, respectively). 

The higher order operators are useful in some probabilistic applications of Macdonald measures and processes (for example, see \cite{BorodinGorin2013beta}), but for our purposes the first order operators suffice. 

In \cite{borodin2016stochastic_MM} an expectation of $e_\ell(q^{\la_1}t^{N-1},\ldots,q^{\la_N})$ under a Macdonald measure with certain parameters was identified with an expectation coming from the inhomogeneous stochastic higher spin six vertex model (see Theorem 4.2 in that paper). It is not immediately clear how that result is related to ours, and we do not address this question here.

% subsection higher_order_operators (end)

% section _q_whittaker_measures_and_q_difference_operators (end)

\section{Stochastic higher spin six vertex model} % (fold)
\label{sec:stochastic_higher_spin_six_vertex_model}

Here we recall the necessary definitions and properties pertaining to the inhomogeneous stochastic higher spin six vertex model in the quadrant. This section mostly follows \cite{BorodinPetrov2016inhom}.

\subsection{Vertex weights and higher spin six vertex model} % (fold)
\label{sub:vertex_weights}

The essential ingredient in the definition of the higher spin six vertex model is the collection of vertex weights $\LLL(i_1,j_1;i_2,j_2)$, $i_1,i_2\in\Z_{\ge0}$, $j_1,j_2\in\{0,1\}$, assigned to each vertex $(N,T)$ of the quadrant $\Z_{\ge1}^{2}$ in the square grid. We interpret $i_1$ and $j_1$ as the numbers of arrows entering the vertex, respectively, from below and from the left, and $i_2$ and $j_2$ as the numbers of arrows leaving the vertex, respectively, upwards and to the right. We impose the arrow preservation property at each vertex: the weights $\LLL(i_1,j_1;i_2,j_2)$ must vanish unless $i_1+j_1=i_2+j_2$ (i.e., the number of outgoing arrows is the same as the number of incoming ones). We consider only stochastic vertex weights, i.e., those which satisfy
\begin{equation}\label{L_stochasticity_condition}
	\sum_{i_2,j_2\in\Z_{\ge0}\colon i_2+j_2=i_1+j_1}
	\LLL(i_1,j_1;i_2,j_2)=1.
\end{equation}
If, in addition, all $\LLL(i_1,j_1;i_2,j_2)\ge0$, then we interpret $\LLL(i_1,j_1;i_2,j_2)$ as a (conditional) probability that there are $i_2$ and $j_2$ arrows leaving the vertex given that there are $i_1$ and $j_1$ arrows entering the vertex.

\begin{figure}[htbp]
	\begin{tabular}{c|c|c|c|c}
	&\vertexoo{1}
	&\vertexol{1}
	&\vertexll{1}
	&\vertexlo{1}
	\\
	\hline\rule{0pt}{20pt}
	$\LLL_{u,a,\nu}$&
	$\dfrac{1-au q^{g}}{1-au}$&
	$\dfrac{-au(1-q^{g})}{1-au}$&
	$\dfrac{\nu q^{g}-au}{1-au}$&
	$\dfrac{1-\nu q^{g}}{1-au}$
	\phantom{\Bigg|}
	\end{tabular}
	\caption{Vertex weights $\LLL_{u,a,\nu}(i_1,j_1;i_2,j_2)$, where $g\in\Z_{\ge0}$ is arbitrary. Note that the weight automatically vanishes at the forbidden configuration $(0,0;-1,1)$.}
	\label{fig:vertex_weights_stoch}
\end{figure}

The concrete expressions for the vertex weights $\LLL=\LLL_{u,a,\nu}$ are given in the table in \Cref{fig:vertex_weights_stoch}. Their parameters $(u,a,\nu)=(u_T,a_N,\nu_N)$ in turn depend on the vertex location $(N,T)\in\Z_{\ge1}^{2}$, while the main parameter $q\in(0,1)$ is assumed fixed throughout the paper. The weights $\LLL_{u,a,\nu}$ always satisfy \eqref{L_stochasticity_condition}, and they are all nonnegative if, say, $u<0$, $a>0$, and $0<\nu<1$. The parameter $u$ is called the \emph{spectral parameter}.

We are now in a position to define the inhomogeneous stochastic higher spin six vertex model in the quadrant $\Z_{\ge1}^{2}$ with parameters
\begin{equation}\label{vertex_model_parameters}
	u_1,u_2,\ldots\in(-\infty,0),
	\qquad
	a_1,a_2,\ldots\in(0,+\infty),
	\qquad
	\nu_1,\nu_2,\ldots\in(0,1),
\end{equation}
such that the $a_i$'s and the $\nu_j$'s are uniformly bounded away from the endpoints of the intervals they belong to. These conditions are assumed to hold throughout the paper.

The higher spin six vertex model is a probability distribution on the set of infinite oriented up-right paths drawn in $\Z_{\ge 1}^2$, with all the paths starting from a left-to-right arrow entering at each of the points $\{(1,m)\colon m\in\Z_{\ge 1}\}$ on the left boundary (no path enters through the bottom boundary). No two paths share any horizontal piece, but common vertices and vertical pieces are allowed. The probability distribution on this set of paths can be constructed in a Markovian way. Assume that we have already defined the configuration inside the triangle $\{(N,T)\colon N+T\le n\}$, where $n\ge2$. For each vertex $(N,T)$ with $N+T=n$, we know the number of incoming arrows (from below and from the left) into this vertex. Sample, independently for each such vertex, the number of outgoing arrows according to the stochastic vertex weights $\LLL_{u_T,a_N,\nu_N}$. In this way the path configuration is now defined inside the larger triangle $\{(N,T)\colon N+T\le n+1\}$, and we can continue inductively. See \Cref{fig:HS}. Since we are using different parameters for the vertex weights at each vertex $(N,T)$, the model is inhomogeneous in both spatial directions.

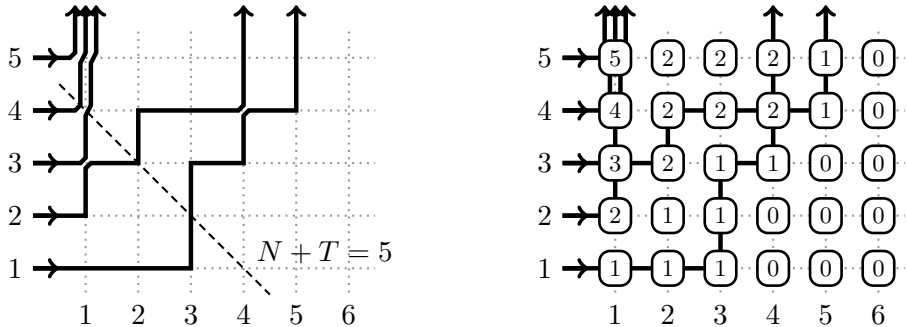
\begin{figure}[htb]
	\begin{tikzpicture}
		[scale=.7,thick]
		\def\d{.1}
		\foreach \yyyy in {1,...,6}
		{
		\draw[dotted, opacity=.4] (\yyyy-1,5.5)--++(0,-5);
		\node[below] at (\yyyy-1,.5) {$\yyyy$};
		}
		\foreach \yyyy in {1,2,3,4,5}
		{
		\draw[dotted, opacity=.4] (0,\yyyy)--++(5.5,0);
		\node[left] at (-1,\yyyy) {$\yyyy$};
		\draw[->, line width=1.7pt] (-1,\yyyy)--++(.5,0);
		}
		\draw[->, line width=1.7pt] (-1,5)--++(1-3*\d,0)--++(\d,\d)--++(0,1-\d);
		\draw[->, line width=1.7pt] (-1,4)--++(1-2*\d,0)--++(\d,\d)--++(0,1-2*\d)--++(\d,2*\d)--++(0,1-\d);
		\draw[->, line width=1.7pt] (-1,3)--++(1-\d,0)--++(\d,\d)--++(0,1-2*\d)--++(\d,2*\d)--++(0,1-2*\d)--++(\d,2*\d)--++(0,1-\d);
		\draw[->, line width=1.7pt] (-1,2)--++(1,0)--++(0,1-\d)--++(\d,\d)--++(1-\d,0)
		--++(0,1)--++(2-\d,0)--++(\d,\d)--++(0,2-\d);
		\draw[->, line width=1.7pt]
		(-1,1)--++(3,0)--++(0,2)--++(1,0)--++(0,1-\d)--++(\d,\d)--++(1-\d,0)
		--++(0,2);
		\draw[densely dashed] (-.5,4.5)--++(4,-4) node[above,anchor=west,yshift=16,xshift=-9] {$N+T=5$};
	\end{tikzpicture}
	\qquad \qquad
	\begin{tikzpicture}
		[scale=.7,thick]
		\tikzstyle{myblock} = [draw=black, fill=white, line width=1, minimum width=1em, minimum height=1em, rectangle, rounded corners, text centered]
		\def\d{.1}
		\foreach \yyyy in {1,...,6}
		{
		\draw[dotted, opacity=.4] (\yyyy-1,5.5)--++(0,-5);
		\node[below] at (\yyyy-1,.5) {$\yyyy$};
		}
		\foreach \yyyy in {1,2,3,4,5}
		{
		\draw[dotted, opacity=.4] (0,\yyyy)--++(5.5,0);
		\node[left] at (-1,\yyyy) {$\yyyy$};
		\draw[->, line width=1.7pt] (-1,\yyyy)--++(.5,0);
		}
		\draw[->, line width=1.7pt] (-1,5)--++(1-3*\d,0)--++(\d,\d)--++(0,1-\d);
		\draw[->, line width=1.7pt] (-1,4)--++(1-2*\d,0)--++(\d,\d)--++(0,1-2*\d)--++(\d,2*\d)--++(0,1-\d);
		\draw[->, line width=1.7pt] (-1,3)--++(1-\d,0)--++(\d,\d)--++(0,1-2*\d)--++(\d,2*\d)--++(0,1-2*\d)--++(\d,2*\d)--++(0,1-\d);
		\draw[->, line width=1.7pt] (-1,2)--++(1,0)--++(0,1-\d)--++(\d,\d)--++(1-\d,0)
		--++(0,1)--++(2-\d,0)--++(\d,\d)--++(0,2-\d);
		\draw[->, line width=1.7pt]
		(-1,1)--++(3,0)--++(0,2)--++(1,0)--++(0,1-\d)--++(\d,\d)--++(1-\d,0)
		--++(0,2);
		\node[myblock] at (0,1) {\footnotesize{}1};
		\node[myblock] at (1,1) {\footnotesize{}1};
		\node[myblock] at (2,1) {\footnotesize{}1};
		\node[myblock] at (3,1) {\footnotesize{}0};
		\node[myblock] at (4,1) {\footnotesize{}0};
		\node[myblock] at (5,1) {\footnotesize{}0};
		\node[myblock] at (0,2) {\footnotesize{}2};
		\node[myblock] at (1,2) {\footnotesize{}1};
		\node[myblock] at (2,2) {\footnotesize{}1};
		\node[myblock] at (3,2) {\footnotesize{}0};
		\node[myblock] at (4,2) {\footnotesize{}0};
		\node[myblock] at (5,2) {\footnotesize{}0};
		\node[myblock] at (0,3) {\footnotesize{}3};
		\node[myblock] at (1,3) {\footnotesize{}2};
		\node[myblock] at (2,3) {\footnotesize{}1};
		\node[myblock] at (3,3) {\footnotesize{}1};
		\node[myblock] at (4,3) {\footnotesize{}0};
		\node[myblock] at (5,3) {\footnotesize{}0};
		\node[myblock] at (0,4) {\footnotesize{}4};
		\node[myblock] at (1,4) {\footnotesize{}2};
		\node[myblock] at (2,4) {\footnotesize{}2};
		\node[myblock] at (3,4) {\footnotesize{}2};
		\node[myblock] at (4,4) {\footnotesize{}1};
		\node[myblock] at (5,4) {\footnotesize{}0};
		\node[myblock] at (0,5) {\footnotesize{}5};
		\node[myblock] at (1,5) {\footnotesize{}2};
		\node[myblock] at (2,5) {\footnotesize{}2};
		\node[myblock] at (3,5) {\footnotesize{}2};
		\node[myblock] at (4,5) {\footnotesize{}1};
		\node[myblock] at (5,5) {\footnotesize{}0};
	\end{tikzpicture}
	\caption{Stochastic higher spin six vertex model in $\Z_{\ge1}^{2}$. The paths crossing the horizontal line at $4+\frac12$ are encoded by the partition $\mu^{(4)}=(5,4,1,1)$. The corresponding height function \eqref{height_function_def} at each vertex is given on the right.}
	\label{fig:HS}
\end{figure}

\begin{lemma}\label{lemma:convergence_conditions_on_path_conf}
	For each $T=1,2,\ldots$, almost surely there are exactly $T$ paths crossing the horizontal line at $T+\frac12$. In other words, paths cannot stay horizontal forever. 
\end{lemma}
\begin{proof}
	The probability $\LLL_{u_T,a_N,\nu_N}(0,1;0,1)=(\nu_N-a_Nu_T)/(1-a_Nu_T)$ that a path goes right is uniformly bounded away from $1$ since the $a_i$'s and the $\nu_i$'s are uniformly bounded from the endpoints of their corresponding intervals. Thus, an infinite product (over $N$) of $\LLL_{u_T,a_N,\nu_N}(0,1;0,1)$ is zero, which means that a path almost surely cannot stay horizontal forever.
\end{proof}

\Cref{lemma:convergence_conditions_on_path_conf} implies that the configuration of the stochastic higher spin six vertex model can be encoded by a sequence of partitions $\{\mu^{(T)}\}_{T\ge1}$, where $\mu^{(T)}\in\Partt_T:=\{\mu\in\Part_T\colon \mu_T\ge1\}$. Here $\mu^{(T)}_1\ge \mu^{(T)}_2\ge \ldots\ge\mu^{(T)}_T\ge1$ are $T$ intersections of the up-right paths with the horizontal line at $T+\frac12$, cf. \Cref{fig:HS}. 
\begin{remark}
	The transition from $\mu^{(T)}\in\Partt_{T}$ to $\mu^{(T+1)}\in\Partt_{T+1}$ can be interpreted as one move of a Markov chain with left-to-right update, see \cite[Section 6.4.2]{BorodinPetrov2016inhom}.
\end{remark}

\begin{remark}\label{rmk:matching_with_S_XI_parameters}
	The parameters $(u,a,\nu)$ we use here are related to the ones in \cite{BorodinPetrov2016inhom} as $a=-\upxi\SP$, $\nu=\SP^{2}$, plus a change of sign of the parameters $u_i$. Our choice here turns out to be convenient in connection with difference operators and $q$-Whittaker measures. 
\end{remark}

Conditions \eqref{vertex_model_parameters} ensuring the nonnegativity of the vertex weights are sufficient but not necessary. Other values of parameters can also lead to nonnegative weights (cf. \cite[Proposition 2.3]{CorwinPetrov2015}, where $a$ is denoted by $\al$). Most notably, setting $\nu=1/q$ leads to the classical stochastic six vertex model which was introduced in \cite{GwaSpohn1992} and whose asymptotic behavior was studied in \cite{BCG6V} (see also \cite{AmolBorodin2016Phase}, \cite{Amol2016Stationary}, \cite{borodin2016stochastic_MM}). In this case the number of vertical arrows per one edge of the square grid is at most one. In the sequel the parameters are always assumed to satisfy conditions \eqref{vertex_model_parameters}.

% subsection vertex_weights (end)

\subsection{Step and step Bernoulli boundary conditions} % (fold)
\label{sub:step_and_bernoulli_boundary_conditions}

The boundary condition in the stochastic higher spin six vertex model described in \Cref{sub:vertex_weights} is that the paths start at every edge on the left boundary of the quadrant, and no arrows are incoming at the bottom boundary. This boundary condition will be referred to as the \emph{step boundary condition}.

We also consider the \emph{step Bernoulli boundary condition} obtained by specializing $\nu_1=0$. Under this condition still no arrows are incoming from below, and independently at each edge from $(1,T)$ to $(2,T)$ on the left boundary there is a right-pointing arrow (starting a path) with probability $-a_1u_T/(1-a_1u_T)\in(0,1)$ or no arrow with the complementary probability $1/(1-a_1u_T)$. The description of this specialization readily follows by considering the vertex weights with $\nu=0$.

The higher spin six vertex model with the step boundary condition in $\Z_{\ge1}^{2}$ thus gives rise to the model with the step Bernoulli boundary condition in $\Z_{\ge2}\times\Z_{\ge1}$. It is convenient not to shift the horizontal axis, and keep the latter model in the quadrant $\Z_{\ge2}\times\Z_{\ge1}$.

Another way to obtain the step Bernoulli boundary condition from the step boundary condition is to set $a_1=-\nu_1$, specialize the first $K$ parameters $u_i$ into a suitable geometric progression with ratio $q$, and take $K\to+\infty$. See \cite[Sections 6.6.2 and 10.2]{BorodinPetrov2016inhom}.

A generalization of the step Bernoulli boundary condition (for the stochastic six vertex model, i.e., with $\nu_j\equiv 1/q$) corresponding to setting the first several parameters $\nu_i$ to zero was introduced and studied recently in \cite{AmolBorodin2016Phase}. We briefly discuss this boundary condition in \Cref{sub:more_general_boundary_conditions} below.

% subsection step_and_bernoulli_boundary_conditions (end)

\subsection{$q$-moments of the height function} % (fold)
\label{sub:_q_moments_of_the_height_function}

The vertex weights described in \Cref{sub:vertex_weights} have a very special property which justifies their definition: they satisfy (a version of) the Yang--Baxter equation. We do not reproduce it here and refer to \cite{Mangazeev2014}, \cite{Borodin2014vertex}, \cite{CorwinPetrov2015}, \cite{BorodinPetrov2016_Hom_Lectures}, \cite{BorodinPetrov2016inhom} for details in the context of $U_q(\widehat{\mathfrak{sl}_2})$ vertex models, and to \cite{baxter2007exactly} for a general background. The Yang--Baxter equation is a key tool used in \cite{BorodinPetrov2016inhom} to compute averages of certain observables of the higher spin six vertex model in a contour integral form. Here we recall one such formula for the $q$-moments of the height function.

The \emph{height function} $\HeightFunction(N,T)$, $(N,T)\in\Z_{\ge1}^{2}$, of the stochastic higher spin six vertex model is defined as 
\begin{equation}\label{height_function_def}
	\begin{split}
		\HeightFunction(N,T)
		&:=
		\#
		\{
		\textnormal{up-right paths crossing
		the horizontal line at $T+\tfrac12$ at points $\ge N$}
		\}
		\\&\phantom{:}=\#\{i\colon \mu^{(T)}_i\ge N\},
	\end{split}
\end{equation}
where $\mu^{(T)}$ is defined after \Cref{lemma:convergence_conditions_on_path_conf}. In particular, $\HeightFunction(+\infty,T)=0$, and for the step boundary condition we have $\HeightFunction(1,T)=T$. For fixed $T$, $\HeightFunction(N,T)$ is a nonincreasing function of $N$. Since the configuration of up-right paths in the quadrant is random, $\{\HeightFunction(N,T)\}$ is a collection of random variables indexed by points of the quadrant. See \Cref{fig:HS}, right, for an illustration.

Assume that in addition to \eqref{vertex_model_parameters} our parameters satisfy
\begin{equation}\label{7.4_conditions}
	\sup_{i\ge1}\Bigl\{\frac{\nu_i}{a_i}\Bigr\}<\inf_{i\ge1}
	\Bigl\{\frac{\nu_i}{qa_i},\frac{1}{a_i}\Bigr\},
	\qquad
	\textnormal{$u_i\ne q u_j$ for any $i,j=1,\ldots,T$}.
\end{equation}
\begin{theorem}[{\cite[Theorem 9.8]{BorodinPetrov2016inhom}}]
	\label{thm:obaservables_Thm9_8}
	Under assumptions \eqref{vertex_model_parameters} and \eqref{7.4_conditions},  for any $\ell\ge1$ and $N_1\ge \ldots N_\ell\ge 0$ we have
	\begin{multline}\label{obaservables_Thm9_8}
		\EE_{\mathbf{HS}}^{\textnormal{step}}
		\bigg[\prod_{i=1}^{\ell}q^{\HeightFunction(N_i+1,T)}\biggr]=
		q^{\frac{\ell(\ell-1)}2}
		\oint\limits_{\cont[\mathbf{u}^{-1}\md1]}\frac{d w_1}{2\pi\i}
		\ldots
		\oint\limits_{\cont[\mathbf{u}^{-1}\md\ell]}
		\frac{d w_\ell}{2\pi\i}
		\prod_{1\le \aind<\bind\le \ell}
		\frac{w_\aind-w_\bind}{w_\aind-qw_\bind}
		\\\times
		\prod_{i=1}^{\ell}\bigg(
		w_i^{-1}
		\prod_{j=1}^{N_i}
		\frac{a_j-\nu_jw_i}{a_j-w_i}
		\prod_{j=1}^{T}\frac{1-qu_jw_i}{1-u_jw_i}
		\bigg),
	\end{multline}
	where $\EE_{\mathbf{HS}}^{\textnormal{step}}$ stands for the expectation with respect to the higher spin six vertex model with the step boundary condition. The integration contours encircle all points of $\mathbf{u}^{-1}:=\{u_1^{-1},\ldots,u_T^{-1}\}$ and $0$, but not $q^{\pm k}u_i$, $k\ne0$, $a_i$, or $a_i/\nu_i$. Moreover, parts of the contours are nested around zero in such a way that shrinking them to zero in the order $w_1,\ldots,w_\ell$ does not produce any residues at $w_\aind=q w_\bind$, $\aind<\bind$. See \Cref{fig:zero_thing_contours}.
\end{theorem}

\begin{figure}[htbp]
	\begin{center}
	\begin{tikzpicture}
		[scale=2.9]
		\def\pt{0.02}
		\def\q{.7}
		\def\ss{.56}
		\draw[->, thick] (-2.2,0) -- (2.6,0);
	  	\draw[->, thick] (0,-1) -- (0,1);
	  	\draw[fill] (.9,0) circle (\pt);
	  	\draw[fill] (1,0) circle (\pt) node [below, xshift=7pt]
	  	{$a_i\phantom{/\nu_i}$};
	  	\draw[fill] (1.05,0) circle (\pt) ;
	  	\draw[fill] (.9/\ss,0) circle (\pt);
	  	\draw[fill] (1/\ss,0) circle (\pt);
	  	\draw[fill] (-1.5,0) circle (\pt) node [below, xshift=-7pt] {$u_j^{-1}$};
	  	\draw[fill] (-1.7,0) circle (\pt) ;
	  	\draw[fill] (-1.4,0) circle (\pt) ;
	  	\draw[fill] (1.05/\ss,0) circle (\pt) node [below, xshift=4pt]
	  	{$a_i/\nu_i$};
	  	\draw[fill] (0,0) circle (\pt) node [below left] {$0$};
	  	\draw (0,0) circle (.23) node [xshift=9,yshift=8] {$rc_0$};
	  	\def\rrrrr{1.9}
	  	\draw (0,0) circle (.23*\rrrrr) node [xshift=20,yshift=39] {$r^{2}c_0$};
	  	\draw[densely dashed] (0,0) circle (.24*\rrrrr*\q);
	  	\draw (0,0) circle (.23*\rrrrr*\rrrrr) node [xshift=53,yshift=61] {$r^{3}c_0$};
	  	\draw[densely dashed] (0,0) circle (.24*\rrrrr*\rrrrr*\q);
	  	\draw (-1.6,0) circle (.26) node [below,xshift=-10,yshift=40]
	  	{$\cont[\mathbf{u}^{-1}]$};
	  	\draw[densely dashed] (-1.6*\q,0) circle (.26*\q);
	\end{tikzpicture}
	\end{center}
  	\caption{The contours $\cont[\mathbf{u}^{-1}\md1]=\cont[\mathbf{u}^{-1}]\cup rc_0$, $\cont[\mathbf{u}^{-1}\md2]=\cont[\mathbf{u}^{-1}]\cup r^{2}c_0$, and $\cont[\mathbf{u}^{-1}\md3]=\cont[\mathbf{u}^{-1}]\cup r^{3}c_0$ for $\ell=3$ in \Cref{thm:obaservables_Thm9_8}. Here $c_0$ is a small circle around zero, $r>q^{-1}$, and $\cont[\mathbf{u}^{-1}]$ is a contour encircling all the $u_j^{-1}$'s. Contours $q\cont[\mathbf{u}^{-1}]$, $q\cont[\mathbf{u}^{-1}\md2]$, and $q\cont[\mathbf{u}^{-1}\md3]$ are shown dashed.}
  	\label{fig:zero_thing_contours}
\end{figure}

Expectations in the left-hand side of \eqref{obaservables_Thm9_8} will be referred to as the (\emph{joint}) \emph{$q$-moments of the height function} of the higher spin six vertex model. In \Cref{sec:action_of_q_whittaker_difference_operators_on_the_higher_spin_six_vertex_model,sec:capling_with_q_tasep} we give two new independent proofs of \Cref{thm:obaservables_Thm9_8} (the second one in a particular case of the step Bernoulli boundary condition and single-point $q$-moments).

% subsection _q_moments_of_the_height_function (end)

\subsection{Horizontal probability distributions} % (fold)
\label{sub:horizontal_probability_distributions}

The distribution of the random partition $\mu^{(T)}\in\Partt_T$ describing the stochastic higher spin six vertex model at the $T$th horizontal can be written down explicitly. It is also based on the Yang--Baxter equation, and can be viewed as an instance of turning algebraic Bethe ansatz expressions for eigenfunctions of higher spin transfer matrices into coordinate form. We refer to \cite[Section 4.5]{BorodinPetrov2016inhom} for further details. The probabilistic interpretation in the following proposition is based on \cite[Section 6.4]{BorodinPetrov2016inhom}:
\begin{proposition}\label{prop:probabilities_symmetrization_formula}
	Under assumptions \eqref{vertex_model_parameters}, for any $T\ge1$ the distribution of the random partition $\mu^{(T)}\in\Partt_T$ corresponding to the higher spin six vertex model with the step boundary condition is given by 
	\begin{equation*}
		\prob(\mu^{(T)}=\ka)=
		\FProb_\ka(u_1,\ldots,u_T\md\mathbf{a},\boldsymbol\nu),
		\qquad\ka\in\Partt_T
		,
	\end{equation*}
	where
	\begin{equation}\label{F_probabilistic_symmetrization_formula}
		\FProb_\ka(u_1,\ldots,u_T\md\mathbf{a},\boldsymbol\nu):=
		\prod_{r\ge1}\frac{(\nu_r;q)_{k_r}}{(q;q)_{k_r}}
		\sum_{\sigma\in S_T}\sigma\bigg(
		\prod_{1\le \aind<\bind\le T}\frac{u_{\aind}-qu_{\bind}}{u_{\aind}-u_{\bind}}
		\prod_{i=1}^{T}\frac{1-q}{1-a_{\ka_i}u_{i}}
		\prod_{j=1}^{\ka_i-1}\frac{\nu_j-a_ju_i}{1-a_ju_i}
		\bigg).
	\end{equation}
	Here the partition $\ka$ is written in the multiplicative notation $\ka=(1^{k_1}2^{k_2}\ldots)$ (i.e., it has $k_1\ge0$ parts equal to $1$, $k_2\ge0$ parts equal to $2$, etc.), and the permutation $\sigma$ acts by permuting $u_1,\ldots,u_T$ and not $\ka_1,\ldots,\ka_T$.
\end{proposition}
In particular, this implies that for the quantities given by \eqref{F_probabilistic_symmetrization_formula} we have
\begin{equation}\label{F_stoch_sum_to_one}
	\sum\nolimits_{\ka\in\Partt_T}
	\FProb_\ka(u_1,\ldots,u_T\md\mathbf{a},\boldsymbol\nu)
	=1
\end{equation}
(conditions \eqref{vertex_model_parameters} ensure the convergence of the series). This identity can be viewed as a specialization of a Cauchy-type summation formula involving symmetric rational functions $\FProb_\ka$. The functions \eqref{F_probabilistic_symmetrization_formula} and this Cauchy-type summation identity are central to the proof of \Cref{thm:obaservables_Thm9_8} given in \cite{BorodinPetrov2016inhom}. We will use \Cref{prop:probabilities_symmetrization_formula} in our computations in \Cref{sec:action_of_q_whittaker_difference_operators_on_the_higher_spin_six_vertex_model}.

\begin{remark}\label{rmk:HL_connection}
	To avoid confusion, for this remark only rename the parameter $q$ of the higher spin six vertex model by $Q$. 

	In a limit of the parameters $a_i,\nu_i$ which amounts to setting the $\SP_i$'s (see the end of \Cref{sub:vertex_weights}) to zero plus a straightforward rescaling, the functions \eqref{F_probabilistic_symmetrization_formula} become the Hall--Littlewood symmetric polynomials in the $u_i$'s. The latter are $q=0$ degenerations of the Macdonald polynomials depending on the second Macdonald parameter $t$. This parameter $t$ in the Hall--Littlewood polynomials coming out of \eqref{F_probabilistic_symmetrization_formula} should be taken equal to $Q$.
	% In \Cref{sec:action_of_q_whittaker_difference_operators_on_the_higher_spin_six_vertex_model} we will see that the action of the $Q$-Whittaker difference operators (corresponding to another degeneration of the Macdonald objects) in the parameters $a_i$ on combinations of probabilities \eqref{F_probabilistic_symmetrization_formula} is nice.
\end{remark}

% subsection horizontal_probability_distributions (end)

% section stochastic_higher_spin_six_vertex_model (end)

\section{Action of $q$-Whittaker difference operators} % (fold)
\label{sec:action_of_q_whittaker_difference_operators_on_the_higher_spin_six_vertex_model}

In this section we show that the action of (conjugations of) the $q$-Whittaker operators in the variables $a_i$ on certain linear combinations of the horizontal probabilities \eqref{F_probabilistic_symmetrization_formula} has a nice form (see \Cref{lemma:action_is_nice_1} and \Cref{lemma:action_is_nice}). The idea to apply difference operators to probability weights resembles the treatment of the $q$-Whittaker measures by difference operators (\Cref{prop:qWhit_moments,prop:qWhit_process_moments}), but for the higher spin six vertex model we need to combine probabilities in a specific way to get the results (cf. \Cref{rmk:bad_examples}).

These computations lead to our first new proof of the $q$-moment formula for the higher spin six vertex model (\Cref{thm:obaservables_Thm9_8}), see \Cref{sub:applications_to_q_moments_and_q_laplace_transforms}. This proof works for joint moments and for the step boundary condition, which is the most general situation considered in \cite{BorodinPetrov2016inhom}.

\subsection{Computation with difference operators} % (fold)
\label{sub:computations_of_the_action}

Let $\nua_i:=\nu_i/a_i$, and in the rest of the paper in addition to \eqref{vertex_model_parameters} assume that 
\begin{equation}\label{condition_on_parameters_for_Whittaker_link}
	\textnormal{$a_i\nua_j<1$\quad{}for all\quad{}$i,j$}.
\end{equation}
Define
\begin{equation}\label{Phi_N_conjugation_factor}
	\Pi_N:=
	\Pi_{\WhittakerMeas}(a_1,\ldots,a_N;\nua_1,\ldots,\nua_N)
	=
	\prod_{i,j=1}^{N}\frac1{(a_i\nua_j;q)_{\infty}},
\end{equation}
see \eqref{Pi_definition}, \eqref{Cauchy_identity}, where as the second specialization $\rho$ we take the specialization into finitely many $\al$-parameters $\nua_1,\ldots,\nua_N$. Let $\HSOp_N$ be the following conjugation of the $q$-Whittaker difference operator \eqref{Whittaker_difference_operator} acting in $a_1,\ldots,a_N$:
\begin{equation}\label{conjugated_operators_DN_HSOp_N}
	\HSOp_N:=\Pi_N^{-1}\WhittakerOp_N\Pi_N
	=\sum_{r=1}^{N}
	\Bigg(
	\prod_{j=1}^{N}(1-\nua_j a_r)
	\prod_{1\le i\le N,\,i\ne r}\frac{a_i}{a_i-a_r}
	\Biggr)
	\ShiftOp_{q;a_r}.
\end{equation}
Passing from the variables $(a_i,\nua_i)$ to $(a_i,\nu_i)$, we can rewrite \eqref{conjugated_operators_DN_HSOp_N} as 
\begin{equation}\label{conjugated_operators_DN_HSOp_N_a_nu}
	\HSOp_N=\sum_{r=1}^{N}(1-\nu_r)\Bigg(
	\prod_{1\le i\le N,\,i\ne r}
	\frac{a_i-\nu_ia_r}{a_i-a_r}\Biggr)
	\ShiftOp_{q;a_r}
	\ShiftOp_{q;\nu_r}.
\end{equation}
Note that while the operator $\HSOp_N$ written as \eqref{conjugated_operators_DN_HSOp_N_a_nu} acts on the $a_i$'s and the $\nu_i$'s, it does not affect their combinations $\nua_i=\nu_i/a_i$. Formula \eqref{conjugated_operators_DN_HSOp_N_a_nu} turns out to be very useful in computations of this subsection. In particular, it implies that for any $u$, 
\begin{equation}\label{simple_q_commutation}
	\HSOp_N\bigg(\prod_{i=1}^{N}(\nu_i-a_i u)\biggr)
	=q\bigg(\prod_{i=1}^{N}(\nu_i-a_i u)\biggr)\HSOp_N.
\end{equation}
\begin{remark}\label{rmk:Macdonald_connection}
	From \eqref{conjugated_operators_DN_HSOp_N_a_nu} see that for any function $G$ depending on $a_1,\ldots,a_N$ but not on $\nu_1,\ldots,\nu_N$,
	\begin{equation}\label{relation_with_Macdonald_operators}
		\bigl(\HSOp_N G\bigr)\big\vert_{\nu_1=\ldots=\nu_N=t}=(1-t)\MacdonaldOp_NG,
	\end{equation}
	where $\MacdonaldOp_N$ is the usual Macdonald difference operator \eqref{Macdonald_operator} in the $a_i$'s. Note that \eqref{relation_with_Macdonald_operators} does not extend to a relation between powers $(\HSOp_N)^{k}$ and $(\MacdonaldOp_N)^k$, $k\ge2$, because $\HSOp_NG$ already depends on $\nu_1,\ldots,\nu_N$ even if $G$ did not. However,
	\begin{equation*}
		\HSOp_N \biggl(\ldots\Bigl(\HSOp_N \bigl(\HSOp_NG\bigr)\big\vert_{\nu_i\equiv t}
		\Bigr)\Big\vert_{\nu_i\equiv t} \ldots \biggr)\bigg\vert_{\nu_i\equiv t}=
		(1-t)^{k}(\MacdonaldOp_N)^{k}G,
	\end{equation*}
	where $\HSOp_N$ on the left is applied $k$ times and, by agreement, does not affect $t$.
\end{remark}

\begin{lemma}\label{lemma:D_N_first_order_as_contour}
	Assume that the $a_i$'s are nonzero real numbers which are either all positive or all negative. Let the function $f(w)$, $w\in\C$, be holomorphic and nonzero in a complex neighborhood of an interval in $\R$ containing $\{a_i,qa_i\}_{i=1}^{N}$. Let $G(a_1,\ldots,a_N):=f(a_1)\ldots f(a_N)$. Then
	\begin{equation}\label{D_N_first_order_as_contour}
		\frac{\HSOp_N G(a_1,\ldots,a_N)}{G(a_1,\ldots,a_N)}
		=-\frac{1}{2\pi\i}\oint
		\bigg(\prod_{i=1}^{N}\frac{a_i-\nu_i z}{a_i-z}\biggr)
		\frac{f(qz)}{f(z)}\frac{dz}{z},
	\end{equation}
	where the integration contour is positively oriented and encircles $a_1,\ldots,a_N$ but no other singularities of the integrand.
\end{lemma}
\begin{proof}
	Follows from the contour integral formula \eqref{qWhit_moments_formula_W_integral} for the $q$-Whittaker difference operator, or alternatively by interpreting \eqref{conjugated_operators_DN_HSOp_N_a_nu} as a residue expansion.
\end{proof}

\begin{lemma}\label{lemma:action_on_1}
	We have
	\begin{equation}\label{action_on_1}
		\HSOp_N1=1-\nu_1 \ldots\nu_N,
	\end{equation}
	and, moreover, for any $\ell$ and $N_1\ge \ldots\ge N_\ell\ge 1$ we have
	\begin{equation}\label{action_on_1_multipoint}
		\HSOp_{N_\ell}\ldots\HSOp_{N_2}\HSOp_{N_1} 1=
		\prod_{j=1}^{\ell}(1-q^{\ell-j}\nu_1 \ldots\nu_{N_j}).
	\end{equation}
	Here by $1$ we mean the constant function.
\end{lemma}
Note that the order of application of the $\HSOp_{N_i}$'s in \eqref{action_on_1_multipoint} is important.
\begin{proof}[Proof of \Cref{lemma:action_on_1}]
	First, observe that \eqref{action_on_1_multipoint} follows from \eqref{action_on_1} with the help of \eqref{simple_q_commutation}. To establish \eqref{action_on_1}, set $f(z)\equiv 1$ in \eqref{lemma:D_N_first_order_as_contour}, and note that the only singularities outside the integration contour are $z=0$ and $z=\infty$. Taking minus residues at these points, we obtain \eqref{action_on_1}.
\end{proof}

\begin{remark}
	In light of \Cref{rmk:Macdonald_connection}, observe that \eqref{action_on_1} generalizes {$\MacdonaldOp_N1=(1-t^N)/(1-t)$} (the latter identity is a consequence of \eqref{Macdonald_eigenrelation} and the fact that $1=P_{(0,0,\ldots,0)}(\cdots\md q,t)$ is a Macdonald polynomial).
\end{remark}

We will employ the following normalizations of the horizontal probabilities $\FProb_\ka$ \eqref{F_probabilistic_symmetrization_formula}:
\begin{align}
	\Phi_M(u_1,\ldots,u_T)&:=\prod_{i=1}^{T}\prod_{j=1}^{M}(1-a_ju_i),
	\label{Phi_M}
	\\
	\label{F_tilde}
	\FDenominators{M}_\ka
	(u_1,\ldots,u_{T}\md \mathbf{a},\boldsymbol\nu)&:=
	\Phi_M(u_1,\ldots,u_T)
	\FProb_\ka(u_1,\ldots,u_{T}\md \mathbf{a},\boldsymbol\nu),
\end{align}
where $\ka\in\Partt_T$, and $M\ge1$ is arbitrary. Observe that multiplying by $\Phi_M$ clears all the denominators in $\FProb_\ka$ involving $a_1,\ldots,a_M$. Therefore, $\FDenominators{M}_\ka$ is a polynomial in $\{a_i,\nu_i\}$ if $M\ge\ka_1$. This normalization of the symmetric rational functions $\FProb_\ka$ is similar to \cite[(30)--(31)]{deGierWheeler2016}.

\begin{lemma}\label{lemma:action_on_all_bigger_than_N}
	Fix $N\ge1$. For any $T\ge1$, $\ka\in\Partt_T$ with $\ka_T>N$, and any $M\ge N$ we have
	\begin{equation}\label{action_on_far_F}
		\HSOp_N \FDenominators{M}_\ka
		(u_1,\ldots,u_{T}\md \mathbf{a},\boldsymbol\nu)=
		q^{T}(1-\nu_1 \ldots\nu_N)
		\FDenominators{M}_\ka
		(u_1,\ldots,u_{T}\md \mathbf{a},\boldsymbol\nu).
	\end{equation}
	Moreover, for any $\ell$ and $N_1\ge \ldots\ge N_\ell\ge1$, $N_1\le N$, we have
	\begin{equation}\label{action_on_far_F_multilevel}
		\HSOp_{N_\ell}\ldots \HSOp_{N_1}
		\FDenominators{M}_\ka
		(u_1,\ldots,u_{T}\md \mathbf{a},\boldsymbol\nu)=
		q^{\ell T}
		\FDenominators{M}_\ka
		(u_1,\ldots,u_{T}\md \mathbf{a},\boldsymbol\nu)
		\cdot
		(\HSOp_{N_\ell}\ldots \HSOp_{N_1}1).
	\end{equation}
\end{lemma}
\begin{proof}
	The first identity immediately follows from \eqref{F_probabilistic_symmetrization_formula} because the prefactor (the product over $r\ge1$) does not contain $\nu_1,\ldots,\nu_N$, and in the sum over $\sigma\in S_T$ all summands are multiplied by $q$ under the action of $\HSOp_N$. The second identity follows by a repeated application of the first one similarly to the proof of \Cref{lemma:action_on_1}.
\end{proof}
The next statement is a key computation leading to the main results of this section.
\begin{lemma}\label{lemma:action_is_nice_1}
	For any fixed
	$N\ge1$, $T\ge0$, and $M\ge N$ we have
	\begin{equation}\label{action_is_nice_1}
		\HSOp_N\sum_{\ka\in\Partt_T\colon \ka_1\le N}
		\FDenominators{M}_\ka(u_1,\ldots,u_T\md
		\mathbf{a},\boldsymbol\nu)=
		(1-q^{T}\nu_1 \ldots\nu_N)\sum_{\ka\in\Partt_T\colon \ka_1\le N}
		\FDenominators{M}_\ka(u_1,\ldots,u_T\md
		\mathbf{a},\boldsymbol\nu).
	\end{equation}
\end{lemma}
\begin{proof}
	Denote
	\begin{equation*}
		\SSS_{T}(u_1,\ldots,u_T):=
		\sum_{\ka\in\Partt_T\colon \ka_1\le N}
		\FProb_\ka(u_1,\ldots,u_T\md\mathbf{a},\boldsymbol\nu).
	\end{equation*}
	This can be interpreted as the probability $\prob\bigl(\HeightFunction(N+1,T)=0\bigr)$ in the higher spin six vertex model with the step boundary condition (by agreement, $\SSS_0\equiv 1$). We aim to express $\SSS_T(u_1,\ldots,u_T)$ through $\SSS_0,\SSS_1,\ldots,\SSS_{T-1}$, and then argue by induction. The induction base is the case of $\SSS_0$, which is \Cref{lemma:action_on_1}.

	First, from \eqref{F_stoch_sum_to_one} and the form of the vertex weights (cf. \Cref{fig:vertex_weights_stoch}) we see that
	\begin{equation*}
		\sum_{\ka\in\Partt_T\colon \ka_1\le N+1}
		\FProb_\ka(u_1,\ldots,u_T\md\mathbf{a},\boldsymbol\nu)
		\big\vert_{a_{N+1}=\nu_{N+1}=0}
		=1,
	\end{equation*}
	because for $a_{N+1}=\nu_{N+1}=0$ no up-right path can go past the $(N+1)$st vertical column. On the other hand, the above sum contains $\SSS_T(u_1,\ldots,u_T)$ because setting $a_{N+1}=\nu_{N+1}=0$ does not affect $\FProb_\ka$ with $\ka_1\le N$. We thus get
	\begin{equation}\label{action_is_nice_1_proof0}
		\SSS_T(u_1,\ldots,u_T)=1-
		\sum_{\ka\in\Partt_T\colon \ka_1= N+1}
		\FProb_\ka(u_1,\ldots,u_T\md\mathbf{a},\boldsymbol\nu)\big\vert_{a_{N+1}=\nu_{N+1}=0}.
	\end{equation}

	Let $\ka$ in the above sum have $\ka_1=\ldots=\ka_m=N+1$, $\ka_{m+1}<N+1$ for some $m=1,\ldots,T$. Then we can write
	\begin{multline}
		\FProb_\ka(u_1,\ldots,u_T\md\mathbf{a},\boldsymbol\nu)
		\big\vert_{a_{N+1}=\nu_{N+1}=0}
		=
		\frac{(1-q)^{m}}{(q;q)_m}
		\sum_{\sigma\in S_T}
		\bigg(\prod_{i=1}^{m}\prod_{j=1}^{N}
		\frac{\nu_j-a_ju_{\sigma(i)}}{1-a_ju_{\sigma(i)}}\biggr)
		\\\times
		\prod_{r=1}^{N}\frac{(\nu_r;q)_{k_r}}{(q;q)_{k_r}}
		\prod_{1\le \aind<\bind\le T}\frac{u_{\sigma(\aind)}-qu_{\sigma(\bind)}}{u_{\sigma(\aind)}-u_{\sigma(\bind)}}
		\bigg(
		\prod_{i=m+1}^{T}\frac{1-q}{1-a_{\ka_i}u_{\sigma(i)}}
		\prod_{j=1}^{\ka_i-1}\frac{\nu_j-a_ju_{\sigma(i)}}{1-a_ju_{\sigma(i)}}
		\biggr)
		,
		\label{action_is_nice_1_proof}
	\end{multline}
	where $\ka=(1^{k_1}2^{k_2}\ldots)$ in the multiplicative notation. We claim that the sum of the above expressions over all such $\ka$ (with $\ka_m=N+1$ and $\ka_{m+1}<N$) is equal to
	\begin{equation}
		\sum_{I\subseteq\{1,\ldots,T\}\colon|I|=m}
		\,\prod_{i\in I}
		\bigg(\prod_{j=1}^{N}
		\frac{\nu_j-a_ju_i}{1-a_ju_i}
		\prod_{\bind\notin I}
		\frac{u_{i}-q u_{\bind}}{u_{i}-u_{\bind}}
		\bigg)
		\SSS_{T-m}(u_p\colon p\notin I).
		\label{action_is_nice_1_proof2}
	\end{equation}
	Indeed, this is seen by splitting the product over $\aind<\bind$ into three products $\bind\le m$, $\aind\le m$ and $\bind>m$, and $\aind>m$, and using the symmetrization identity \cite[Ch.\;III.1, formula\;(1.4)]{Macdonald1995}:
	\begin{equation*}
		\sum_{\omega\in S_\ell}\prod_{1\le \aind<\bind\le \ell}
		\frac{u_{\omega(\aind)}-q u_{\omega(\bind)}}
		{u_{\omega(\aind)}-u_{\omega(\bind)}}=\frac{(q;q)_{\ell}}{(1-q)^{\ell}},
		\qquad \ell\in\Z_{\ge1}.
	\end{equation*}
	Then $\SSS_{T-m}$ in \eqref{action_is_nice_1_proof2} incorporates the summation over $\ka_{m+1},\ldots,\ka_N$, while the sum over $I$ is the same as the sum over the set $\{\sigma(1),\ldots,\sigma(m)\}$ in \eqref{action_is_nice_1_proof} (here and below $|I|$ means the number of elements in a set $I$). 

	To shorten the notation, denote
	\begin{equation*}
		\CCC_I(u_1,\ldots,u_T):=\prod_{i\in I}
		\bigg(\prod_{j=1}^{N}
		\frac{\nu_j-a_ju_i}{1-a_ju_i}
		\prod_{\bind\notin I}
		\frac{u_{i}-q u_{\bind}}{u_{i}-u_{\bind}}
		\bigg).
	\end{equation*}
	The above computation with \eqref{action_is_nice_1_proof}, \eqref{action_is_nice_1_proof2} shows that \eqref{action_is_nice_1_proof0} can be written as
	\begin{equation}\label{action_is_nice_1_proof3}
		\SSS_T(u_1,\ldots,u_T)=1-
		\sum_{I\subseteq\{1,\ldots,T\}\colon|I|\ge 1}
		\CCC_I(u_1,\ldots,u_T)
		\SSS_{T-|I|}(u_p\colon p\notin I).
	\end{equation}
	We aim to apply the operator $\HSOp_N$ to the above identity multiplied by $\Phi_M(u_1,\ldots,u_T)$. To do that, first apply \Cref{lemma:D_N_first_order_as_contour} to write (the contour encircles the $a_i$'s but no other singularities)
	\begin{equation}
		\begin{split}
			\frac{\HSOp_N\Phi_M(u_1,\ldots,u_T)}
			{\Phi_M(u_1,\ldots,u_T)}&=
			-\frac{1}{2\pi\i}\oint
			\prod_{i=1}^{N}\frac{a_i-\nu_i z}{a_i-z}
			\bigg(\prod_{j=1}^{T}
			\frac{1-qu_jz}{1-u_jz}\bigg)\frac{dz}{z}
			\\&=
			1
			-q^{T}\nu_1 \ldots\nu_N-
			(1-q)
			\sum_{j=1}^{T}
			\CCC_{\{j\}}(u_1,\ldots,u_T).
		\end{split}
		\label{action_is_nice_1_proof4}
	\end{equation}
	Here we evaluated the integral by taking minus residues at the poles $z=0$, $z=\infty$, and $z=u_j^{-1}$, $j=1,\ldots,T$, outside the integration contour. The latter residues lead to the $\CCC_{\{j\}}$'s.
	
	Denote
	\begin{equation*}
	\begin{split}
		\SSST_T(u_1,\ldots,u_T):={}&\SSS_T(u_1,\ldots,u_T)
		\Phi_M(u_1,\ldots,u_T),\\
		\CCCT_I(u_1,\ldots,u_T):={}&\CCC_I(u_1,\ldots,u_T)
		\Phi_M(u_i\colon i\in I)
		.
	\end{split}
	\end{equation*}
	Using the induction assumption, we have from \eqref{action_is_nice_1_proof3}, \eqref{action_is_nice_1_proof4}:
	\begin{multline}
		\HSOp_N
		\SSST_T=
		(1
		-q^{T}\nu_1 \ldots\nu_N)\Phi_M-
		(1-q)
		\sum_{j=1}^{T}
		\Phi_M(\widehat{u_j})
		\CCCT_{\{j\}}
		\\-
		\sum_{I\subseteq\{1,\ldots,T\}\colon|I|\ge 1}
		(q^{|I|}-q^{T}\nu_1 \ldots\nu_N)
		\CCCT_{I}
		\SSST_{T-|I|}(u_p\colon p\notin I),
		\label{action_is_nice_1_proof5}
	\end{multline}
	where $(\widehat{u_j})=(u_1,\ldots,u_{j-1},u_{j+1},\ldots,u_T)$, and here and below in the proof all functions depending on the $u_i$'s have $(u_1,\ldots,u_T)$ as their arguments unless explicitly indicated. In the part of the last sum in the right-hand side of \eqref{action_is_nice_1_proof5} corresponding to $|I|=1$ express $\SSST_{T-1}$ using \eqref{action_is_nice_1_proof3}:
	\begin{equation}\label{action_is_nice_1_proof6}
		\sum_{j=1}^{T}
		\CCCT_{\{j\}}
		\SSST_{T-1}(\widehat{u_j})
		=
		\sum_{j=1}^{T}
		\CCCT_{\{j\}}
		\bigg(
		\Phi_M(\widehat{u_j})-
		\sum_{I_j\subseteq\{1,\ldots,T\}\setminus\{j\}
		\colon |I_j|\ge 1}
		\CCCT_{I_j}(\widehat{u_j})\SSST_{T-1-|I_j|}
		(u_p\colon p\notin I_j\cup\{j\})
		\bigg).
	\end{equation}
	Observe that
	\begin{equation*}
		\CCCT_{\{j\}}(u_1,\ldots,u_T)
		\CCCT_{I_j}(\widehat{u_j})=
		\CCCT_{I_j\cup \{j\}}(u_1,\ldots,u_T)
		\prod_{\bind\in I_j}\frac{u_j-q u_\bind}{u_j-u_\bind},
	\end{equation*}
	and so a part of \eqref{action_is_nice_1_proof6} can be simplified as
	\begin{multline}\label{action_is_nice_1_proof7}
		\sum_{j=1}^{T}\,
		\sum_{I_j\subseteq\{1,\ldots,T\}\setminus\{j\}
		\colon |I_j|\ge 1}
		\,\prod_{\bind\in I_j}\frac{u_j-q u_\bind}{u_j-u_\bind}\,
		\CCCT_{I_j\cup \{j\}}(u_1,\ldots,u_T)
		\SSST_{T-|I_j\cup\{j\}|}
		(u_p\colon p\notin I_j\cup\{j\})
		\\=
		\sum_{I\subseteq\{1,\ldots,T\}\colon|I|\ge 2}\frac{1-q^{|I|}}{1-q}
		\CCCT_{I}(u_1,\ldots,u_T)\SSST_{T-|I|}(u_p\colon p\notin I).
	\end{multline}
	Indeed, in the left-hand side of \eqref{action_is_nice_1_proof7} there are $|I_j|+1$ choices of the index $j$ to form $I_j\cup \{j\}$, and summing over these choices with the help of the identity 
	\begin{equation*}
		\sum_{\ell=1}^{A}\prod_{i\ne \ell}
		\frac{u_i-qu_\ell}{u_i-u_\ell}
		=
		\frac{1-q^{A}}{1-q},\qquad A\in\Z_{\ge1}
	\end{equation*}
	(this is the same as $\MacdonaldOp_N1=(1-t^N)/(1-t)$), we get the right-hand side of \eqref{action_is_nice_1_proof7}.

	Applying \eqref{action_is_nice_1_proof7} to \eqref{action_is_nice_1_proof5}, we conclude that 
	\begin{multline}\label{action_is_nice_1_proof8}
		\HSOp_N
		\SSST_T=
		(1
		-q^{T}\nu_1 \ldots\nu_N)\Phi_M
		-(1
		-q^{T}\nu_1 \ldots\nu_N)
		\sum_{j=1}^{T}
		\Phi_M(\widehat{u_j})
		\CCCT_{\{j\}}
		\\-
		\sum_{I\subseteq\{1,\ldots,T\}\colon|I|\ge 2}
		\bigg(q^{|I|}-q^{T}\nu_1 \ldots\nu_N
		-(q-q^{T}\nu_1 \ldots\nu_N)\frac{1-q^{|I|}}{1-q}
		\bigg)
		\CCCT_{I}
		\SSST_{T-|I|}(u_p\colon p\notin I).
	\end{multline}
	Noting that
	\begin{equation*}
		q^{|I|}-q^{T}\nu_1 \ldots\nu_N
		-(q-q^{T}\nu_1 \ldots\nu_N)\frac{1-q^{|I|}}{1-q}=
		(1-q^{T}\nu_1 \ldots\nu_N)\frac{q^{|I|}-q}{1-q},
	\end{equation*}
	we finally get an overall factor $1-q^{T}\nu_1 \ldots\nu_N$ in $\HSOp_N\SSST_T$. It remains to show that the right-hand side of \eqref{action_is_nice_1_proof8} divided by this factor is equal to $\SSST_T$. This follows by doing the previous computation \eqref{action_is_nice_1_proof6}--\eqref{action_is_nice_1_proof7} backwards, because $-(q^{|I|}-q)/(1-q)=-1+(1-q^{|I|})/(1-q)$. This completes the proof.
\end{proof}

\begin{remark}\label{rmk:bad_examples}
	In \Cref{lemma:action_is_nice_1} it is crucial that we take a specific linear combination of the $\FDenominators{M}_\ka$'s, as for individual such functions the statement might fail. Indeed, for example,
	\begin{equation}\label{bad_examples}
		% \frac{\HSOp_1\FDenominators2_{(2)}(u_1\md\mathbf{a},\boldsymbol\nu)}
		% {\FDenominators2_{(2)}(u_1\md\mathbf{a},\boldsymbol\nu)}=q(1-\nu_1),\qquad
		\frac{\HSOp_2\FDenominators2_{(2)}(u_1)}
		{\FDenominators2_{(2)}(u_1)}=
		1-q\nu_1\nu_2+(1-q)(1-\nu_1)\frac{a_2}{a_1-a_2},
		\qquad 
		\frac{\HSOp_2\bigl(
		\FDenominators2_{(1)}(u_1)+
		\FDenominators2_{(2)}(u_1)\bigr)}
		{\FDenominators2_{(1)}(u_1)+
		\FDenominators2_{(2)}(u_1)}=1-q\nu_1\nu_2
	\end{equation}
	(we omitted dependence on $\mathbf{a},\boldsymbol\nu$ for shorter notation). The second equality in \eqref{bad_examples} follows from \Cref{lemma:action_is_nice_1}, while in the first equality this lemma fails.
\end{remark}

\begin{proposition}\label{lemma:action_is_nice}
	For any $N\ge1$, $T\ge0$, $\ell\ge1$, $N\ge N_1\ge \ldots\ge N_\ell\ge1$, and $M\ge N_1$ we have
	\begin{multline}\label{computation_of_prtial_symm_multilevel}
		\HSOp_{N_\ell}\ldots \HSOp_{N_1}
		\sum_{\ka\in\Partt_T\colon \ka_1\le N}
		\FDenominators{M}_\ka
		(u_1,\ldots,u_T\md\mathbf{a},\boldsymbol\nu)\\=
		\sum_{\ka\in\Partt_T\colon \ka_1\le N}
		\,\prod_{j=1}^{\ell}\Big(q^{\HeightFunction_{\ka}(N_j+1)}-
		q^{T+\ell-j}\nu_1 \ldots\nu_{N_j}\Big)
		\FDenominators{M}_\ka
		(u_1,\ldots,u_T\md\mathbf{a},\boldsymbol\nu),
	\end{multline}
	where $\HeightFunction_{\ka}(N_j+1)$ is the number of components of $\ka$ which are $\ge N_j+1$.
\end{proposition}
\begin{proof}
	Consider the action of $\HSOp_{N_1}$ on the sum over $\ka$ with $\ka_1\le N$ as in the left-hand side of \eqref{computation_of_prtial_symm_multilevel}. Separate this sum into $T+1$ sums as follows:
	\begin{equation*}
		\sum_{\ka_1\le N}\FDenominators{M}_\ka(u_1,\ldots,u_T\md\mathbf{a},\boldsymbol\nu)
		=\sum_{m=0}^{T}\;\sum_{{N\ge \ka_1\ge \ldots\ge \ka_m>N_1,\,
		\ka_{m+1}\le N_1}}\FDenominators{M}_\ka(u_1,\ldots,u_T\md\mathbf{a},\boldsymbol\nu).
	\end{equation*}
	Write each $\FDenominators{M}_\ka$ in the sum above as a sum over permutations $\sigma\in S_T$ using \eqref{F_probabilistic_symmetrization_formula}. Then every term in this sum over $\sigma$ contains $m$ multiplicative factors of the form $\prod_{i=1}^{N_1}(\nu_i-a_i u_{\sigma(j)})$, $j=1,\ldots,m$. Interchanging the operator $\HSOp_{N_1}$ with them as in \eqref{simple_q_commutation} introduces the factor $q^{m}=q^{\HeightFunction_\ka(N_1+1)}$. After this interchange, the rest of the sum over $\sigma$ can be written as a linear combination with coefficients independent of $a_1,\ldots,a_{N_1},\nu_1,\ldots,\nu_{N_1}$ of quantities of the form
	\begin{equation*}
		\FDenominators{M}_{(\ka_{m+1},\ka_{m+2},\ldots,\ka_T)}(u_{\sigma(m+1)},\ldots,u_{\sigma(T)}\md\mathbf{a},\boldsymbol\nu).
	\end{equation*}
	The action of $\HSOp_{N_1}$ on the sum of the above quantities over all $N_1\ge\ka_{m+1}\ge\ldots\ge\ka_T$ introduces the factor $(1-q^{T-m}\nu_1 \ldots\nu_{N_1})$ by \Cref{lemma:action_is_nice_1}. Since this factor is independent of the $u_j$'s or $\ka_1,\ldots,\ka_m$, we conclude that the desired identity \eqref{computation_of_prtial_symm_multilevel} holds for $\ell=1$.

	The general case follows by a repeated application of \eqref{computation_of_prtial_symm_multilevel} with $\ell=1$ since
	\begin{equation*}
		\HSOp_{N_\ell}\ldots\HSOp_{N_2}
		\bigl(q^{\HeightFunction_\ka(N_1+1)}-
		q^{T}\nu_1 \ldots\nu_{N_1}\bigr)=
		\bigl(q^{\HeightFunction_\ka(N_1+1)}-
		q^{T+\ell-1}\nu_1 \ldots\nu_{N_1}\bigr)\HSOp_{N_\ell}
		\ldots\HSOp_{N_2}.
	\end{equation*}
	This completes the proof.
\end{proof}

% subsection computations_of_the_action (end)

\subsection{Application to higher spin six vertex model} % (fold)
\label{sub:applications_to_q_moments_and_q_laplace_transforms}

The next theorem summarizes the computations of \Cref{sub:computations_of_the_action}.
\begin{theorem}\label{thm:action}
	{\rm\bf{1.\/}\/}
	Under assumptions \eqref{vertex_model_parameters}, \eqref{condition_on_parameters_for_Whittaker_link}, for any $T\ge0$, $\ell\ge1$, $N_1\ge \ldots\ge N_\ell\ge1$, and arbitrary $M\ge N_1$ we have
	\begin{equation}\label{thm_action_1}
		\EE_{\mathbf{HS}}^{\textnormal{step}}
		\bigg[\prod_{j=1}^{\ell}\Big(q^{\HeightFunction(N_j+1,T)}-
		q^{T+\ell-j}\nu_1 \ldots\nu_{N_j}\Big)\bigg]=
		\frac{\HSOp_{N_\ell}\ldots \HSOp_{N_1}\Phi_M}{\Phi_M},
	\end{equation}
	where the operators $\HSOp_{N_j}$ \eqref{conjugated_operators_DN_HSOp_N_a_nu} are conjugate to the first $q$-Whittaker difference operators in $N_j$ variables, $\Phi_M$ is defined by \eqref{Phi_M}, $\EE_{\mathbf{HS}}^{\textnormal{step}}$ means the expectation with respect to the higher spin six vertex model with the step boundary condition, and $\HeightFunction$ is the height function \eqref{height_function_def}.
	
	\smallskip

	\noindent{\rm\bf{2.\/}\/} If, in addition, the parameters of the higher spin six vertex model satisfy
	\begin{equation}\label{7.4_prime_condition}
		\min\limits_{1\le i\le N}a_i>q\max\limits_{1\le i\le N}a_i,
	\end{equation}
	then \eqref{thm_action_1} can be written in a nested contour integral form:
	\begin{equation}\label{thm_action_2}
		\begin{split}
			&\EE_{\mathbf{HS}}^{\textnormal{step}}
			\bigg[\prod_{j=1}^{\ell}\Big(q^{\HeightFunction(N_j+1,T)}-
			q^{T+\ell-j}\nu_1 \ldots\nu_{N_j}\Big)\bigg]
			\\&\hspace{40pt}=
			\frac{(-1)^{\ell}q^{\ell(\ell-1)/2}}{(2\pi\i)^{\ell}}
			\oint\frac{dz_1}{z_1}\ldots \oint
			\frac{dz_\ell}{z_\ell}
			\prod_{1\le \aind<\bind\le \ell}\frac{z_{\aind}-z_{\bind}}{z_{\aind}-qz_{\bind}}
			\prod_{j=1}^{\ell}\bigg(
			\prod_{i=1}^{N_j}\frac{a_i-\nu_i z_j}{a_i-z_j}
			\prod_{r=1}^{T}\frac{1-qu_rz_j}{1-u_rz_j}\bigg),
		\end{split}
	\end{equation}
	where the integration contours are as in \Cref{fig:qWhit_contours}: positively oriented, encircle $a_1,\ldots,a_N$, the $z_j$ contour contains $qz_{j+1},\ldots,qz_{\ell}$, and the contours encircle no other singularities.
\end{theorem}
\begin{proof}
	Taking $N\to+\infty$ in \Cref{lemma:action_is_nice} lifts the restriction that $\ka_1\le N$ in both sides of \eqref{computation_of_prtial_symm_multilevel}. Using \eqref{F_stoch_sum_to_one}, we see that the sum in the left-hand side of \eqref{computation_of_prtial_symm_multilevel} is equal to $\Phi_M$. Dividing both sides by $\Phi_M$, we obtain \eqref{thm_action_1}.

  The contour integral formula \eqref{thm_action_2} follows from \eqref{thm_action_1} by repeatedly applying \Cref{lemma:D_N_first_order_as_contour} (this is similar to the $q$-Whittaker case, cf.{} the proof of \Cref{prop:qWhit_process_moments}).
\end{proof}
\begin{proof}[Proof of \Cref{thm:obaservables_Thm9_8} via difference operators]
	We start from the nested contour integral formulas \eqref{thm_action_2}. Assume that the parameters of the higher spin six vertex model satisfy \eqref{vertex_model_parameters}, \eqref{7.4_conditions}, \eqref{condition_on_parameters_for_Whittaker_link}, and \eqref{7.4_prime_condition}. The latter two conditions can be dropped after establishing identity \eqref{obaservables_Thm9_8} (similarly to analytic continuation arguments in \cite[Section 7.2]{BorodinPetrov2016inhom}). 

	First, note that if $N_\ell=0$ in \eqref{obaservables_Thm9_8}, then the only $w_\ell$ pole in the right-hand side outside the integration contour is infinity, and the minus residue there is equal to $q^{T}$. This agrees with the fact that $\HeightFunction(1,T)$ is identically equal to $T$. Thus, we can always drop any number of zeros among $N_1\ge \ldots\ge N_\ell$, and consider only the case $N_\ell\ge1$. 

	Observe that the right-hand side of \eqref{obaservables_Thm9_8} can be written as
	\begin{equation}\label{obaservables_Thm9_8_proof1}
		q^{\frac{\ell(\ell-1)}2}
		\oint\limits_{\cont[\mathbf{u}^{-1}\md1]}\frac{d w_1}{2\pi\i}
		\ldots
		\oint\limits_{\cont[\mathbf{u}^{-1}\md\ell]}
		\frac{d w_\ell}{2\pi\i}
		\prod_{1\le \aind<\bind\le \ell}
		\frac{w_\aind-w_\bind}{w_\aind-qw_\bind}
		\prod_{i=1}^{\ell}
		\frac{g_{N_i}(w_i)}{w_i},
	\end{equation}
	where
	\begin{equation*}
		g_{N_i}(w):=\prod_{j=1}^{N_i}
		\frac{a_j-\nu_jw}{a_j-w}
		\prod_{j=1}^{T}\frac{1-qu_jw}{1-u_jw}
	\end{equation*}
	is a rational function
	holomorphic in a neighborhood of infinity, and $g_{N_i}(\infty)=q^{T}\nu_1 \ldots\nu_{N_i}$. Let us evaluate the integral in \eqref{obaservables_Thm9_8_proof1} by taking minus residues at poles outside the integration contours, starting from $w_\ell$ down to $w_1$. In doing so, let us separate the pole at infinity from all other poles. This leads to a linear combination of $2^{\ell}$ summands containing integrals like \eqref{thm_action_2}. Indeed, the non-infinite poles of \eqref{obaservables_Thm9_8_proof1} are $w_\ell=a_i$, $w_{\ell-1}=a_i$ or $qa_i$, etc., and the integration contours in \eqref{thm_action_2} pick exactly these residues.

	In more detail, these $2^{\ell}$ integrals are indexed by subsets $I\subseteq\{1,\ldots,\ell\}$ such that the $w_i$'s for $i\notin I$ correspond to minus residue at infinity. Let $|I|=k$, $k=0,\ldots,\ell$, $I=\{i_1<\ldots<i_k\}$, and $\{1,\ldots,\ell\}\setminus I=\{p_1<\ldots<p_{\ell-k}\}$. Also denote $\|I\|:=i_1+\ldots+i_k$, and note that $\ell(\ell-1)/2-\|I\|=p_1+\ldots+p_{\ell-k}$. Clearly, taking the minus residue at each $w_{p_{j}}=\infty$ introduces the factor $q^{-(p_j-1)}g_{N_{p_j}}(\infty)$ coming from the product over $\aind<\bind$ in \eqref{obaservables_Thm9_8_proof1} (recall that the residues are taken in the order $w_\ell,\ldots,w_1$). Thus, \eqref{obaservables_Thm9_8_proof1} becomes (after renaming $w_{i_j}=z_j$): 
	\begin{multline*}
		\sum_{k=0}^{\ell}\,\sum_{I=\{i_1<\ldots<i_k\}\subseteq\{1,\ldots,\ell\}}
		q^{\|I\|+\ell-k}\bigg(\prod_{r\notin I}q^{T}\nu_1 \ldots\nu_{N_r}\bigg)
		\\\times
		\frac{(-1)^{k}}{(2\pi\i)^{k}}
		\oint dz_1\ldots \oint
		dz_k
		\prod_{1\le \aind<\bind\le k}\frac{z_{\aind}-z_{\bind}}{z_{\aind}-qz_{\bind}}
		\prod_{j=1}^{k}\frac{g_{N_{i_j}}(z_j)}{z_j},
	\end{multline*}
	where the contours are now $q$-nested as in \Cref{thm:action} (given in \Cref{fig:qWhit_contours}). With the help of \eqref{thm_action_2}, each integral above equals to an expectation, and the desired statement would follow from the formal identity
	\begin{equation}\label{formal_identity}
		\sum_{k=0}^{\ell}
		q^{\frac{\ell(\ell+1)}2-\frac{k(k+1)}2}
		\sum_{I=\{i_1<\ldots<i_k\}\subseteq\{1,\ldots,\ell\}}
		\bigg(\prod_{r\notin I}b_r\bigg)
		(X_{i_1}-q^{k-1+i_1}b_{i_1})
		\ldots
		(X_{i_k}-q^{i_k}b_{i_k})
		=X_1 \ldots X_\ell
	\end{equation}
	in indeterminates $X_1,\ldots,X_\ell, b_1,\ldots,b_\ell$, and $q$. Indeed, $X_i$ should be matched to $q^{\HeightFunction(N_i+1,T)+i}$, and $b_i$ --- to $q^{T}\nu_1 \ldots\nu_{N_i}$.

	Identity \eqref{formal_identity} can be proven by induction (base $\ell=0$ or $1$ is readily verified). Since it is linear in $X_{\ell}$, it suffices to establish it for two values of $X_\ell$, say, $q^{\ell}b_\ell$ and $\infty$. In the first case the summation in \eqref{formal_identity} reduces to subsets not containing $\ell$, and the identity becomes
	\begin{equation*}
		\sum_{k=0}^{\ell-1}
		q^{\frac{\ell(\ell+1)}2-\frac{k(k+1)}2}
		\sum_{I\subseteq\{1,\ldots,\ell-1\},\,|I|=k}
		b_{\ell}\bigg(\prod_{r\notin I}b_r\bigg)
		(X_{i_1}-q^{k-1+i_1}b_{i_1})
		\ldots
		(X_{i_k}-q^{i_k}b_{i_k})
		=X_1 \ldots X_{\ell-1}q^{\ell}b_\ell,
	\end{equation*}
	which is the same as \eqref{formal_identity} with $\ell$ replaced by $\ell-1$, and so holds by the induction assumption. Similarly, dividing \eqref{formal_identity} by $X_{\ell}$ and sending $X_\ell\to\infty$, we obtain a sum over subsets which must contain $\ell$: 
	\begin{equation*}
		\sum_{k=1}^{\ell}
		q^{\frac{\ell(\ell+1)}2-\frac{k(k+1)}2}
		\sum_{\substack{I\subseteq\{1,\ldots,\ell\}\\|I|=k,\,i_k=\ell}}
		\bigg(\prod_{r\notin I}b_r\bigg)
		(X_{i_1}-q^{k-1+i_1}b_{i_1})
		\ldots
		(X_{i_{k-1}}-q^{i_{k-1}}b_{i_{k-1}})
		=X_1 \ldots X_{\ell-1}.
	\end{equation*}
	This equality is also equivalent to \eqref{formal_identity} with $\ell-1$.

	This completes the proof of \Cref{thm:obaservables_Thm9_8}.
\end{proof}

\begin{remark}
	The proof of how formula \eqref{thm_action_2} for averages of the product-form observables implies \Cref{thm:obaservables_Thm9_8} is similar to the proof of \cite[Lemma 9.11]{BorodinPetrov2016inhom}. The latter involves separating residues at zero instead of at infinity, which leads to different product-form observables than the ones in the left-hand side of \eqref{thm_action_2}.
\end{remark}

% subsection applications_to_q_moments_and_q_laplace_transforms (end)

\subsection{Matching to $q$-Whittaker measures} % (fold)
\label{sub:matching_with_q_whittaker_measures}

The operators $\HSOp_N$ involved in the expression \eqref{thm_action_1} for averages of the product-form observables with respect to the stochastic higher spin six vertex model are conjugate to the first $q$-Whittaker operators $\WhittakerOp_N$ \eqref{Whittaker_difference_operator}, see \eqref{conjugated_operators_DN_HSOp_N}. This allows to match the left-hand side of \eqref{thm_action_1} to $q$-moments of a certain $q$-Whittaker measure. To this end, for any $T\ge0$ and $N\ge1$ define a specialization $\rho(N,T)$ having the only nonzero parameters
\begin{equation}\label{matching_Whittaker_parameters}
	(\al_1,\ldots,\al_N)=(\nua_1,\ldots,\nua_N)\quad
	\textnormal{and}\quad
	(\be_1,\ldots,\be_T)=(-u_1,\ldots,-u_T),
\end{equation}
cf. \eqref{Pi_definition} (recall that $\nua_i=\nu_i/a_i$).
\begin{theorem}\label{thm:matching}
	Under \eqref{vertex_model_parameters}, \eqref{condition_on_parameters_for_Whittaker_link}, for any $T\ge0$, $N\ge1$, and $\ell\ge 1$ we have
	\begin{equation}\label{matching_moment_formula}
		\EE_{\mathbf{HS}}^{\textnormal{step}}
		\bigg[\prod_{j=1}^{\ell}\Big(q^{\HeightFunction(N+1,T)}-
		q^{T+\ell-j}\nu_1 \ldots\nu_{N}\Big)\bigg]=
		\EE_{\WhittakerMeas_{(a_1,\ldots,a_N);\rho(N,T)}}
		\big[q^{\ell\,\la_N}\big], 
	\end{equation}
	where $\EE_{\WhittakerMeas_{(a_1,\ldots,a_N);\rho(N,T)}}$ is the expectation with respect to the $q$-Whittaker measure \eqref{qWhittaker_measure} corresponding to the specialization \eqref{matching_Whittaker_parameters}, and $\la_N$ is the last component of the random partition $\la\in\Part_N$ under this measure.
\end{theorem}
Conditions \eqref{vertex_model_parameters} ensure that the specialization \eqref{matching_Whittaker_parameters} is $q$-Whittaker nonnegative, and this together with \eqref{condition_on_parameters_for_Whittaker_link} implies that the $q$-Whittaker measure in the right-hand side of \eqref{matching_moment_formula} is well-defined and has nonnegative probability weights.
\begin{proof}[Proof of \Cref{thm:matching}]
	Setting $N_1=\ldots=N_\ell=N$ and $M=N$ in \eqref{thm_action_1} and using \eqref{conjugated_operators_DN_HSOp_N}, we see that the left-hand side of \eqref{matching_moment_formula} has the form
	\begin{equation}\label{matching_moment_formula_proof}
		\frac{(\WhittakerOp_N)^{\ell}(\Pi_N\Phi_N)}{\Pi_N\Phi_N},
		\quad\textnormal{where}\quad
		\Pi_N\Phi_N=
		\Pi_{\WhittakerMeas}\bigl(a_1,\ldots,a_N;\rho(N,T)\bigr),
	\end{equation}
	which together with \Cref{prop:qWhit_moments} implies the claim.
\end{proof}
\begin{remark}
	\Cref{thm:matching} can alternatively be deduced by comparing contour integral expressions for both sides of \eqref{matching_moment_formula}: for the left-hand side such an expression would follow from the results of \cite{BorodinPetrov2016inhom}, and for the right-hand side this is \Cref{prop:qWhit_moments}. However, let us emphasize that our proof of \Cref{thm:matching} does not use \cite{BorodinPetrov2016inhom} or contour integrals, and instead follows by considering difference operators.
\end{remark}
\begin{remark}\label{rmk:clash_for_multipoint}
	Averages of joint observables of the higher spin six vertex model depending on several locations $(N_i+1,T)$ as in \Cref{thm:action} cannot be directly interpreted as expectations with respect to a $q$-Whittaker measure or process because of the different conjugations arising in the right-hand side of \eqref{thm_action_1}:
	\begin{equation*}
		\frac{\HSOp_{N_\ell}\ldots \HSOp_{N_1}\Phi_M}{\Phi_M}
		=
		\frac{\Pi_{N_\ell}^{-1}
		\WhittakerOp_{N_\ell}\Pi_{N_\ell}\ldots \Pi_{N_2}^{-1}
		\WhittakerOp_{N_2}\Pi_{N_2}
		\Pi_{N_1}^{-1}\WhittakerOp_{N_1}\Pi_{N_1}\Phi_M}{\Phi_M}.
	\end{equation*}
	In \Cref{sec:capling_with_q_tasep} below we interpret joint averages as in \Cref{thm:action} (but for the step Bernoulli boundary conditions) via a certain $q$-TASEP.
\end{remark}

\begin{corollary}\label{cor:matching_gen_functions}
	For any $T\ge0$, $N\ge1$, and $\zeta\in\C\setminus\R_{<0}$ we have
	\begin{equation}\label{matching_gen_functions}
		\EE_{\mathbf{HS}}^{\textnormal{step}}
		\Bigl[
		\frac{(\zeta q^{T}\nu_1\cdots\nu_N;q)_{\infty}}
		{(\zeta q^{\HeightFunction(N+1,T)};q)_{\infty}}
		\Bigr]
		=\EE_{\WhittakerMeas_{(a_1,\ldots,a_N);\rho(N,T)}}
		\Bigl[
		\frac1{(\zeta q^{\la_{N}};q)_{\infty}}
		\Bigr].
	\end{equation}
\end{corollary}
\begin{proof}
	This is achieved by multiplying both sides of \eqref{matching_moment_formula} by $\zeta^{\ell}/(q;q)_{\ell}$ and summing over $\ell\ge0$ with the help of the $q$-binomial theorem \cite[(1.3.2)]{GasperRahman}. As the random variables $\prod_{j=1}^{\ell}(q^{\HeightFunction(N+1,T)}-q^{T+\ell-j}\nu_1 \ldots\nu_{N})$ and $q^{\ell\,\la_N}$ in both sides are bounded, both $q$-binomial series converge for sufficiently small $|\zeta|$, and so the summations and expectations can be interchanged. After establishing \eqref{matching_gen_functions} for small $|\zeta|$, we can analytically continue this identity to $\zeta\in\C\setminus\R_{<0}$.
\end{proof}

\begin{remark}\label{rmk:Fredholm_and_asymptotics}
	The right-hand side of \eqref{matching_gen_functions} can be viewed as a $q$-analogue of the Laplace transform of the random variable $\la_N$ \cite[Section 3.1.1]{BorodinCorwin2011Macdonald}, and can be written as a certain Fredholm determinant \cite[Section 3.2.3]{BorodinCorwin2011Macdonald}, \cite{BorodinCorwin2013discrete}. Such Fredholm determinants are instrumental for asymptotic analysis, cf. \cite{FerrariVeto2013}, \cite{barraquand2015phase}.\footnote{These two papers deal with the continuous time $q$-TASEP corresponding (as in \Cref{prop:qTASEP_moments} below) to the $q$-Whittaker measure \eqref{qWhittaker_measure} in which the specialization $\rho$ has the only nonzero parameter $\gamma$ (interpreted as continuous time).}
\end{remark}

Let $\bar\rho(N,T)$ be the specialization whose only nonzero parameters are
\begin{equation}\label{matching_Whittaker_parameters_Bernoulli}
	(\al_1,\ldots,\al_{N-1})=(\nua_2,\ldots,\nua_N)\quad
	\textnormal{and}\quad
	(\be_1,\ldots,\be_T)=(-u_1,\ldots,-u_T).
\end{equation}
\begin{corollary}\label{cor:matching_Bernoulli}
	For any $T\ge0$ and $N\ge1$ the distribution of the height function $\HeightFunction(N+1,T)$ of the stochastic higher spin six vertex model with the step Bernoulli boundary condition (defined in \Cref{sub:step_and_bernoulli_boundary_conditions}) is the same as the distribution of the last component $\la_N$ of a random partition under the $q$-Whittaker measure $\WhittakerMeas_{(a_1,\ldots,a_N);\bar\rho(N,T)}$. 
\end{corollary}
\begin{proof}
	Setting $\nu_1=0$ (and so $\nua_1=0$) in \Cref{cor:matching_gen_functions} turns the numerator in the left-hand side of \eqref{matching_gen_functions} into $1$, and so we have
	\begin{equation*}
		\EE_{\mathbf{HS}}^{\textnormal{step Bernoulli}}
		\Bigl[
		\frac{1}
		{(\zeta q^{\HeightFunction(N+1,T)};q)_{\infty}}
		\Bigr]
		=\EE_{\WhittakerMeas_{(a_1,\ldots,a_N);\bar\rho(N,T)}}
		\Bigl[
		\frac1{(\zeta q^{\la_{N}};q)_{\infty}}
		\Bigr].
	\end{equation*}
	Since the $q$-Laplace transform $\EE (\zeta q^{X};q)_{\infty}$ (viewed as a function of $\zeta\in\C\setminus\R_{>0}$) determines the distribution of a random variable $X\in\Z_{\ge0}$ (e.g., see \cite[Proposition 3.1.1]{BorodinCorwin2011Macdonald}), we get the desired equality of distributions.
\end{proof}

In the next section we present an independent probabilistic mechanism explaining the equality in distribution in \Cref{cor:matching_Bernoulli}.

% subsection matching_with_q_whittaker_measures (end)

% section action_of_q_whittaker_difference_operators_on_the_higher_spin_six_vertex_model (end)

% \end{comment}

\section{$q$-TASEP~/~vertex model coupling} % (fold)
\label{sec:capling_with_q_tasep}

In this section we mainly consider the stochastic higher spin six vertex model with the step Bernoulli boundary condition (as opposed to the more general step boundary condition, see \Cref{sub:step_and_bernoulli_boundary_conditions}). To indicate this, we will denote the corresponding height function (viewed as a collection of random variables indexed by points of the quadrant) by $\HeightFunctionBernoulli(N,T)$.

\subsection{Discrete time $q$-TASEPs} % (fold)
\label{sub:discrete_time_q_taseps}

Here we briefly recall the definitions of the Bernoulli and the geometric $q$-TASEPs from \cite{BorodinCorwin2013discrete}, and outline their connection to the $q$-Whittaker measures, as well as discuss their other properties. Statements formulated here (\Cref{prop:qTASEP_moments,prop:commute}) can be readily deduced from the existing literature, so we only give ideas of the proofs.

We need the \emph{$q$-deformation of the truncated geometric distribution}:
\begin{equation}\label{q_geom_distr}
	\pgeom_{m,\al}(j):=\al^{j}(\al;q)_{m-j}\frac{(q;q)_m}{(q;q)_j(q;q)_{m-j}},\qquad j=0,1,\ldots,m.
\end{equation}
Here $\al\in(0,1)$, and $m\ge0$ is arbitrary. This distribution also makes sense for $m=+\infty$;
\begin{equation}\label{q_geom_distr_infinity}
	\pgeom_{+\infty,\al}(j):=\al^{j}(\al;q)_{\infty}\frac{1}{(q;q)_{j}},
	\qquad j=0,1,2,\ldots.
\end{equation}
The $q$-binomial theorem \cite[(1.3.2)]{GasperRahman} implies that both \eqref{q_geom_distr} and \eqref{q_geom_distr_infinity} are probability distributions in $j$.

The geometric and Bernoulli $q$-TASEPs are discrete time Markov processes on the space of particle configurations in $\Z$ (in which at most one particle is allowed per site). The particles jump only to the right (but not necessarily by $1$) and preserve their order. For our purposes it suffices to consider processes with only finitely many particles (say, $L$). Denote them by $x_1> \ldots>x_L$. We will always assume that the processes start from the \emph{step initial configuration}: $x_i(0)=-i$, $i=1,\ldots,L$. 

\begin{definition}\label{def:geom}
	Let $a_1,\ldots,a_L>0$ and $\al_1,\al_2,\ldots>0$ be such that $a_i\al_j<1$ for all $i,j$. Under the \emph{geometric $q$-TASEP} with rates $a_i$ and parameters $\al_j$, at each increment $\ttime\to \ttime+1$ of the discrete time $\ttime\ge0$, each particle $x_i$ independently jumps to the right by $j\ge0$ with probability $\pgeom_{x_{i-1}(\ttime)-x_i(\ttime)-1,a_i\al_{\ttime+1}}(j)$, where, by agreement, $x_0\equiv+\infty$. Let us denote the corresponding Markov transition matrix (with rows and columns indexed by $L$-particle configurations in~$\Z$) by $\mathbf{G}_{\al_{\ttime+1}}$.
\end{definition}
\begin{definition}\label{def:Bernoulli}
	Let $a_1,\ldots,a_L>0$ and $\be_1,\be_2,\ldots>0$. The update at the time increment $\ttime\to\ttime+1$, $\ttime\ge0$, of the \emph{Bernoulli $q$-TASEP} with rates $a_i$ and parameters $\be_j$ looks as follows. First, the particle $x_1$ jumps to the right by one with the probability $a_1\be_{\ttime+1}/(1+a_1\be_{\ttime+1})$, or stays put with the complementary probability $1/(1+a_1\be_{\ttime+1})$. Sequentially for each $i=2,\ldots,L$, if the particle $x_{i-1}$ has jumped by one (i.e., $x_{i-1}(\ttime+1)=x_{i-1}(\ttime)+1$), then the particle $x_i$ jumps by one with the probability $a_i\be_{\ttime+1}/(1+a_i\be_{\ttime+1})$ or stays put with the complementary probability. Otherwise if $x_{i-1}(\ttime+1)=x_{i-1}(\ttime)$, then the particle $x_i$ jumps by one with the probability $(1-q^{x_{i-1}(\ttime)-x_i(\ttime)-1})a_i\be_{\ttime+1}/(1+a_i\be_{\ttime+1})$ or stays put with the complementary probability. Denote the corresponding Markov transition matrix by $\mathbf{B}_{\be_{\ttime+1}}$.
\end{definition}

Note that the update rule in the geometric $q$-TASEP is \emph{parallel}, while in the Bernoulli $q$-TASEP it is \emph{sequential}, with interaction propagating from right to left. Specializing $a_j\equiv 1$, we obtain the Markov transition matrices $\mathbf{G}^\circ_\al$ and $\mathbf{B}^\circ_\be$ from \Cref{sub:intro_coupling} in the Introduction.

\begin{remark}
	In suitable limits to continuous time (involving taking small parameters $\al_j$ or $\be_j$, respectively), both the geometric and the Bernoulli $q$-TASEPs give rise to the \emph{continuous time} (also called \emph{Poisson}) $q$-TASEP introduced in \cite{SasamotoWadati1998}, \cite{BorodinCorwin2011Macdonald} (see also \cite{BorodinCorwinSasamoto2012}). While the latter process should appear (similarly to \Cref{thm:capling} below) in connection with suitable degenerations of the higher spin six vertex model (e.g., when one rescales the vertical dimension of the quadrant from discrete to continuous), here we will not formulate the corresponding results.
\end{remark}
\begin{definition}\label{def:mixed_qTASEP}
	Let $L\ge1$, $N\ge1$, $T\ge0$, and the parameters 
	\begin{equation}\label{mixed_qTASEP_parameters}
		(a_1,\ldots,a_L),\qquad
		(\nua_2,\ldots,\nua_N)=\Bigl(\frac{\nu_2}{a_2},
		\ldots,\frac{\nu_N}{a_N}
		\Bigr),\qquad
		(u_1,\ldots,u_T)
	\end{equation}
	satisfy \eqref{vertex_model_parameters} and \eqref{condition_on_parameters_for_Whittaker_link}. Define the random $L$-particle configuration 
	\begin{equation}\label{mixed_qTASEP}
		\vec\x(N,T)=\bigl\{\x_1(N,T)>\x_2(N,T)> \ldots>\x_L(N,T)\bigr\}
	\end{equation}
	in $\Z$ as follows. Starting from the $L$-particle step initial configuration, perform $N-1$ moves\footnote{We use the word ``move'' to refer to transitions of discrete-time Markov chains during a single time increment, and reserve the word ``step'' for boundary conditions in vertex models and initial conditions in $q$-TASEPs.} $\mathbf{G}_{\nua_2},\ldots,\mathbf{G}_{\nua_N}$ of the geometric $q$-TASEP, and $T$ moves $\mathbf{B}_{-u_1},\ldots,\mathbf{B}_{-u_T}$ of the Bernoulli $q$-TASEP. For both processes, $a_1,\ldots,a_L$ are the particle rates as in \Cref{def:geom,def:Bernoulli}.
\end{definition}

One can think that \eqref{mixed_qTASEP} is a configuration of a mixed geometric/Bernoulli discrete time $q$-TASEP at time $\ttime=N+T-1$. As follows from the next proposition, the order of performing the geometric and Bernoulli moves in \Cref{def:mixed_qTASEP} does not affect the distribution of $\vec\x(N,T)$. Indeed, this is because the $q$-Whittaker measure depends on the parameters $\al_i$ and $\be_j$ of its specialization $\rho$ in a multiplicative way, cf. \eqref{Pi_definition}. See also \Cref{prop:commute} below for a stronger statement.

\begin{proposition}\label{prop:qTASEP_moments}
	The random variable $\x_L(N,T)+L$ has the same distribution as the last component $\la_L$ of the random partition under the $q$-Whittaker measure $\WhittakerMeas_{(a_1,\ldots,a_L);\bar\rho(N,T)}$, where the specialization $\bar\rho(N,T)$ is defined by \eqref{matching_Whittaker_parameters_Bernoulli}. 

	Consequently, for any $\ell\ge1$ the expectation $\EE_{\textnormal{$q$-TASEP}}[q^{\ell(\x_L(N,T)+L)}]$ under the distribution of \Cref{def:mixed_qTASEP} is given by an $\ell$-fold nested contour integral as in the right-hand side of \eqref{qWhit_moments_formula} with parameters corresponding to $\WhittakerMeas_{(a_1,\ldots,a_L);\bar\rho(N,T)}$.
\end{proposition}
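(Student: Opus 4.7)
The plan is to reduce the statement to the established matching of the mixed $q$-TASEP with a $q$-Whittaker process, using the $q$-deformed RSK-type Markov dynamics. Concretely, I would lift each move of the mixed $q$-TASEP to a Markov dynamics on interlacing sequences of partitions $(\la^{(1)},\ldots,\la^{(L)})$ with $\la^{(j)}\in\Part_j$, such that the identification $\x_j = \la^{(j)}_j - j$ recovers the $q$-TASEP on the diagonal components. The step initial configuration $\x_i(0)=-i$ corresponds to starting from the sequence of empty partitions.

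Next I would invoke the explicit form of these lifted dynamics from \cite{MatveevPetrov2014} (building on \cite{OConnellPei2012,BorodinPetrov2013NN} and the continuous-time case in \cite[Sec.\;3.3]{BorodinCorwin2011Macdonald}). The relevant statements are: a geometric $q$-TASEP move $\mathbf{G}_\al$ with parameter $\al$ lifts to a Markov operator that takes the $q$-Whittaker process with $P$-variables $(a_1,\ldots,a_L)$ and dual specialization $\rho$ to the $q$-Whittaker process with the same $P$-variables and specialization $\rho$ extended by a new $\al$-type parameter equal to $\al$; a Bernoulli move $\mathbf{B}_\be$ acts analogously, extending $\rho$ by a new $\be$-type parameter equal to $\be$ (in the sense of \eqref{Pi_definition}). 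These two facts are reflections of the multiplicative dependence of the Cauchy kernel $\Pi_{\WhittakerMeas}$ on the specialization.

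Applying this inductively to the moves prescribed in \Cref{def:mixed_qTASEP}, the distribution of $(\la^{(1)},\ldots,\la^{(L)})$ obtained from the $N-1$ geometric moves $\mathbf{G}_{\nua_2},\ldots,\mathbf{G}_{\nua_N}$ and the $T$ Bernoulli moves $\mathbf{B}_{-u_1},\ldots,\mathbf{B}_{-u_T}$ is precisely the $q$-Whittaker process with $P$-variables $(a_1,\ldots,a_L)$ and dual specialization whose only nonzero parameters are $(\al_1,\ldots,\al_{N-1})=(\nua_2,\ldots,\nua_N)$ and $(\be_1,\ldots,\be_T)=(-u_1,\ldots,-u_T)$ — that is, $\bar\rho(N,T)$. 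Marginalizing to $\la^{(L)}$ yields the $q$-Whittaker measure $\WhittakerMeas_{(a_1,\ldots,a_L);\bar\rho(N,T)}$, and the equality in distribution $\x_L(N,T)+L\stackrel{d}{=}\la_L$ follows. The $q$-moment formula is then an immediate application of \Cref{prop:qWhit_moments}.

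The main obstacle will be careful bookkeeping rather than any conceptual difficulty. Specifically, one must verify that the parametrizations of the jump rules in \Cref{def:geom,def:Bernoulli} — the $q$-geometric distribution $\pgeom_{m,a_i\al_{\ttime+1}}$ and the sequential Bernoulli probability $a_i\be_{\ttime+1}/(1+a_i\be_{\ttime+1})$ — match the conventions of \cite{MatveevPetrov2014} under the sign identification $\be_i=-u_i$ (needed because the $u_i$ are negative while the $\be$-parameters in a $q$-Whittaker nonnegative specialization are nonnegative). Once these conventions are pinned down, the deduction is direct, and the order-independence claim that precedes the proposition is a consequence of the symmetry of $\Pi_{\WhittakerMeas}$ in its specialization parameters (to be upgraded to commutativity of the transition matrices themselves in \Cref{prop:commute}).
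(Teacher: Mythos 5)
Your proposal is correct and coincides with the second of the two approaches the paper outlines in its ``Idea of proof'' for this proposition: lifting the mixed $q$-TASEP via the column $q$-randomized RSK dynamics of \cite{MatveevPetrov2014} to a $q$-Whittaker process with $P$-variables $(a_1,\ldots,a_L)$ and dual specialization $\bar\rho(N,T)$, using the marginal Markov property of $\la^{(j)}_j - j$, and then reading off the $q$-Whittaker measure marginal for $\la^{(L)}_L$. The bookkeeping concerns you raise (sign conventions $\be_i=-u_i$, matching the jump rules of Definitions \ref{def:geom} and \ref{def:Bernoulli} to the RSK conventions) are exactly the details the paper also delegates to \cite{MatveevPetrov2014}.
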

\begin{proof}[Idea of proof]
	There are two ways to establish this statement. The first approach \cite{BorodinCorwin2013discrete} is via \emph{Markov duality}, which allows to write a closed system of evolution equations for the quantities $f_{N,T}(L_1,\ldots,L_\ell)=\EE_{\textnormal{$q$-TASEP}}\bigl[\prod_{j=1}^{\ell}q^{\x_{L_{j}}(N,T)+L_{j}}\bigr]$ using the very definition of the geometric or Bernoulli $q$-TASEP moves. Here by evolution equations we mean the ones expressing $f_{N+1,T}(L_1,\ldots,L_\ell)$ or $f_{N,T+1}(L_1,\ldots,L_\ell)$, respectively, as linear combinations of $f_{N,T}(L_1',\ldots,L_\ell')$ over some $L_i'$. The evolution equations can then be solved using a Bethe ansatz type approach in terms of nested contour integrals, which gives the second claim of the proposition. Comparing these expressions with the ones coming from the $q$-Whittaker measures then yields the first claim. Note that because $0<q<1$ and the random variables we deal with belong to $\Z_{\ge0}$, the equality of all $q$-moments implies the equality of the distributions.

	The second approach does not involve nested contour integrals, and first establishes the claim about the equality of distributions of $\x_L(N,T)+L$ and $\la_L$. This argument employs the \emph{column $q$-randomized Robinson--Schensted--Knuth (RSK) correspondences} of \cite{MatveevPetrov2014} (see also \cite[Section 3.3]{BorodinCorwin2011Macdonald}, \cite{OConnellPei2012}, \cite{BorodinPetrov2013NN} for simpler constructions working for the continuous time $q$-TASEP). In our context, the $q$-randomized RSKs provide Markov transition matrices (geometric or Bernoulli) mapping the $q$-Whittaker process $\WhittakerProc_{(a_1,\ldots,a_L);\bar\rho(N,T)}$ \eqref{qWhittaker_process} to $\WhittakerProc_{(a_1,\ldots,a_L);\bar\rho(N+1,T)}$ or $\WhittakerProc_{(a_1,\ldots,a_L);\bar\rho(N,T+1)}$, respectively. Under these Markov moves the evolution of the last components $(\la^{(1)}_{1},\ldots,\la^{(L)}_{L})$ of the random partitions is \emph{marginally Markovian}. Moreover, these geometric and Bernoulli moves at the level of $q$-Whittaker processes lead (via the identification $\la^{(i)}_i-i=x_i$) to the $q$-TASEP moves $\mathbf{G}_{\nua_{N+1}}$ and $\mathbf{B}_{-u_{T+1}}$, respectively. By recalling that the distribution of each partition $\la^{(j)}$ is a $q$-Whittaker measure, we get the first claim. \Cref{prop:qWhit_moments} then implies the second claim. 
\end{proof}

\begin{remark}
	The first approach to \Cref{prop:qTASEP_moments} via duality allows one to obtain nested contour integral formulas for $q$-moments of the processes with initial conditions other than the step initial condition. For example, see \cite[Theorem 2.11]{BorodinCorwinSasamoto2012} for the half stationary and \cite{BorodinCorwinPetrovSasamoto2013} for general initial conditions in the continuous time $q$-TASEP (the latter formulas are not as explicit as the ones for concrete initial conditions).
\end{remark}
\begin{proposition}\label{prop:commute}
	Fix particle rates $a_1,\ldots,a_L$. Then for any $\al_{1,2},\be_{1,2}>0$ such that $\al_{1,2} a_i<1$, $i=1,\ldots,L$, the Markov transition matrices $\mathbf{G}_{\al_{1}}$, $\mathbf{G}_{\al_{2}}$, $\mathbf{B}_{\be_{1}}$, and $\mathbf{B}_{\be_{1}}$ of \Cref{def:geom,def:Bernoulli} commute with each other.
\end{proposition}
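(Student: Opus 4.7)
The plan is to deduce Proposition \ref{prop:commute} by lifting the $q$-TASEP dynamics to Markov chains on Gelfand--Tsetlin patterns, where commutation is more transparent. Following the framework of \cite{MatveevPetrov2014} outlined in the proof sketch of \Cref{prop:qTASEP_moments}, both $\mathbf{G}_\alpha$ and $\mathbf{B}_\beta$ arise as marginals, under the projection $\mathbf{m}\colon \{\la^{(i)}_j\}_{1 \le j \le i \le L} \mapsto (\la^{(i)}_i - i)_{i=1}^L$, of column $q$-randomized RSK-type dynamics $\widetilde{\mathbf{G}}_\alpha, \widetilde{\mathbf{B}}_\beta$ on GT patterns. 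The key intertwining property is that the marginal of $\widetilde{\mathbf{G}}_\alpha$ (respectively $\widetilde{\mathbf{B}}_\beta$) under $\mathbf{m}$ is itself Markovian with transition matrix $\mathbf{G}_\alpha$ (respectively $\mathbf{B}_\beta$), for \emph{arbitrary} initial GT patterns --- not only those distributed according to a $q$-Whittaker process. This reduces the matrix commutation of Proposition \ref{prop:commute} to commutation of the lifted chains on the GT pattern space.

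The same-type commutations $[\widetilde{\mathbf{G}}_{\alpha_1}, \widetilde{\mathbf{G}}_{\alpha_2}] = 0$ and $[\widetilde{\mathbf{B}}_{\beta_1}, \widetilde{\mathbf{B}}_{\beta_2}] = 0$ should follow by a direct computation of the two-step GT update kernel, which decomposes into single-cell $q$-hypergeometric kernels that compose symmetrically in the parameter (reducing to $q$-Chu--Vandermonde-type identities applied to convolutions of $q$-truncated geometric distributions as in \eqref{q_geom_distr}). The mixed commutation $[\widetilde{\mathbf{G}}_\alpha, \widetilde{\mathbf{B}}_\beta] = 0$ is the most substantive step: I would establish it by recognizing both $\widetilde{\mathbf{G}}_\alpha$ and $\widetilde{\mathbf{B}}_\beta$ as row-to-row transfer matrices in a common vertex-model framework --- essentially a $t = 0$ degeneration of Macdonald-level insertions, or equivalently a reformulation of the higher spin six vertex model of \Cref{sec:stochastic_higher_spin_six_vertex_model} --- where commutation becomes an instance of the Yang--Baxter equation for the associated vertex weights. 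Passing commutation from $\widetilde{\mathbf{G}}_\alpha, \widetilde{\mathbf{B}}_\beta$ back to $\mathbf{G}_\alpha, \mathbf{B}_\beta$ is then immediate via the intertwining with $\mathbf{m}$.

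The main obstacle is the Yang--Baxter exchange underlying the mixed commutation: one must produce single-cell intertwiners between the update kernels of $\widetilde{\mathbf{G}}_\alpha$ and $\widetilde{\mathbf{B}}_\beta$ at each GT cell and iterate them across the whole pattern. As a fallback route that sidesteps this explicit verification, one could pass to Markov duality: both $\mathbf{G}_\alpha$ and $\mathbf{B}_\beta$ admit $q$-moment duality with backward operators $\mathcal{G}_\alpha, \mathcal{B}_\beta$ acting on the space of functions of finite index tuples $(L_1, \ldots, L_\ell)$ via the test functions $f_{\vec L}(\vec \x) = \prod_j q^{\x_{L_j} + L_j}$, as hinted at in the proof of \Cref{prop:qTASEP_moments}. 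Since these $q$-moments determine the joint law of the bounded-range random variables $q^{\x_{L_j}+L_j}$, verifying the considerably simpler identity $[\mathcal{G}_\alpha, \mathcal{B}_\beta] = 0$ on the dual side implies the sought matrix commutation on the primal side.
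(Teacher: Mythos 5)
Your primary reduction has a genuine gap. You propose to lift $\mathbf{G}_\alpha$, $\mathbf{B}_\beta$ to column $q$-randomized RSK chains $\widetilde{\mathbf{G}}_\alpha$, $\widetilde{\mathbf{B}}_\beta$ on Gelfand--Tsetlin patterns and commute the lifted chains, relying on the intertwining with the edge projection $\mathbf{m}$ to pass commutation back down. But the lifted chains do \emph{not} commute as Markov kernels on the GT-pattern space: the $q$-RSK dynamics of \cite{MatveevPetrov2014} are designed so that they map $q$-Whittaker processes to $q$-Whittaker processes, and this structure guarantees only that $\widetilde{\mathbf{G}}_\alpha\widetilde{\mathbf{B}}_\beta$ and $\widetilde{\mathbf{B}}_\beta\widetilde{\mathbf{G}}_\alpha$ produce the same \emph{distribution} when applied to a GT pattern drawn from a $q$-Whittaker process (because the target process depends multiplicatively on the specialization parameters). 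It does not give commutation starting from a delta measure at an arbitrary pattern. This is precisely the distinction the paper flags immediately after stating \Cref{prop:commute}: commutation on configurations reachable from step, which follows from \Cref{prop:qTASEP_moments}, is strictly weaker than commutation of the matrices themselves. Moreover, the lifts of $q$-TASEP to 2D dynamics are far from unique and generically non-commuting; there is no known identification of the column $q$-RSK updates with commuting row-to-row transfer matrices of a vertex model, so the proposed Yang--Baxter step at the GT level is not available.

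The paper instead works directly at the one-dimensional level: it realizes $\mathbf{G}_\alpha$ as a $v\searrow 0$ limit of the $J$-fused transfer matrix $\mathbf{T}^{\scriptscriptstyle(J)}$ (a $q$-Hahn TASEP step) of the stochastic higher spin six vertex model acting on arrow configurations in $\Z_{\ge 1}$, and $\mathbf{B}_\beta$ as the $v\searrow0$ limit of the unfused transfer matrix $\mathbf{T}_{-\beta/v}$ with the same $\upxi_i,\SP_i$. Commutation of these one-row transfer matrices is a direct consequence of the Yang--Baxter equation, and it survives the degeneration. Your duality fallback is a reasonable alternative strategy in spirit (the paper also sketches a spectral route, which is the eigenfunction face of duality), but as stated it is incomplete in the same place: establishing $[\mathcal{G}_\alpha,\mathcal{B}_\beta]=0$ on the dual side is not \emph{a priori} simpler and in practice again rests on Yang--Baxter or Bethe-ansatz integrability input, so it cannot be used as a shortcut around that structure.
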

The commutation of these matrices when applied to the step initial configuration (or, more generally, to a configuration obtained by making several geometric or Bernoulli $q$-TASEP moves from the step initial configuration) follows from \Cref{prop:qTASEP_moments}. \Cref{prop:commute} is stronger as it asserts that the matrices commute when applied to an arbitrary configuration.
\begin{proof}[Idea of proof of \Cref{prop:commute}]
	This can be deduced by spectral considerations similarly to \cite[Corollary 2.14]{CorwinPetrov2015}. Namely, one can show that the matrices $\mathbf{G}_{\al_{1,2}},\mathbf{B}_{\be_{1,2}}$ are diagonalized in the same basis. The commutation then follows from a spectral theory which was developed (in the homogeneous case $a_i\equiv \mathrm{const}$) in \cite{BorodinCorwinPetrovSasamoto2013}, \cite{BCPS2014}. In the inhomogeneous case the corresponding statements are contained in \cite[Section~7]{BorodinPetrov2016inhom}.

	Another proof not involving spectral considerations employs the commutation of transfer matrices of the inhomogeneous stochastic higher spin six vertex model, which ultimately follows from the Yang--Baxter equation. Let us explain how to turn those transfer matrices into $\mathbf{G}_\al,\mathbf{B}_\be$.

	Let $\mathbf{T}_v$ be the stochastic transfer matrix corresponding to one horizontal slice of the stochastic higher spin six vertex model with the step boundary condition. Viewed as a transition matrix in a discrete time Markov process, $\mathbf{T}_v$ corresponds to sending an $M$-arrow configuration into a random $(M+1)$-arrow one. Let us use the notation of \cite{BorodinPetrov2016inhom} for the parameters of $\mathbf{T}_v$ so that later we can specialize them into the parameters of $\mathbf{G}_\al$ or $\mathbf{B}_\be$. In more detail, $\mathbf{T}_v$ acts on arrow configurations in $\Z_{\ge1}$ and depends on $q$, the spectral parameter $v$, and $\{\upxi_i\}_{i\ge1}$, $\{\SP_i\}_{i\ge1}$ (the relation of these parameters to our usual ones is given in the end of \Cref{sub:vertex_weights}). The Yang--Baxter equation implies that $\mathbf{T}_v$ commutes with $\mathbf{T}_{v'}$ provided that $q,\upxi_i,\SP_i$, $i\ge1$, in both transfer matrices coincide. We refer to \cite{Borodin2014vertex}, \cite{CorwinPetrov2015}, \cite{BorodinPetrov2016inhom} for details.

	Turning $\mathbf{T}_v$ into $\mathbf{G}_\al$ requires the fusion procedure going back to \cite{KulishReshSkl1981yang}. Its stochastic interpretation \cite[Section 3]{CorwinPetrov2015}, \cite[Section 5]{BorodinPetrov2016inhom} implies that setting $\SP_i=v\upxi_i$, $i\ge1$, and defining $\mathbf{T}^{\scriptscriptstyle(J)}:=\mathbf{T}_{q^{J-1}v}\ldots\mathbf{T}_{qv}\mathbf{T}_{v}$ leads to the following Markov transition matrix. Acting on a configuration $\la=1^{\infty}2^{\ell_2}3^{\ell_3}\ldots$, $\mathbf{T}^{\scriptscriptstyle(J)}$ produces a random configuration $\mu=1^{\infty}2^{m_2}3^{m_3}\ldots$ with $m_i=\ell_i+H_{i-1}-H_i$, $i\ge2$, where $H_i\in\Z_{\ge0}$, $i\ge1$, are independent random variables having the distribution
	\begin{equation}\label{qHahn_distribution}
		\prob(H_i=j)=
		\varphi_{q,q^{J}\SP_i^{2},\SP_i^{2}}(j\md\ell_i),\qquad
		\varphi_{q,\eta,\zeta}(j\md \ell):=
		\eta^j\frac{(\zeta/\eta;q)_{j}(\eta;q)_{\ell-j}}{(\zeta;q)_{\ell}} \frac{(q;q)_{\ell}}{(q;q)_{j}(q;q)_{\ell-j}},
	\end{equation}
	and $\varphi_{q,\eta,\zeta}(j\md{+\infty})=\eta^j(\zeta/\eta;q)_{j}(\eta;q)_{\infty}/((q;q)_{j}(\zeta;q)_{\infty})$ for the case $i=1$ (when $\ell_1=+\infty$). The quantities $\varphi_{q,\eta,\zeta}(\cdot\md\ell)$ indeed define a probability distribution on $\{0,1,\ldots,\ell\}$ (the probability weights are nonnegative under certain restrictions on $q,\eta,\zeta$, cf. \cite[Section 6.6.1]{BorodinPetrov2016inhom}) called the \emph{$q$-deformed Beta-binomial distribution}. Notice that the probabilities in \eqref{qHahn_distribution} depend on $q^{J}$ in an analytic way, and thus we can regard $q^{J}$ in $\mathbf{T}^{\scriptscriptstyle(J)}$ as an independent parameter.

	Interpret each $\ell_i$, $i\ge1$ as a gap between consecutive particles in a configuration $x_1>x_2>\ldots$ in $\Z$ via $\ell_i=x_{i-1}-x_i-1$ (with the agreement that $\ell_1=x_0=+\infty$). The evolution of the particles $x_i$ under $\mathbf{T}^{\scriptscriptstyle(J)}$ can be described as a generalization of the geometric $q$-TASEP, when one replaces the probabilities $\pgeom_{\ell,\eta}(j)$ in \Cref{def:geom} by $\varphi_{q,\eta,\zeta}(j\md \ell)$. This generalization introduced in \cite{Povolotsky2013} is called the \emph{$q$-Hahn TASEP}. To get the geometric $q$-TASEP on the particles $x_i$, specialize the parameters further:
	\begin{equation*}
		\upxi_i^{2}=a_i,\quad i\ge1,\qquad
		q^{J}=\frac{\al}{v^{2}},\qquad v\searrow0,
	\end{equation*}
	In particular, this implies that $\SP_i\to0$, $i\ge1$. Thus, the distribution $\varphi_{q,q^{J}\SP_i^{2},\SP_i^{2}}(j\md\ell_i)$ becomes $\varphi_{q,a_i\al,0}(j\md\ell_i)=\pgeom_{\ell_i,a_i\al}(j)$, so we indeed get the geometric $q$-TASEP $\mathbf{G}_\al$ with particle rates $\{a_i\}$. 

	The commutation of transfer matrices implies that $\mathbf{T}^{\scriptscriptstyle(J_1)}\mathbf{T}^{\scriptscriptstyle(J_2)}=\mathbf{T}^{\scriptscriptstyle(J_2)}\mathbf{T}^{\scriptscriptstyle(J_1)}$, and this commutation also holds when applied to configurations of the form $1^{\infty}2^{\ell_2}3^{\ell_3}\ldots$ (simply take the limit $\ell_1\to+\infty$). This shows that the geometric $q$-TASEP transition matrices $\mathbf{G}_{\al_1}$, $\mathbf{G}_{\al_2}$ commute with each other.

	It remains to explain how to obtain the Bernoulli $q$-TASEP transition matrix $\mathbf{B}_\be$ from the transfer matrices. This can be achieved without the fusion. Consider the transfer matrix $\mathbf{T}_{-{\be}/{v}}$, where $\SP_i=v\upxi_i$ and $\upxi_i^{2}=a_i$, $i\ge1$. Because of this choice of $\upxi_i,\SP_i$, such $\mathbf{T}_{-{\be}/{v}}$ commutes with the above $v$-dependent transfer matrices (before the limit $v\searrow0$). Next, one can readily check that sending $v\searrow0$ and interpreting arrow configurations as gaps in the same way as above, we get the transition matrix $\mathbf{B}_\be$. This implies the remaining commutation statements.
\end{proof}

% subsection discrete_time_q_taseps (end)

\subsection{$q$-TASEP~/~vertex model coupling for vertex weights} % (fold)
\label{sub:capling}

\Cref{prop:qTASEP_moments} with $L=N$ together with \Cref{cor:matching_Bernoulli} implies that the random variables $\x_N(N,T)+N$ and $\HeightFunctionBernoulli(N+1,T)$ have the same distribution. Here and in \Cref{sub:capling_relating_joint_distributions} below we present an independent probabilistic explanation of this by constructing an explicit \emph{coupling} between these random variables. By this we mean constructing a probability space on which both  $\x_N(N,T)+N$ and $\HeightFunctionBernoulli(N+1,T)$ are defined such that $\x_N(N,T)+N=\HeightFunctionBernoulli(N+1,T)$. In fact, our coupling also extends to joint distributions of a certain type. We begin with matching local update rules in both models.

\begin{definition}
	\label{def:main_capling_construction}
	Fix the number of particles $L\ge1$, their rates $a_1,\ldots,a_L>0$, and $\al,\be>0$ such that $\al a_i<1$ for all $i$. Fix an $L$-particle configuration $\vec\xhs$. Let the random configuration $\vec\yhs$ be obtained from $\vec\xhs$ by applying the Bernoulli $q$-TASEP transition matrix $\mathbf{B}_{\be}$. Similarly, let the random configuration $\xhspvec$ be obtained from $\vec\xhs$ by applying $\mathbf{G}_{\al}$. We assume that the randomness in $\mathbf{G}_{\al}$ is independent from that in $\mathbf{B}_{\be}$. 

	Fix $m\in\{1,\ldots,L\}$. Let $\yhsp_m\in\Z$ be a random variable constructed given $\xhs_{m-1},\yhs_{m-1}$, and $\xhsp_m$ as follows:
	\begin{equation}\label{main_capling_construction}
		\begin{split}
			\prob(\yhsp_m=\xhsp_m)&:=
			\LLL_{-\be,a_m,\al a_m}
			(\xhs_{m-1}-\xhsp_{m}-1,\yhs_{m-1}-\xhs_{m-1};
			\yhs_{m-1}-\xhsp_{m}-1,0
			);
			\\
			\prob(\yhsp_m=\xhsp_m+1)&:=
			\LLL_{-\be,a_m,\al a_m}
			(\xhs_{m-1}-\xhsp_{m}-1,\yhs_{m-1}-\xhs_{m-1};
			\yhs_{m-1}-\xhsp_{m}-2,1
			).
		\end{split}
	\end{equation}
	In other words, using $i_1=\xhs_{m-1}-\xhsp_{m}-1$ and $j_1=\yhs_{m-1}-\xhs_{m-1}$ we sample $(i_2,j_2)$ according to the vertex weight $\LLL_{-\be,a_m,\al a_m}$ defined in \Cref{sub:vertex_weights}, and then interpret $j_2$ as $\yhsp_m-\xhsp_m$ (recall that $i_2+j_2=i_1+j_1$). We agree that when $m=1$, $\xhs_{m-1}=\yhs_{m-1}=+\infty$, while $\yhs_{m-1}-\xhs_{m-1}$ can be set to either~$0$ or~$1$ (the corresponding vertex weight does not depend on this choice). See \Cref{fig:vertex_sampling}.
\end{definition}

\begin{figure}[htbp]
	\begin{tikzpicture}
		[scale=1,ultra thick]
		\tikzstyle{myblock} = [draw=black, fill=white, line width=1, minimum width=1em, minimum height=1em, rectangle, rounded corners, text centered]
		\tikzstyle{myroundblock} = [draw=black, fill=white, line width=1, minimum width=1em, minimum height=1em, circle, rounded corners, text centered]
		\def\d{.1}
		\def\h{9}
		\def\v{1.3}
		\node[myblock] (x) at (0,0) {\begin{tikzpicture} [scale=1.5,thick] \draw[->] (-.8,0)--(1.5,0); \draw [fill,red] (1/5,0) circle(3pt);\foreach \pt in {4,1,-2} {\draw[fill] (\pt/5,0) circle(2pt);} \foreach \pt in {-3,...,6} {\draw (\pt/5,0) circle(2pt);}  \end{tikzpicture}};
		\node[myblock] (y) at (0,2*\v) {\begin{tikzpicture} [scale=1.5,thick] \draw[->] (-.8,0)--(1.5,0); \draw [fill,red] (1/5,0) circle(3pt); \foreach \pt in {5,1,-1} {\draw[fill] (\pt/5,0) circle(2pt);} \foreach \pt in {-3,...,6} {\draw (\pt/5,0) circle(2pt);}  \end{tikzpicture}};
		\node[myblock] (xp) at (\h,0) {\begin{tikzpicture} [scale=1.5,thick] \draw[->] (-.8,0)--(1.5,0); \draw [fill,red] (-1/5,0) circle(3pt); \foreach \pt in {6,3,-1 } {\draw[fill] (\pt/5,0) circle(2pt);} \foreach \pt in {-3,...,6} {\draw (\pt/5,0) circle(2pt);}  \end{tikzpicture}};
		\node [anchor=west] at (-1.8,-.57) {$\vec\xhs$};
		\node [anchor=west] at (-1.8,2*\v-.57) {$\vec\yhs$};
		\node [anchor=west] at (\h-1.8,-.57) {$\xhspvec$};
		\node [anchor=west] at (\h-1.8,2*\v-.57) {$\yhsp_m=-1$};
		\draw[line width=3pt,->] (x)--(y) node [midway,xshift=-13] {$\mathbf{B}_{\be}$};
		\draw[line width=3pt,->] (x)--(xp) node [midway,yshift=-10] {$\mathbf{G}_{\al}$};
		\node[myblock] (yp) at (\h,2*\v) {\begin{tikzpicture} [scale=1.5,thick] \draw[->] (-.8,0)--(1.5,0); \draw [fill,red] (0/5,0) circle(3pt); \foreach \pt in {0} {\draw[fill] (\pt/5,0) circle(2pt);} \foreach \pt in {-3,...,6} {\draw (\pt/5,0) circle(2pt);}  \end{tikzpicture}};
		\node[myblock] (mp) at (\h/2+.5,1.2*\v) {$\LLL_{-\be,a_m,\al a_m}$};
		\draw[->,densely dashed] (x)--([yshift=-4]mp.west) node [midway,yshift=10,xshift=-8] {$\xhs_{m-1}=0$};
		\draw[->,densely dashed] (xp)--([xshift=10]mp.south) node [midway,xshift=-20,yshift=-8] {$\xhsp_{m}=-2$};
		\draw[->,densely dashed] (y)--([yshift=4]mp.west) node [midway,yshift=11,xshift=10] {$\yhs_{m-1}=0$};
		\draw[line width=3pt,->] (mp)--(yp.west);
	\end{tikzpicture}
	\caption{Sampling of $\yhsp_{m}$ given $\xhs_{m-1},\yhs_{m-1}$, and $\xhsp_m$ in \Cref{def:main_capling_construction} (these four particles are highlighted). The transition displayed corresponds to the vertex $(1,0;0,1)$, cf. \Cref{fig:vertex_weights_stoch}.}
	\label{fig:vertex_sampling}
\end{figure}

\begin{definition}\label{def:main_capling_construction_2}
	In the setting of \Cref{def:main_capling_construction}, let us define two other random configurations. Let $\yhsgbvec$ be obtained from the configuration $\xhspvec$ by applying the Bernoulli $q$-TASEP move $\mathbf{B}_{\be}^{\GB{.12}}$, and let $\yhsbgvec$ be obtained from $\vec \yhs$ by applying the geometric $q$-TASEP move $\mathbf{G}_{\al}^{\BG{.12}}$. We denote these operations by $\mathbf{B}_{\be}^{\GB{.12}}$, $\mathbf{G}_{\al}^{\BG{.12}}$ to emphasize that they are independent from the ones in \Cref{def:main_capling_construction} (and from each other). See \Cref{fig:qTASEPs_sampling}. Thus, $\yhsgbvec$ depends on $\vec\xhs,\xhspvec$ but not on $\vec\yhs$, and similarly $\yhsbgvec$ depends on $\vec\xhs,\vec\yhs$ but not on $\xhspvec$. By \Cref{prop:commute}, $\yhsgbvec$ and $\yhsbgvec$ have the same distribution. 
\end{definition}

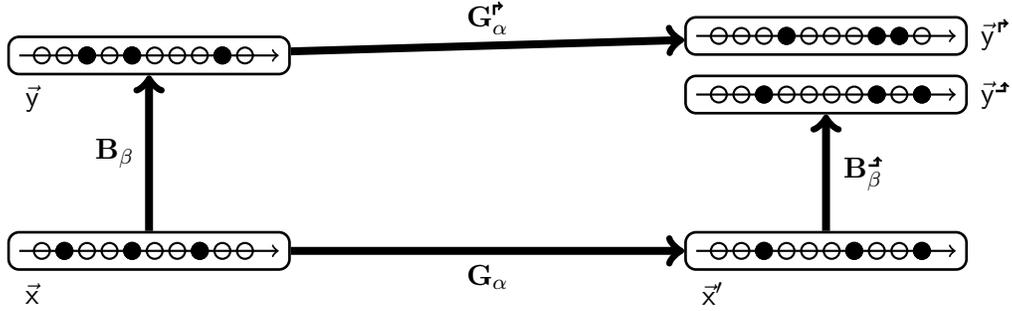
\begin{figure}[htbp]
	\begin{tikzpicture}
		[scale=1,ultra thick]
		\tikzstyle{myblock} = [draw=black, fill=white, line width=1, minimum width=1em, minimum height=1em, rectangle, rounded corners, text centered]
		\def\d{.1}
		\def\h{9}
		\def\v{1.3}
		\node[myblock] (x) at (0,0) {\begin{tikzpicture} [scale=1.5,thick] \draw[->] (-.8,0)--(1.5,0);\foreach \pt in {4,1,-2} {\draw[fill] (\pt/5,0) circle(2pt);} \foreach \pt in {-3,...,6} {\draw (\pt/5,0) circle(2pt);}  \end{tikzpicture}};
		\node[myblock] (y) at (0,2*\v) {\begin{tikzpicture} [scale=1.5,thick] \draw[->] (-.8,0)--(1.5,0); \foreach \pt in {5,1,-1} {\draw[fill] (\pt/5,0) circle(2pt);} \foreach \pt in {-3,...,6} {\draw (\pt/5,0) circle(2pt);}  \end{tikzpicture}};
		\node[myblock] (xp) at (\h,0) {\begin{tikzpicture} [scale=1.5,thick] \draw[->] (-.8,0)--(1.5,0); \foreach \pt in {6,3,-1} {\draw[fill] (\pt/5,0) circle(2pt);} \foreach \pt in {-3,...,6} {\draw (\pt/5,0) circle(2pt);}  \end{tikzpicture}};
		\node [anchor=west] at (-1.8,-.57) {$\vec\xhs$};
		\node [anchor=west] at (-1.8,2*\v-.57) {$\vec\yhs$};
		\node [anchor=west] at (\h-1.8,-.57) {$\xhspvec$};
		\draw[line width=3pt,->] (x)--(y) node [midway,xshift=-13] {$\mathbf{B}_{\be}$};
		\draw[line width=3pt,->] (x)--(xp) node [midway,yshift=-10] {$\mathbf{G}_{\al}$};
		\node[myblock] (ygb) at (\h,1.6*\v) {\begin{tikzpicture} [scale=1.5,thick] \draw[->] (-.8,0)--(1.5,0); \foreach \pt in {6,4,-1} {\draw[fill] (\pt/5,0) circle(2pt);} \foreach \pt in {-3,...,6} {\draw (\pt/5,0) circle(2pt);}  \end{tikzpicture}};
		\node[myblock] (ybg) at (\h,2.2*\v) {\begin{tikzpicture} [scale=1.5,thick] \draw[->] (-.8,0)--(1.5,0); \foreach \pt in {5,4,0} {\draw[fill] (\pt/5,0) circle(2pt);} \foreach \pt in {-3,...,6} {\draw (\pt/5,0) circle(2pt);}  \end{tikzpicture}};
		\draw[line width=3pt,->] (xp)--(ygb) node [midway,xshift=14] {$\mathbf{B}_{\be}^{\GB{.12}}$};
		\draw[line width=3pt,->] (y)--(ybg) node [midway,yshift=10] {$\mathbf{G}_{\al}^{\BG{.12}}$};
		\node [anchor=west] at (\h+1.9,1.6*\v) {$\yhsgbvec$};
		\node [anchor=west] at (\h+1.9,2.2*\v) {$\yhsbgvec$};
	\end{tikzpicture}
	\caption{Random configurations of \Cref{def:main_capling_construction_2} obtained from $\vec\xhs$ by applying the Bernoulli and the geometric $q$-TASEP moves in two different orders.}
	\label{fig:qTASEPs_sampling}
\end{figure}

\begin{proposition}\label{prop:main_capling_prop}
	Fix $L\ge1$ and $m\in\{1,\ldots,L\}$. Given the particle configuration $\vec\xhs$, the joint distribution of $(\yhs_{m-1},\yhsp_m)$ is the same as of $(\yhs_{m-1},\yhsbg_m)$.
\end{proposition}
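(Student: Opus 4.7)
The plan is to verify equality of conditional distributions given the Bernoulli jump $J_B := \yhs_{m-1} - \xhs_{m-1} \in \{0,1\}$. Because the Bernoulli move $\mathbf{B}_\be$ producing $\yhs_{m-1}$ is literally the same random operation on both sides of the claimed identity, it suffices to show that for each $b \in \{0,1\}$ and each $k \in \Z_{\ge 0}$,
\begin{equation*}
\prob(\yhsp_m - \xhs_m = k \mid \vec\xhs,\ J_B = b) = \prob(\yhsbg_m - \xhs_m = k \mid \vec\xhs,\ J_B = b).
\end{equation*}
Set $\ell := \xhs_{m-1} - \xhs_m - 1$, the pre-update gap. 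I would write both sides as explicit finite sums. On the left, $\xhsp_m - \xhs_m$ is independent of $\yhs_{m-1}$ with law $\pgeom_{\ell, a_m\al}$, and then $\yhsp_m - \xhsp_m \in \{0,1\}$ is drawn from $\LLL_{-\be, a_m, \al a_m}$ with $i_1 = \ell - (\xhsp_m - \xhs_m)$ and $j_1 = b$. On the right, $J_B' := \yhs_m - \xhs_m \in \{0,1\}$ is drawn from the Bernoulli-TASEP conditional rule, giving $\prob(J_B' = 1 \mid J_B = 1) = \tilde\be$ and $\prob(J_B' = 1 \mid J_B = 0) = \tilde\be (1 - q^\ell)$ with $\tilde\be := a_m\be/(1+a_m\be)$; then $\yhsbg_m - \yhs_m$ is $\pgeom_{\ell + b - J_B', a_m\al}$-distributed.

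Substituting $u = -\be$, $a = a_m$, $\nu = \al a_m$ into the table of \Cref{fig:vertex_weights_stoch} (so $1-au = 1+a_m\be$), the claim reduces to two identities, one for $b=0$ and one for $b=1$; in each case only two summands appear on each side. Using $\pgeom_{n,\eta}(j) = \eta^j (\eta;q)_{n-j}(q;q)_n/((q;q)_j(q;q)_{n-j})$, the required equality collapses, via $(x;q)_{n+1} = (1-xq^n)(x;q)_n$, $(q;q)_n = (1-q^n)(q;q)_{n-1}$, and the trivial $\eta q^{\ell-k} + (1 - \eta q^{\ell-k}) = 1$, to the same rational expression on both sides. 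For instance, in the $b=0$ case one verifies
\begin{equation*}
\pgeom_{\ell,a_m\al}(k)\,q^{\ell-k}(1-q^k) + \pgeom_{\ell,a_m\al}(k-1)(1-q^{\ell-k+1}) = (1-q^\ell)\,\pgeom_{\ell-1,a_m\al}(k-1),
\end{equation*}
a one-line manipulation after canceling the common prefactor $\eta^{k-1}(q;q)_\ell(\eta;q)_{\ell-k}/((q;q)_{k-1}(q;q)_{\ell-k})$.

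Conceptually, this identity is the vertex-level Yang--Baxter intertwining between one step of $\mathbf{B}_\be$ and one step of $\mathbf{G}_\al$, with $\LLL_{-\be, a_m, \al a_m}$ serving as the local $R$-matrix; the global statement \Cref{prop:commute} is the consequence at the level of joint distributions, while the present proposition is its sample-path refinement at a single site. The main obstacle is purely bookkeeping: enumerating the four $(b, J_B')$ cases on the right and the four corresponding $(J_G, j_2)$ cases on the left, and matching them pairwise via the elementary $q$-Pochhammer identities above.
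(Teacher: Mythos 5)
Your proposal is correct and takes essentially the same route as the paper: condition on the Bernoulli jump $J_B=\yhs_{m-1}-\xhs_{m-1}\in\{0,1\}$ (the paper calls these events $I$ and $I^c$), expand both sides into two-term sums involving the vertex weights $\LLL_{-\be,a_m,\al a_m}$ and the $q$-geometric distribution, and verify the resulting rational identity; the $q$-Pochhammer reduction you sketch (after stripping the overall $1/(1+a_m\be)$ and the trivial $\be^0$ coefficient) is precisely what the paper compresses into the phrase ``a straightforward calculation shows.''
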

\begin{proof}
	Let $I$ be the event that $\yhs_{m-1}=\xhs_{m-1}$, and denote $g:=\xhs_{m-1}-\xhs_m-1$. Assume that $m\ge 2$, the argument for $m=1$ is the same after substituting $g=+\infty$. We have for any $r\ge0$:
	\begin{equation*}
		\begin{split}			
		\prob\bigl(
		\yhsp_m=\xhs_m+r\md I,\vec\xhs
		\bigr)&=
		\prob\bigl(
		\yhsp_m=\xhs_m+r,\; \xhsp_m=\xhs_m+r\md I,\vec\xhs
		\bigr)\\&\hspace{120pt}+
		\prob\bigl(
		\yhsp_m=\xhs_m+r,\; \xhsp_m=\xhs_m+r-1\md I,\vec\xhs
		\bigr)
		\\&=
		\pgeom_{g, a_m\al}(r)
		\,\LLL_{-\be,a_m,\al a_m}(g-r,0;g-r,0)
		\\&\hspace{120pt}+
		\pgeom_{g, a_m\al}(r-1)
		\,\LLL_{-\be,a_m,\al a_m}(g-r+1,0;g-r,1).
		\end{split}
	\end{equation*}
	A straightforward calculation shows that this is equal to 
	\begin{equation*}
		\begin{split}
		&=
		\frac{a_m\be(1-q^{g})}{1+a_m\be}\,
		\pgeom_{g-1, a_m\al}(r-1)
		+
		\frac{1+a_m\be q^{g}}{1+a_m\be}\,
		\pgeom_{g, a_m\al}(r)
		\\&=
		\prob(\yhs_m=\xhs_m+1\md I,\vec\xhs)\,\pgeom_{g-1, a_m\al}(r-1)
		+
		\prob(\yhs_m=\xhs_m\md I,\vec\xhs)\,\pgeom_{g, a_m\al}(r)
		\\&=
		\prob\bigl(
		\yhsbg_m=\xhs_m+r\md I,\vec\xhs
		\bigr).
		\end{split}
	\end{equation*}
	Similarly, conditioning on the complement $I^{c}$ we have
	\begin{equation*}
	\begin{split}
		\prob\bigl(
		\yhsp_m=\xhs_m+r\md I^{c},\vec\xhs
		\bigr)
		&=
		\pgeom_{g, a_m\al}(r)
		\,\LLL_{-\be,a_m,\al a_m}(g-r,1;g-r+1,0)
		\\&\hspace{100pt}+
		\pgeom_{g, a_m\al}(r-1)
		\,\LLL_{-\be,a_m,\al a_m}(g-r+1,1;g-r+1,1)
		\\&=
		\frac{a_m\be}{1+a_m\be}\,
		\pgeom_{g, a_m\al}(r-1)+
		\frac{1}{1+a_m\be}\,
		\pgeom_{g+1, a_m\al}(r)
		\\&=
		\prob\bigl(
		\yhsbg_m=\xhs_m+r\md I^{c},\vec\xhs
		\bigr),
	\end{split}
	\end{equation*}
	which establishes the claim.
\end{proof}

\begin{proposition}\label{prop:main_capling_prop2}
	Fix $L\ge1$ and $m\in\{1,\ldots,L\}$. Given the particle configuration $\vec\xhs$, the joint distribution of $(\xhsp_{m},\yhsp_m)$ is the same as of $(\xhsp_{m},\yhsgb_m)$.
\end{proposition}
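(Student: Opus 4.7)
The plan is to mirror the proof of \Cref{prop:main_capling_prop} by swapping the role of the conditioning variable from $\yhs_{m-1}$ to $\xhsp_m$. Fix $s \in \{0, 1, \ldots, g\}$ with $g := \xhs_{m-1} - \xhs_m - 1$ and let $J_s := \{\xhsp_m = \xhs_m + s\}$. Since given $\vec\xhs$ the moves $\mathbf{B}_\be$ (producing $\vec\yhs$) and $\mathbf{G}_\al$ (producing $\xhspvec$) are independent, $\yhs_{m-1}$ is conditionally independent of $\xhsp_m$ given $\vec\xhs$. Writing $p_\epsilon := \prob(\yhs_{m-1} - \xhs_{m-1} = \epsilon \mid \vec\xhs)$ and $\delta := \yhsp_m - \xhsp_m \in \{0,1\}$, \Cref{def:main_capling_construction} then gives
\[
\prob(\xhsp_m = \xhs_m + s,\; \yhsp_m = \xhs_m + s + \delta \mid \vec\xhs) = \pgeom_{g,a_m\al}(s) \sum_{\epsilon \in \{0,1\}} p_\epsilon\, \LLL_{-\be,a_m,\al a_m}(g-s,\, \epsilon;\, g-s+\epsilon-\delta,\, \delta).
\]

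For the other side, $\yhsgb_m$ is produced from $\xhspvec$ by the sequential Bernoulli move $\mathbf{B}_\be^{\GB{.12}}$, so its distribution given $J_s$ and $\vec\xhs$ is obtained by marginalizing over $\xhsp_{m-1}$ (which is independent of $\xhsp_m$ with distribution $\pgeom_{g_{m-1}, a_{m-1}\al}$, where $g_{m-1} := \xhs_{m-2} - \xhs_{m-1} - 1$) and then over the event $\{\yhsgb_{m-1} - \xhsp_{m-1} = \epsilon'\}$ using the Bernoulli update rule. The calculation is facilitated by the closed-form identity $\EE[q^Y] = 1 - \eta(1 - q^k)$ for $Y \sim \pgeom_{k,\eta}$, which follows from the $q$-binomial theorem and lets the $q^{g-s+s'}$-factor in the Bernoulli transition be integrated against the $q$-geometric weights of $\xhsp_{m-1}$ cleanly.

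I would proceed by induction on $m$. For the base case $m=1$, using $\xhs_0 = +\infty$ and $q^{+\infty} = 0$, the vertex weights $\LLL_{-\be,a_1,\al a_1}(+\infty,\cdot;\cdot,\delta)$ collapse to the simple Bernoulli transition $a_1\be/(1+a_1\be)$ for particle~$1$, which is precisely the law of $\yhsgb_1 - \xhsp_1$. For $m \ge 2$, apply the inductive hypothesis (\Cref{prop:main_capling_prop2} at index $m-1$) to identify $\prob(\yhsgb_{m-1} - \xhsp_{m-1} = \epsilon' \mid \xhsp_{m-1}, \vec\xhs)$ with the analogue of the $\sum_\epsilon p_\epsilon \LLL(\cdot)$ sum appearing on the LHS, and then verify algebraically that after the marginalization over $\xhsp_{m-1}$ the resulting expression collapses exactly to the LHS.

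The main obstacle is the algebraic bookkeeping of $q$-Pochhammer factors during the marginalization: one must combine the Bernoulli transition factor $a_m\be(1-q^{g-s+s'})/(1+a_m\be)$ with $\pgeom_{g_{m-1},a_{m-1}\al}(s')$ and the inductively-described $\tilde p_{\epsilon'}(s')$, and show that the $s'$-sum reduces to the plain vertex weight evaluated at gap $g-s$ weighted by the $\vec\xhs$-level Bernoulli marginal $p_\epsilon$. Apart from this accounting the structure of the computation parallels the ``straightforward calculation'' in the proof of \Cref{prop:main_capling_prop}, so once the closed form for $\EE[q^Y]$ is in hand the identity should follow from the same kind of cancellation used there.
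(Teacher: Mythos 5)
Your plan is correct, and it takes a genuinely different route from the paper's proof. Carrying out your marginalization over $(\xhsp_{m-1},\yhsgb_{m-1})$, converting $\yhsgb_{m-1}$ to a vertex-weight sample via the inductive hypothesis at $m-1$, and applying $\EE[q^{Y}]=1-\eta(1-q^{k})$ for $Y\sim\pgeom_{k,\eta}$, the $\epsilon'$- and $s'$-sums do collapse; the resulting algebra reduces to the single relation
\[
\prob\bigl(\yhs_{m-1}-\xhs_{m-1}=1\md\vec\xhs\bigr)
=\frac{a_{m-1}\be}{1+a_{m-1}\be}\Bigl[1-q^{g_{m-1}}+q^{g_{m-1}}\,\prob\bigl(\yhs_{m-2}-\xhs_{m-2}=1\md\vec\xhs\bigr)\Bigr],
\]
which is precisely the Bernoulli update rule for particle $m-1$ applied to $\vec\xhs$ and so holds by definition. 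Thus the cancellation you anticipated does occur and your argument closes. The paper's proof is structured quite differently: it introduces events $A^{k}_{\cdot,\cdot}$ built from $\yhsbgvec$ (the other composition order, which your proof never touches), invokes \Cref{prop:commute} to obtain the balance $\prob(A^{k}_{0,j})+\prob(A^{k}_{1,j-1})=\prob(B^{k}_{j,0})+\prob(B^{k}_{j-1,1})$, and then reduces the claim to the auxiliary identity $\be\,\prob(A^{m-1}_{0,j})-\al\,\prob(A^{m-1}_{1,j-1})=\be q^{j}\,\prob(B^{m-1}_{j,0})$, itself established by a second, nested induction. Your route is a single self-contained induction and, notably, never uses \Cref{prop:commute}. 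What the paper's version gains is the isolated structural identity linking the two diagonals of the commuting square; your approach is shorter and more elementary for the purpose of this particular proposition.
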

\begin{proof}
	Define
	\begin{equation*}
	 	p_A:=\prob\bigl(\yhsp_m=\xhsp_m+1\md \vec\xhs,\xhsp_m\bigr),\qquad
	 	p_B:=\prob\bigl(\yhsgb_m=\xhsp_m+1\md \vec\xhs,\xhsp_m\bigr).
	\end{equation*}
	We aim to show that $p_A=p_B$ by induction on $m$. Because $\yhsp_m-\xhsp_m, \yhsgb_m-\xhs_m\in\{0,1\}$ by the very	construction, this will imply the claim. 

	When $m=1$ we have:
	\begin{equation*}
		\prob\bigl(
		\xhsp_1-\xhs_1=r,\;
		\yhsp_1-\xhsp_1=1
		\md\vec\xhs\bigr)=
		\pgeom_{+\infty, a_1\al}(r)\frac{a_1\be}{1+a_1\be},
		\qquad r\in\Z_{\ge0},
	\end{equation*}
	that is, $\xhsp_1-\xhs_1\in\Z_{\ge0}$ and $\yhsp_1-\xhsp_1\in\{0,1\}$ are independent random variables which can be interpreted as jumps of the first particle under $\mathbf{G}_\al$ and $\mathbf{B}_\be$, respectively. Thus, their distribution is the same as for $(\xhsp_{1}-\xhs_1,\yhsgb_1-\xhsp_1)$. This establishes the case $m=1$.

	Let now $m\ge 2$. We need some notation. For any $k=1,\ldots,L-1$ and $j\ge0$, define events
	\begin{equation*}
		\begin{split}
			A_{0,j}^{k}&:=
			\bigl\{\yhs_k=\xhs_k,\;\yhsbg_k=\yhs_k+j\bigr\}
			,\qquad A_{1,j-1}^{k}:=
			\bigl\{\yhs_k=\xhs_k+1,\;\yhsbg_k=\yhs_k+j-1\bigr\};\\
			B_{j,0}^{k}&:=
			\bigl\{\xhsp_k=\xhs_k+j,\;\yhsgb_k=\xhsp_k\bigr\},\qquad 
			B_{j-1,0}^{k}:=
			\bigl\{\xhsp_k=\xhs_k+j-1,\;\yhsgb_k=\xhsp_k+1\bigr\}
			.
		\end{split}
	\end{equation*}
	If $j=0$, then by agreement the events $A_{1,j-1}^{k}$ and $B_{j-1,0}^{k}$ are empty. The commutation of the Bernoulli and the geometric moves (\Cref{prop:commute}) implies that 
	\begin{equation}\label{main_capling_prop2_proof1}
		\prob\bigl(A_{0,j}^{k}\bigr)+\prob\bigl(A_{1,j-1}^{k}\bigr)=
		\prob\bigl(B_{j,0}^{k}\bigr)+\prob\bigl(B_{j-1,1}^{k}\bigr).
	\end{equation}
	Next, denote 
	\begin{equation*}
		p_A^{j}:=\prob\bigl(\yhsp_m=\xhsp_m+1\md \vec\xhs,\xhsp_m,
		A_{0,j}^{m-1}\sqcup A_{1,j-1}^{m-1}\bigr),
		\qquad
		p_B^{j}:=
		\prob\bigl(\yhsgb_m=\xhsp_m+1\md \vec\xhs,\xhsp_m,
		B_{j,0}^{m-1}\sqcup B_{j-1,1}^{m-1}\bigr).
	\end{equation*}
	It suffices to show that $p_A^{j}=p_B^{j}$ because the additional conditioning is onto events of the same probability
	which partition the probability space as $j\ge0$ varies.
	
	First assume that $j=0$. Then by \Cref{def:main_capling_construction} we have
	\begin{equation*}
		p_A^{0}=\frac{\be a_m(1-q^{\xhs_{m-1}-\xhsp_m-1})}{1+\be a_m},
	\end{equation*}
	which is the same as $p_B^{0}$ because $\xhs_{m-1}=\xhsp_{m-1}$ when conditioned on $B^{m-1}_{0,0}$.

	Now let $j\ge1$. Then we can express $p_A^{j}$ and $p_B^{j}$ as follows:
	\begin{equation*}
		\begin{split}
			p_A^{j}\prob\bigl(A_{0,j}^{m-1}\sqcup A_{1,j-1}^{m-1}\bigr)&=
			\frac{a_m\be(1-q^{\xhs_{m-1}-\xhsp_m-1})}{1+a_m\be}
			\prob\bigl(A_{0,j}^{m-1}\bigr)
			+
			\frac{a_m\be+a_m\al q^{\xhs_{m-1}-\xhsp_m-1}}{1+a_m\be}
			\prob\bigl(A_{1,j-1}^{m-1}\bigr)
			;
			\\
			p_B^{j}\prob\bigl(B_{j,0}^{m-1}\sqcup B_{j-1,1}^{m-1}\bigr)&=
			\frac{a_m\be(1-q^{\xhs_{m-1}+j-\xhsp_m-1})}{1+a_m\be}
			\prob\bigl(B_{j,0}^{m-1}\bigr)
			+
			\frac{a_m\be}{1+a_m\be}
			\prob\bigl(B_{j-1,1}^{m-1}\bigr)
			.
		\end{split}
	\end{equation*}
	The coefficients in the first line come from the vertex weights according to \Cref{def:main_capling_construction}, and the ones in the second line correspond to the Bernoulli $q$-TASEP move $\mathbf{B}_{\be}^{\GB{.12}}$. We also used the fact that $\xhsp_{m-1}=\xhs_{m-1}+j$ when conditioned on $B_{j,0}^{m-1}$. 

	By \eqref{main_capling_prop2_proof1}, to prove $p_A^j=p_B^j$ it suffices to show that
	\begin{equation}\label{main_capling_prop2_proof2}
		\be \prob\bigl(A_{0,j}^{m-1}\bigr)-\al
		\prob\bigl(A_{1,j-1}^{m-1}\bigr)=\be q^{j}
		\prob\bigl(B_{j,0}^{m-1}\bigr)
	\end{equation}
	(because $m\ge2$, we could divide by $q^{\xhs_{m-1}-\xhsp_{m}-1}\ne 0$). 

	For $m=2$, the probabilities in \eqref{main_capling_prop2_proof2} can be readily written down, and it turns into
	\begin{equation*}
		\pgeom_{+\infty,a_1\al}(j)-a_1\al \pgeom_{+\infty,a_1\al}(j-1)
		=q^{j}\pgeom_{+\infty,a_1\al}(j).
	\end{equation*}
	From \eqref{q_geom_distr_infinity} we see that this holds.

	Finally, when $m\ge3$, let us show that identity \eqref{main_capling_prop2_proof2} holds when we condition the probabilities in the left-hand side on $A^{m-2}_{0,\ell}\sqcup A^{m-2}_{1,\ell-1}$, and in the right-hand side on $B^{m-2}_{\ell,0}\sqcup B^{m-2}_{\ell-1,1}$, where $\ell\in\Z_{\ge0}$ is fixed. Because the probabilities of conditions are the same by \eqref{main_capling_prop2_proof1}, this would imply \eqref{main_capling_prop2_proof2}. Denote $g:=\xhs_{m-2}-\xhs_{m-1}-1$. The conditioned version of \eqref{main_capling_prop2_proof2} is equivalent to
	\begin{equation}\label{main_capling_prop2_proof3}
	\begin{split}
		&\be \frac{1+\be a_{m-1} q^{g}}{1+\be a_{m-1}}\,
		\pgeom_{g,a_{m-1}\al}(j)
		\prob\bigl(A_{0,\ell}^{m-2}\bigr)
		+
		\be \frac{1}{1+\be a_{m-1}}\,\pgeom_{g+1,a_{m-1}\al}(j)
		\prob\bigl(A_{1,\ell-1}^{m-2}\bigr)
		\\
		&\hspace{10pt}-
		\al \frac{\be a_{m-1}(1-q^{g})}{1+\be a_{m-1}}
		\,\pgeom_{g-1,a_{m-1}\al}(j-1)
		\prob\bigl(A_{0,\ell}^{m-2}\bigr)
		-
		\frac{\be a_{m-1}}{1+\be a_{m-1}}\,
		\pgeom_{g,a_{m-1}\al}(j-1)
		\prob\bigl(A_{1,\ell-1}^{m-2}\bigr)
		\\
		&\hspace{30pt}=
		\be q^{j}
		\frac{1+\be a_{m-1}q^{g+\ell-j}}{1+\be a_{m-1}}
		\,\pgeom_{g,a_{m-1},\al}(j)
		\prob\bigl(B_{\ell,0}^{m-2}\bigr)
		+
		\be q^{j}
		\frac{1}{1+\be a_{m-1}}\,\pgeom_{g,a_{m-1},\al}(j)
		\prob\bigl(B_{\ell-1,1}^{m-2}\bigr).
	\end{split}
	\end{equation}
	(here we assumed that $\ell\ge1$, the case $\ell=0$ is similar and simpler). Rewrite the probabilities in the right-hand side of \eqref{main_capling_prop2_proof3} using \eqref{main_capling_prop2_proof1} and the induction hypothesis \eqref{main_capling_prop2_proof2} for $m-2$:
	\begin{multline*}
		\mathrm{RHS}\eqref{main_capling_prop2_proof3}=
		\bigl(\be \prob\bigl(A_{0,\ell}^{m-2}\bigr)
		-\al\prob\bigl(A_{1,\ell-1}^{m-2}\bigr)\bigr) 
		\frac{\be a_{m-1}q^{g}}{1+\be a_{m-1}}\,\pgeom_{g,a_{m-1},\al}(j)
		\\+
		\bigl(\prob\bigl(A_{0,\ell}^{m-2}\bigr)+
		\prob\bigl(A_{1,\ell-1}^{m-2}\bigr)\bigr)
		\be q^{j}
		\frac{1}{1+\be a_{m-1}}\,\pgeom_{g,a_{m-1},\al}(j).
	\end{multline*}
	Collecting the coefficients by $\prob\bigl(A_{0,\ell}^{m-2}\bigr)$ and $\prob\bigl(A_{1,\ell-1}^{m-2}\bigr)$ in both sides and simplifying, we arrive at the following identities:
	\begin{equation*}
		\begin{split}
			\pgeom_{g,a_{m-1}\al}(j)
			-
			\al {a_{m-1}(1-q^{g})}\pgeom_{g-1,a_{m-1}\al}(j-1)
			&=
			q^{j}
			\pgeom_{g,a_{m-1},\al}(j);\\
			\pgeom_{g+1,a_{m-1}\al}(j)
			-
			\al
			a_{m-1}\pgeom_{g,a_{m-1}\al}(j-1)
			&=
			-\al
			a_{m-1}q^{g}\pgeom_{g,a_{m-1},\al}(j)
			+
			q^{j}
			\pgeom_{g,a_{m-1},\al}(j).
		\end{split}
	\end{equation*}
	These identities can be readily verified using the definition \eqref{q_geom_distr}. This completes the proof of the proposition.
\end{proof}

\begin{remark}\label{rmk:square_joint_distribution_of_TASEPs_not_indep}
	One can construct a coupling between $\yhsgbvec$ and $\yhsbgvec$ (i.e., define both random configurations on the same probability space such that $\yhsgbvec=\yhsbgvec$). This coupling is not unique. The simplest such construction (going back to \cite{DiaconisFill1990}) would be to first sample, say, $\vec\yhs$ and $\yhsbgvec$, and then given $\yhsgbvec=\yhsbgvec$ sample $\xhspvec$ as a middle point in the sequence of Markov transitions $\vec\xhs\to\xhspvec\to\yhsgbvec$, cf. \Cref{fig:qTASEPs_sampling}. However, this coupling of $\yhsgbvec$ and $\yhsbgvec$ would eliminate the independence of $\vec\yhs$ and $\xhspvec$ (given $\vec\xhs$), a property which we employed in the proofs \Cref{prop:main_capling_prop,prop:main_capling_prop2}. It is not clear whether there exists a joint distribution (given $\vec\xhs$) of all three configurations $\vec\yhs,\xhspvec$, and $\yhsgbvec=\yhsbgvec$ under which $(\vec\yhs,\yhsbgvec)$ and $(\xhspvec,\yhsgbvec)$ have the same distributions as before, and, moreover, the corresponding extensions of \Cref{prop:main_capling_prop,prop:main_capling_prop2} hold. We will not discuss this question here.
\end{remark}

% subsection capling (end)

\subsection{$q$-TASEP~/~vertex model coupling along time-like paths} % (fold)
\label{sub:capling_relating_joint_distributions}

Let us return to the stochastic higher spin six vertex model with the step Bernoulli boundary condition defined in \Cref{sub:vertex_weights,sub:step_and_bernoulli_boundary_conditions}. Recall that it depends on parameters $a_i,\nu_i$, and $u_j$ satisfying \eqref{vertex_model_parameters}, where $\nu_1=0$. Also recall the notation $\nua_i=\nu_i/a_i$, $i\ge2$. The configuration of the higher spin six vertex model with the step Bernoulli boundary condition in $\Z_{\ge2}\times\Z_{\ge1}$ is completely described by the height function, which is a collection of random variables $\{\HeightFunctionBernoulli(N+1,T)\}$, where $N\in\Z_{\ge1}$, $T\in\Z_{\ge0}$. The step Bernoulli boundary condition translates into the following boundary values of the height function:
\begin{equation}\label{Bernoulli_height_function_boundary_values}
	\HeightFunctionBernoulli(N+1,0)=0,\quad N\ge1;
	\qquad \qquad
	\HeightFunctionBernoulli(2,T)=
	\mathsf{b}_1+\ldots+\mathsf{b}_T
	,\quad T\ge0
	,
\end{equation}
where $\mathsf{b}_i$ are independent Bernoulli random variables with $\prob(\mathsf{b}_i=1)=-a_1u_i/(1-a_1u_i)$.

\begin{definition}\label{def:up_right_mixed_TASEP_joint}
	Take any \emph{time-like path} $\mathcal{P}=\{(N_{\ttime},T_{\ttime})\}_{\ttime=0}^{\mathsf{M}}$ in the quadrant $\Z_{\ge1}\times\Z_{\ge0}$ defined as
	\begin{equation}\label{up_right_path}
		(N_0,T_0)=(1,0),\quad
		N_{\ttime+1}-N_{\ttime}\in\{0,1\},\quad
		T_{\ttime+1}-T_{\ttime}\in\{0,1\},\quad
		N_{\ttime+1}-N_{\ttime}+T_{\ttime+1}-T_{\ttime}=1.
	\end{equation}
	Define the mixed geometric/Bernoulli $q$-TASEP 
	$\vec\x(N_{\ttime},T_{\ttime})$, $\ttime=0,\ldots,\mathsf{M}$ (i.e., $\ttime$ plays the role of the discrete time) with particle rates $\{a_i\}$, which is started from the step initial configuration $\x_i(1,0)=-i$, $i\ge1$, and evolves as follows:
	\begin{enumerate}[\quad$\bullet$]
		\item If $T_{\ttime+1}=T_{\ttime}+1$, then the time increment $\ttime\to\ttime+1$ corresponds to the move $\mathbf{B}_{-u_{T_{\ttime+1}}}$;
		\item If $N_{\ttime+1}=N_{\ttime}+1$, then the time increment $\ttime\to\ttime+1$ corresponds to the move $\mathbf{G}_{\nua_{N_{\ttime+1}}}$.
	\end{enumerate}
	About the name ``time-like'' see the discussion in the end of \Cref{sub:intro_coupling}. We are not using the name ``up-right path'' for $\mathcal{P}$ to distinguish it from the up-right paths formed by arrows in configurations of the higher spin six vertex model (cf. \Cref{fig:HS}, left).
\end{definition}
Note that by \Cref{prop:commute} the marginal distribution of the configuration $\vec\x(N_{\mathsf{M}},T_{\mathsf{M}})$ depends on the path $\mathcal{P}$ only through its endpoint $(N_{\mathsf{M}},T_{\mathsf{M}})$. However, the joint distribution of the sequence of configurations $\{\vec\x(N_{\ttime},T_{\ttime})\}_{\ttime=0}^{\mathsf{M}}$ depends on the whole path $\mathcal{P}$.

\begin{theorem}\label{thm:capling}
	For any time-like path $\mathcal{P}$, the joint distribution of the random variables 
	\begin{equation}\label{higher_spin_up_right}
		\HeightFunctionBernoulli\bigl({\mathcal{P}}+\mathbf{e}_1\bigr):=\bigl\{\HeightFunctionBernoulli(N_{\ttime}+1,T_\ttime)\bigr\}_{\ttime=0}^{\mathsf{M}},\qquad \mathbf{e}_1:=(1,0),
	\end{equation}
	corresponding to the inhomogeneous stochastic higher spin six vertex model with the step Bernoulli boundary condition is the same as the joint distribution of the (shifted) particle locations\footnote{We use notation $\X$ instead of $\vec\x,\x$ because of the shifting of the particle coordinates.} 
	\begin{equation}\label{TASEP_up_right}
		\X({\mathcal{P}}):=\bigl\{\x_{N_{\ttime}}(N_{\ttime},T_{\ttime})+N_{\ttime}\bigr\}_{\ttime=0}^{\mathsf{M}}
	\end{equation}
	under the mixed geometric/Bernoulli $q$-TASEP described above.
\end{theorem}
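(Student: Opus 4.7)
The plan is to construct, by induction on the length $\mathsf{M}$ of $\mathcal{P}$, an explicit coupling on a common probability space under which \eqref{higher_spin_up_right} and \eqref{TASEP_up_right} are \emph{almost surely equal}. The base case $\mathsf{M}=0$ is immediate: the step initial configuration gives $\x_1(1,0)+1=0$, and the step Bernoulli boundary value \eqref{Bernoulli_height_function_boundary_values} gives $\HeightFunctionBernoulli(2,0)=0$.

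For the inductive step, the crucial observation is that the single-vertex construction in \Cref{def:main_capling_construction}, which uses the vertex weight $\LLL_{-\be,a_m,\al a_m}$, coincides with the vertex weight $\LLL_{u_T,a_m,\nu_m}$ of the higher spin six vertex model at position $(m,T)$ under the parameter identification $\be=-u_T$, $\al=\nua_m$ (so $\al a_m=\nu_m$). The Bernoulli and geometric $q$-TASEP moves prescribed in \Cref{def:up_right_mixed_TASEP_joint} are exactly $\mathbf{B}_{-u_T}$ and $\mathbf{G}_{\nua_m}$, so at each lattice point of $\mathcal{P}$ the local sampling rule \eqref{main_capling_construction} simultaneously produces one vertex of the six vertex model and one particle update of the $q$-TASEP. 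I would extend the coupling one lattice step of $\mathcal{P}$ at a time: when the new step is vertical ($T_{\ttime+1}=T_\ttime+1$) I perform $\mathbf{B}_{-u_{T_{\ttime+1}}}$ on the $q$-TASEP while simultaneously sampling the vertex weights at all vertices $(m,T_{\ttime+1})$ with $m\leq N_{\ttime}$ using \eqref{main_capling_construction}, with the required inputs $\xhs_{m-1},\yhs_{m-1},\xhsp_m$ supplied by the coupling at earlier steps; the horizontal case is analogous.

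Consistency of each extension is exactly the content of \Cref{prop:main_capling_prop,prop:main_capling_prop2}: the former shows that $\yhsp_m$ constructed via the vertex weight has the correct conditional law of a Bernoulli $q$-TASEP move applied after the previous geometric ones, and the latter handles the reverse composition. The key identification $\x_{N_\ttime}(N_\ttime,T_\ttime)+N_\ttime=\HeightFunctionBernoulli(N_\ttime+1,T_\ttime)$ is then preserved inductively because each newly added vertex of the six vertex model translates via \eqref{main_capling_construction} into precisely the local rule updating the $m$-th $q$-TASEP particle, and the identification along the left boundary column is consistent with the step Bernoulli boundary values \eqref{Bernoulli_height_function_boundary_values}, since the probability $-a_1u_T/(1-a_1u_T)$ of an incoming arrow equals the jump probability of the leading particle under $\mathbf{B}_{-u_T}$.

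The main obstacle I anticipate is the bookkeeping required to ensure that chaining these per-vertex samplings along $\mathcal{P}$ produces the correct global joint law without spurious dependencies. \Cref{rmk:square_joint_distribution_of_TASEPs_not_indep} explicitly warns that attempting to identify the two ways of reaching a common lattice point (geometric-then-Bernoulli versus Bernoulli-then-geometric) can destroy the conditional independence of the intermediate states on which the local coupling relies. Because $\mathcal{P}$ is a single simply-connected time-like path, each vertex of the six vertex model in the region swept out by $\mathcal{P}$ is sampled exactly once using only data determined at strictly earlier steps along $\mathcal{P}$, so no such conflict arises; making this formal by writing the joint density along $\mathcal{P}$ as a telescoping product of one-step conditional densities that matches the product of vertex weights is expected to be the most delicate part of the argument.
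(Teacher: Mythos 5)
Your local observations are correct: under $\be=-u_T$ and $\al=\nua_m$ the vertex weight $\LLL_{-\be,a_m,\al a_m}$ used in \Cref{def:main_capling_construction} coincides with $\LLL_{u_T,a_m,\nu_m}$, and the jump probability $-a_1u_T/(1-a_1u_T)$ of the lead particle under $\mathbf{B}_{-u_T}$ matches the probability of an arrow entering from the left boundary. But the chaining argument has a genuine gap exactly where you flag it, and a direct induction on the path length $\mathsf{M}$ does not close it. The induction hypothesis --- that the joint law of $\{\x_{N_s}(N_s,T_s)+N_s\}_{s\le\ttime}$ agrees with that of $\{\HeightFunctionBernoulli(N_s+1,T_s)\}_{s\le\ttime}$ --- does not carry enough information to produce the next observable on either side. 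On the vertex-model side, a horizontal step from $(N,T)$ to $(N+1,T)$ needs the vertex at $(N+1,T)$, whose incoming arrows are determined by the configuration at $(N+1,T-1)$ and $(N,T)$, not by $\HeightFunctionBernoulli(N+1,T)$ alone. On the $q$-TASEP side, updating the $(N+1)$st particle requires the gap $\x_N(N,T)-\x_{N+1}(N,T)-1$, but only the single coordinate $\x_N(N,T)$ is part of the observed vector. In both cases the update depends on interior data that the path observables do not determine, and one must show that the conditional law of this hidden data, given what has been observed along $\mathcal{P}$, is the same in both models. Your statement that ``each vertex of the six vertex model in the region swept out by $\mathcal{P}$ is sampled exactly once using only data determined at strictly earlier steps along $\mathcal{P}$'' asserts this but does not establish it.

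The paper's proof is organized quite differently precisely to control this hidden information. It inducts on a Young-diagram-shaped set $Y\subset\Z_{\ge2}\times\Z_{\ge1}$ (adding one inner corner at a time) and works with \emph{forked} time-like paths $\bar{\mathcal{P}}$ having two endpoints inside $Y^{\sim}$. The fork is what allows \Cref{prop:main_capling_prop2} (resp.\ \Cref{prop:main_capling_prop}) to be applied as a local rotation around a newly added corner of $Y$, swapping between a Bernoulli-then-geometric route and a geometric-then-Bernoulli route to the new lattice site, while the $Y$-induction keeps track of exactly which interior data has been pinned down. Without the fork and the $Y$-induction, the one-step distributional equalities in \Cref{prop:main_capling_prop,prop:main_capling_prop2} do not telescope along $\mathcal{P}$. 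Note also that you aim for almost-sure equality on a common probability space, which is stronger than what the theorem asserts; the paper proves equality of joint laws (the coupling reading is drawn afterwards), and \Cref{rmk:square_joint_distribution_of_TASEPs_not_indep} explicitly leaves open the existence of a simultaneous pathwise construction on the full quadrant, which is essentially what your ``sample all vertices in the slice at once'' step would need.
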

By specializing $a_i\equiv 1$, $u_i\equiv -\be$, $i\ge1$, and $\nu_j\equiv \al$, $j\ge2$, \Cref{thm:capling} implies Theorem \ref{thm:intro_capling} from the Introduction.
\begin{proof}[Proof of \Cref{thm:capling}]
	Let $\bar{\mathcal{P}}:=\mathcal{P}\cup\{(N_{\mathsf{M}},T_{\mathsf{M}}+1),(N_{\mathsf{M}}+1,T_{\mathsf{M}})\}$, call it a \emph{forked time-like path} with endpoints $(N_{\mathsf{M}},T_{\mathsf{M}}+1)$ and $(N_{\mathsf{M}}+1,T_{\mathsf{M}})$. See \Cref{fig:2_way_splitting_path}, left. Define
	\begin{equation*}
	\begin{split}
		\HeightFunctionBernoulli\bigl({\bar{\mathcal{P}}}+\mathbf{e}_1\bigr)&:=\bigl\{\HeightFunctionBernoulli(N+1,T)\colon (N,T)\in\bar{\mathcal{P}}\bigr\};\\
		\X({\bar{\mathcal{P}}})&:=\X({\mathcal{P}})\cup\bigl\{\y_{N_{\mathsf{M}}}+N_{\mathsf{M}},
		\x_{N_{\mathsf{M}}+1}'+N_{\mathsf{M}}+1\bigr\},
	\end{split}
	\end{equation*}
	where the configuration $\vec\y$ is obtained from $\vec\x(N_{\mathsf{M}},T_{\mathsf{M}})$ via a Bernoulli move $\mathbf{B}_{-u_{T_{\mathsf{M}}+1}}$, the configuration $\vec\x'$ is obtained from $\vec\x(N_{\mathsf{M}},T_{\mathsf{M}})$ via a geometric move $\mathbf{G}_{\nua_{N_{\mathsf{M}}+1}}$, and the moves $\mathbf{B}_{-u_{T_{\mathsf{M}}+1}}$ and $\mathbf{G}_{\nua_{N_{\mathsf{M}}+1}}$ are independent given $\vec\x(N_{\mathsf{M}},T_{\mathsf{M}})$.

	\begin{figure}[htbp]
		\begin{tikzpicture}
			[scale=.7,very thick]
			\draw[->] (-.65,0)--++(7.5,0) node [right] {$N$};
			\draw[->] (0,-.65)--++(0,5.5) node [above left] {$T$};
			\foreach \nn in {1,...,7}
			{\node at (\nn-1,-1) {$\nn$};}
			\foreach \tt in {0,...,4}
			{\node at (-1,\tt) {$\tt$};}
			\foreach \nn in {0,...,6}
			{\foreach \tt in {0,...,4}
			{\draw[fill] (\nn,\tt) circle (4pt);}}
			\draw[line width=3.5pt,color=red,->] (0,0)
			--++(2,0)--++(0,1)--++(1,0)--++(0,1);
			\draw[line width=3.5pt,color=red,->] (3,2)--++(0,1);
			\draw[line width=3.5pt,color=red,->] (3,2)--++(1,0);
		\end{tikzpicture}\hspace{30pt}
		\begin{tikzpicture}
			[scale=.7,very thick]
			\draw[->] (-.65,0)--++(7.5,0) node [right] {$N$};
			\draw[->] (0,-.65)--++(0,5.5) node [above left] {$T$};
			\foreach \nn in {1,...,7}
			{\node at (\nn-1,-1) {$\nn$};}
			\foreach \tt in {0,...,4}
			{\node at (-1,\tt) {$\tt$};}
			\foreach \nn in {0,...,6}
			{\foreach \tt in {0,...,4}
			{\draw[fill] (\nn,\tt) circle (4pt);}}
			\draw[densely dashed] (.5,.5)--++(5,0)--++(0,1)--++(-1,0)--++(0,1)--++(-1,0)--++(0,1)--++(-3,0)--cycle;
			\node at (3.55,3.85) {$Y$};
			\draw[densely dashed] (-.5,4.5)--(-.5,-.5)--++(7,0);
			\draw[densely dashed] (.5,4.5)--(.5,.5)--++(6,0);
			\foreach \pt in {(1,4),(4,3),(5,2),(6,1)}
			{\draw \pt circle (8pt);}
		\end{tikzpicture}
		\caption{Left: Forked time-like path $\bar{\mathcal{P}}$ with $(N_{\mathsf{M}},T_{\mathsf{M}})=(4,2)$. Right: the sets $Y$ and $Y^{\sim}$ (inner corners are circled).}
		\label{fig:2_way_splitting_path}
	\end{figure}
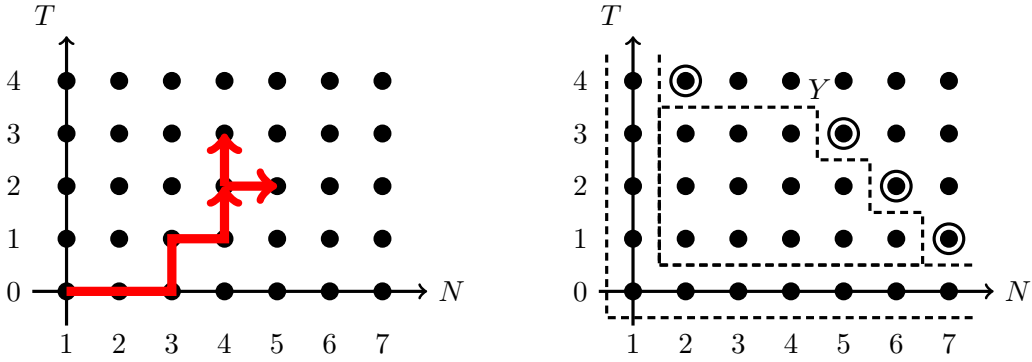
	
	Let $Y\subset\Z_{\ge2}\times\Z_{\ge1}$ be a finite subset such that with any $(i,j)\in Y$ it contains all $(i',j')$ with $2\le i'\le i$, $1\le j'\le j$. (In other words, $Y$ is a Young diagram.) Let also $Y^{\sim}:=Y\cup(\Z_{\ge1}\times\{0\})\cup(\{1\}\times\Z_{\ge0})$. See \Cref{fig:2_way_splitting_path}, right.

	By induction on adding points to $Y$, we aim to show that for any $Y$, the distributions of $\HeightFunctionBernoulli({\bar{\mathcal{P}}}+\mathbf{e}_1)$ and $\X({\bar{\mathcal{P}}})$ coincide for any forked time-like path $\bar{\mathcal{P}}$ contained inside $Y^{\sim}$. This statement is stronger than the theorem because one can restrict $\HeightFunctionBernoulli({\bar{\mathcal{P}}}+\mathbf{e}_1)$ and $\X({\bar{\mathcal{P}}})$ to $\HeightFunctionBernoulli({{\mathcal{P}}}+\mathbf{e}_1)$ and $\X({{\mathcal{P}}})$, respectively (where $\mathcal{P}$ is the original time-like path).

	The induction base is $Y=\varnothing$. Then in the set $Y^{\sim}$ there is only one forked time-like path $\{(1,0),(1,1),(2,0)\}$. The joint distribution of $\{\HeightFunctionBernoulli(2,0),\HeightFunctionBernoulli(2,1),\HeightFunctionBernoulli(3,0)\}$ is prescribed by the boundary condition \eqref{Bernoulli_height_function_boundary_values}, and is the same as for $\{\x_{1}(1,0)+1,\y_1+1,\x_{2}'+2\}$. Indeed, $\x_{1}(1,0)+1=0$ because $\vec\x(1,0)$ is the step initial configuration. Next, $\y_1+1=\mathsf{b}_1$ because of how the evolution of the first particle under the Bernoulli $q$-TASEP looks like. Finally, $\x_{2}'+2=0$ because under a single move of the geometric $q$-TASEP from the step initial configuration only the first particle can change its location.

	For the induction step, consider adding a new point, say, $A:=(N+1,T+1)$, to $Y$. We need to show that the distributions of $\HeightFunctionBernoulli({\bar{\mathcal{P}}}+\mathbf{e}_1)$ and $\X({\bar{\mathcal{P}}})$ coincide for any forked time-like path $\bar{\mathcal{P}}$ inside $(Y\cup \{A\})^{\sim}$. It suffices to consider only $\bar{\mathcal{P}}$ for which one of the endpoints is $A$. Fix such $\bar{\mathcal{P}}$. Let $B:=(N,T+1)$, $C:=(N,T)$, and $D:=(N+1,T)$. Since a point to $Y$ can be added only at an inner corner (see \Cref{fig:2_way_splitting_path}, left), we have $B,C,D\in Y^{\sim}$. Assume that $E:=(N+2,T)\in Y^{\sim}$ is the other endpoint of $\bar{\mathcal{P}}$ (if $E\notin Y^{\sim}$, there is nothing to prove). The case when the other endpoint is $(N,T+2)$ is analogous.

	\begin{figure}[htbp]
		\begin{tikzpicture}
			[scale=1.5,very thick]
			\draw[densely dashed, ultra thick] (-.25,2)--(0,2)--++(1,0)--++(0,-1)--++(2.25,0);
			\node[draw,circle] (A) at (1.5,1.5) {$A$};
			\node[draw,circle] (B) at (.5,1.5) {$B$};
			\node[draw,circle] (C) at (.5,.5) {$C$};
			\node[draw,circle] (D) at (1.5,.5) {$D$};
			\node[draw,circle] (E) at (2.5,.5) {$E$};
			\node at (.5,-1.25) {\phantom{$N$}};
			\node at (.5,-.25) {$N$};
			\node at (1.5,-.25) {$N+1$};
			\node at (2.5,-.25) {$N+2$};
			\node at (-.45,.5) {$T$};
			\node at (-.45,1.5) {$T+1$};
			\draw[red,line width=3pt,->] (C)--(D);
			\draw[red,line width=3pt,->] (D)--(A);
			\draw[red,line width=3pt,->] (D)--(E);
		\end{tikzpicture}\hspace{60pt}
		\begin{tikzpicture}
			[scale=1.5,very thick]
			\draw[densely dashed, ultra thick] (-.25,2)--(0,2)--++(1,0)--++(0,-1)--++(2.25,0);
			\node[draw,circle] (A) at (1.5,1.5) {$A$};
			\node[draw,circle] (B) at (.5,1.5) {$B$};
			\node[draw,circle] (C) at (.5,.5) {$C$};
			\node[draw,circle] (D) at (1.5,.5) {$D$};
			\node[draw,circle] (F) at (1.5,-.5) {$F$};
			\node[draw,circle] (G) at (2.5,-.5) {$G$};
			\node[draw,circle] (E) at (2.5,.5) {$E$};
			\node at (.5,-1.25) {$N$};
			\node at (1.5,-1.25) {$N+1$};
			\node at (2.5,-1.25) {$N+2$};
			\node at (-.45,.5) {$T$};
			\node at (-.45,-.5) {$T-1$};
			\node at (-.45,1.5) {$T+1$};
			\draw[red,line width=3pt,->] (F)--(D);
			\draw[red,line width=3pt,->] (D)--(A);
			\draw[red,line width=3pt,->] (D)--(E);
		\end{tikzpicture}
		\caption{Two possibilities for the forked time-like path $\bar{\mathcal{P}}$ with endpoints $A$ and $E$. The dashed line is the boundary of $Y$.}
		\label{fig:YD_proof}
	\end{figure}
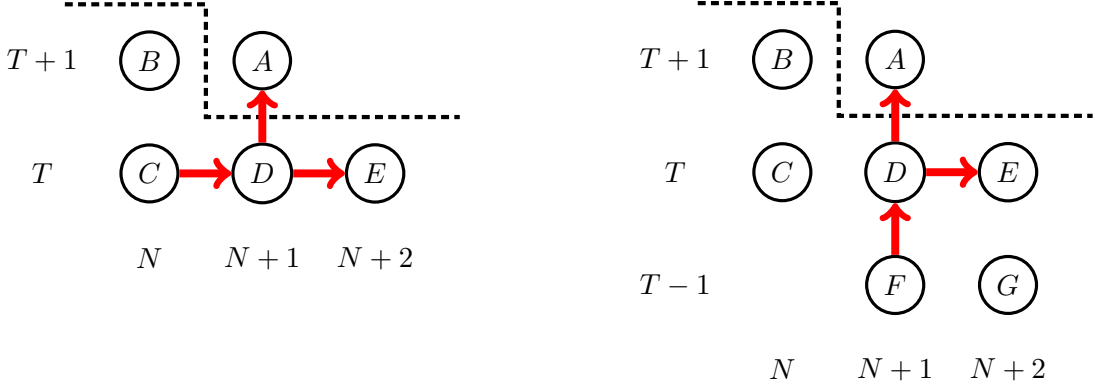

	There are two possibilities of how $\bar{\mathcal{P}}$ can approach $D$, see \Cref{fig:YD_proof}. Let $\bar{\mathcal{P}}$ pass through $C$ (the case when it passes through $F:=(N+1,T-1)$ is analogous). Because $\bar{\mathcal{P}}':=(\bar{\mathcal{P}}\cup\{B\})\setminus\{A,E\}$ is a forked time-like path inside $Y^{\sim}$, the induction hypothesis implies that the joint distribution of the random variables $\HeightFunctionBernoulli(\bar{\mathcal{P}}'+\mathbf{e}_1)$ is the same as for $\X(\bar{\mathcal{P}}')$. In other words, the values of the height function along $\bar{\mathcal{P}}'+\mathbf{e}_1$ can be sampled using $q$-TASEP moves. Let $\bar{\mathcal{P}}'':=(\bar{\mathcal{P}}'\cup\{A\})\setminus\{B\}=\bar{\mathcal{P}}\setminus\{E\}$. By \Cref{prop:main_capling_prop2}, the values of the height function along $\bar{\mathcal{P}}''+\mathbf{e}_1$ also can be sampled using the corresponding $q$-TASEP moves. Now, given $\vec\x(D)=\vec\x(N+1,T)$, sample $\vec\x(E)=\vec\x(N+2,T)$ independently of $\vec\x(A)$ using the geometric $q$-TASEP move $\mathbf{G}_{\nua_{N+2}}$, and put $\widehatHeightFunctionBernoulli(E+\mathbf{e}_1)=\widehatHeightFunctionBernoulli(N+3,T):=\x_{N+2}(E)+N+2$. By the Markov property, $\widehatHeightFunctionBernoulli(E+\mathbf{e}_1)$ is independent (given $\vec\x(D)$) of the $q$-TASEP configurations $\vec \x$ corresponding to all points of $\bar{\mathcal{P}}''\setminus\{A,D\}=\bar{\mathcal{P}}\setminus\{A,D,E\}$.

	It remains to show that the joint distribution of $\X(\bar{\mathcal{P}}'')\cup\{\widehatHeightFunctionBernoulli(E+\mathbf{e}_1)\}$ is the same as for $\HeightFunctionBernoulli(\bar{\mathcal{P}}''+\mathbf{e}_1)\cup\{\HeightFunctionBernoulli(E+\mathbf{e}_1)\}$, where $\HeightFunctionBernoulli(E+\mathbf{e}_1)$ comes from the higher spin six vertex model. If $T=0$, this follows similarly to the induction base. For $T\ge1$, $\HeightFunctionBernoulli(E+\mathbf{e}_1)$ is sampled knowing $\HeightFunctionBernoulli(D+\mathbf{e}_1)$, $\HeightFunctionBernoulli(F+\mathbf{e}_1)$, $\HeightFunctionBernoulli(G+\mathbf{e}_1)$ using the vertex weights. Consider any forked time-like path $\bar{\mathcal{P}}^{\vee}$ with endpoints $D$ and $G:=(N+2,T-1)$. It lies inside $Y^{\sim}$. By the induction hypothesis and \Cref{prop:main_capling_prop}, the joint distribution of the values of the height function along $\bigl((\bar{\mathcal{P}}^{\vee}\cup\{E\})\setminus\{G\}\bigr)+\mathbf{e}_1$ can be sampled using the corresponding $q$-TASEP moves. Restricting to $D$ and $E$, we see that the pair $\{\HeightFunctionBernoulli(D+\mathbf{e}_1),\HeightFunctionBernoulli(E+\mathbf{e}_1)\}$ can be sampled as follows. Take the configuration $\vec\x(D)$ having the marginal $q$-TASEP distribution (obtained by making $T$ Bernoulli and $N$ geometric moves in any order), make one more geometric move $\mathbf{G}_{\nua_{N+2}}$ to get $\vec\x(E)$, and set
	\begin{equation*}
		\HeightFunctionBernoulli(D+\mathbf{e}_1)=\x_{N+1}(D)+N+1
		,\qquad
		\HeightFunctionBernoulli(E+\mathbf{e}_1)=\x_{N+2}(E)+N+2
		.
	\end{equation*}
	But this is exactly how we defined $\widehatHeightFunctionBernoulli(E+\mathbf{e}_1)$ earlier. This completes the proof.
\end{proof}

\Cref{thm:capling} extends the matching between the single-point distributions of the inhomogeneous stochastic higher spin six vertex model the step Bernoulli boundary condition and a mixed geometric/Bernoulli $q$-TASEP (known from \Cref{cor:matching_Bernoulli} and \Cref{prop:qTASEP_moments}) to multi-point joint distributions along time-like paths.\footnote{Note also that the proof of \Cref{thm:capling} is independent from the contour integral, difference operator, or $q$-deformed Robinson--Schensted--Knuth considerations leading to \Cref{cor:matching_Bernoulli} and \Cref{prop:qTASEP_moments}. Thus, \Cref{thm:capling} presents yet another way of proving single-point $q$-moment formulas.} Moreover, this theorem provides a (partial) coupling of the two models in the sense that the random variables $\HeightFunctionBernoulli\bigl({\mathcal{P}}+\mathbf{e}_1\bigr)$ \eqref{higher_spin_up_right} corresponding to any fixed time-like path $\mathcal{P}$ can be regarded as functions on the space of sample paths of a mixed $q$-TASEP as in \eqref{TASEP_up_right} (for which the order of the geometric and Bernoulli moves depends on $\mathcal{P}$).

\begin{remark}
	It would be very interesting to see if the partial coupling of \Cref{thm:capling} can be extended to a full coupling between the random variables $\{\HeightFunctionBernoulli(N+1,T)\}_{N\ge1,\; T\ge0}$ and a $q$-TASEP-like particle system. Such an extension would probably require substantial new ideas (cf.~\Cref{rmk:square_joint_distribution_of_TASEPs_not_indep} presenting a clear obstacle to a straightforward approach), and we do not pursue it here.
\end{remark}

% subsection coupling_relating_joint_distributions (end)

\subsection{Extensions and remarks} % (fold)
\label{sub:more_general_boundary_conditions}

Let us conclude this section by making a number of remarks in connection with \Cref{thm:capling}. 

\subsubsection{Generalized step Bernoulli boundary condition}

First, consider the generalized step Bernoulli boundary condition for the stochastic higher spin six vertex model introduced in \cite{AmolBorodin2016Phase}. To obtain this boundary condition (of order $r\ge1$), specialize $\nu_2=\ldots=\nu_r=0$ in the model with the step Bernoulli boundary condition, and look at the arrow configuration in the shifted quadrant $\Z_{\ge r+1}\times\Z_{\ge1}$. The case $r=1$ (corresponding to no specialization of $\nu_2,\nu_3,\ldots$) is simply the step Bernoulli boundary condition.

Fix $r\ge1$, and denote the height function of the model with the generalized step Bernoulli boundary condition (of order $r$) by $\HeightFunctionGenBernoulli_r(N+r,T)$, $N\in\Z_{\ge1}$, $T\in\Z_{\ge0}$. Note that the distribution of the values of $\HeightFunctionGenBernoulli_r$ at the left boundary of $\Z_{\ge r+1}\times\Z_{\ge1}$ depends on $r$ free parameters $a_1,\ldots,a_r$ (these parameters correspond to the first $r$ columns).

For any time-like path $\mathcal{P}=\{(N_{\ttime},T_{\ttime})\}_{\ttime=0}^{\mathsf{M}}$ in the quadrant $\Z_{\ge1}\times\Z_{\ge0}$ (as in \Cref{def:up_right_mixed_TASEP_joint}), define the collection of random variables $\HeightFunctionGenBernoulli_r(\mathcal{P}+r\mathbf{e}_1):=\big\{\HeightFunctionGenBernoulli_r(N_\ttime+r,T_{\ttime})\big\}_{\ttime=0}^{\mathsf{M}}$, where $r\mathbf{e}_1=(r,0)$. Also let the $\mathcal{P}$-dependent mixed geometric/Bernoulli $q$-TASEP $\vec\x^{(r)}(N_\ttime{},T_\ttime{})$, $\ttime{}=0,1,\ldots,\mathsf{M}$, be as in \Cref{def:up_right_mixed_TASEP_joint}, but with geometric moves having shifted parameters $\nua_{r+1},\nua_{r+2},\ldots$. That is, the marginal distribution of $\vec\x^{(r)}(N,T)$ corresponds to taking $N+T-1$ moves of the $q$-TASEP (with particle rates $a_1,a_2,\ldots$), of which there are $T$ Bernoulli moves with parameters $-u_1,\ldots,-u_T$, and $N-1$ geometric moves with parameters $\nua_{r+1},\ldots,\nua_{r+N-1}$.

\begin{proposition}\label{prop:generalized_step_Bernoulli}
	With the above notation, for any time-like path $\mathcal{P}$ the joint distribution of $\HeightFunctionGenBernoulli_r(\mathcal{P}+r\mathbf{e}_1)$ is the same as the joint distribution of the shifted particle locations
	\begin{equation*}
		\X^{(r)}(\mathcal{P}):=
		\big\{
		\x^{(r)}_{N_\ttime+r-1}(N_\ttime,T_\ttime)+N_{\ttime}+r-1
		\big\}_{\ttime=0}^{\mathsf{M}}
	\end{equation*}
	in the mixed geometric/Bernoulli $q$-TASEP $\vec\x^{(r)}$ described above.
\end{proposition}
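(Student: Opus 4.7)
The plan is to reduce \Cref{prop:generalized_step_Bernoulli} to \Cref{thm:capling} by observing that the generalized step Bernoulli boundary condition of order $r$ is, by definition, the step Bernoulli condition with the additional specialization $\nu_2=\ldots=\nu_r=0$ (and hence $\nua_2=\ldots=\nua_r=0$), restricted to the shifted quadrant $\Z_{\ge r+1}\times\Z_{\ge1}$. In particular, for any $N\ge1$ and $T\ge0$ one has the identification $\HeightFunctionGenBernoulli_r(N+r,T)=\HeightFunctionBernoulli(N+r,T)$ for this specialized choice of parameters, and this identification is compatible with joint distributions along any path.

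The crucial algebraic input is that the geometric $q$-TASEP move $\mathbf{G}_{\nua}$ with parameter $\nua=0$ is the identity on any configuration. Indeed, from \eqref{q_geom_distr} and \eqref{q_geom_distr_infinity} one sees immediately that $\pgeom_{m,0}(j)=\delta_{j,0}$ for every $m\in\Z_{\ge0}\cup\{+\infty\}$ (using the convention $0^0=1$ and $(0;q)_m=1$), so no particle moves under $\mathbf{G}_{0}$. Consequently, the first $r-1$ moves in the mixed $q$-TASEP of \Cref{def:up_right_mixed_TASEP_joint} corresponding to the specialized parameters $\nua_2=\ldots=\nua_r=0$ leave the step initial configuration unchanged.

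Given the time-like path $\mathcal{P}=\{(N_{\ttime},T_{\ttime})\}_{\ttime=0}^{\mathsf{M}}$ from the statement, I would form the extended time-like path
\begin{equation*}
  \widetilde{\mathcal{P}}:=\bigl\{(1,0),(2,0),\ldots,(r-1,0)\bigr\}\cup\bigl\{(N_{\ttime}+r-1,T_{\ttime})\bigr\}_{\ttime=0}^{\mathsf{M}},
\end{equation*}
which starts at $(1,0)$, first takes $r-1$ purely horizontal steps to reach $(r,0)=(N_0+r-1,T_0)$, and then follows the translate of $\mathcal{P}$ by $(r-1,0)$. Applying \Cref{thm:capling} to $\widetilde{\mathcal{P}}$ in the model with $\nu_2=\ldots=\nu_r=0$ yields an equality in joint distribution between $\bigl\{\HeightFunctionBernoulli(\widetilde{N}_s+1,\widetilde{T}_s)\bigr\}$ and $\bigl\{\x_{\widetilde{N}_s}(\widetilde{N}_s,\widetilde{T}_s)+\widetilde{N}_s\bigr\}$ along $\widetilde{\mathcal{P}}$, where the $q$-TASEP uses the (specialized) geometric parameters $\nua_2,\ldots,\nua_{N_{\mathsf{M}}+r-1}$.

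To finish, I would restrict attention to the part of $\widetilde{\mathcal{P}}$ indexed by $s\ge r-1$, i.e.{} to $\mathcal{P}$ itself. On the height-function side this gives exactly $\HeightFunctionGenBernoulli_r(\mathcal{P}+r\mathbf{e}_1)$ by the identification above. On the $q$-TASEP side, the first $r-1$ trivial moves $\mathbf{G}_{\nua_2},\ldots,\mathbf{G}_{\nua_r}$ (all with parameter $0$) can be deleted without altering the joint distribution of the remaining configurations; the surviving moves are precisely the moves $\mathbf{B}_{-u_{T_{\ttime+1}}}$ and $\mathbf{G}_{\nua_{N_{\ttime+1}+r-1}}$ prescribed by $\mathcal{P}$ in the system $\vec\x^{(r)}$ (with geometric parameters shifted to $\nua_{r+1},\nua_{r+2},\ldots$), and $\widetilde{N}_{r-1+\ttime}=N_{\ttime}+r-1$ is exactly the index of the tagged particle that appears in $\X^{(r)}(\mathcal{P})$. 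This matches the two sides of the claim and completes the reduction. There is no real obstacle beyond bookkeeping; the only point to be careful about is the reindexing of the path and the verification that $\mathbf{G}_0$ acts trivially on any configuration, both of which are immediate.
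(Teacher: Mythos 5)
Your argument is correct and follows exactly the route the paper's one-line proof leaves implicit (``Follows from \Cref{thm:capling} after setting $\nu_2=\ldots=\nu_r=0$ and shifting indices by $r-1$''). Spelling out that $\mathbf{G}_0$ is the identity move (since $\pgeom_{m,0}(j)=\delta_{j,0}$) and prepending $r-1$ horizontal steps to $\mathcal{P}$ is precisely the right way to make that index shift rigorous, and your re-indexing along $\widetilde{\mathcal{P}}$ lands on $\X^{(r)}(\mathcal{P})$ as required.
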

\begin{proof}
	Follows from \Cref{thm:capling} after setting $\nu_2=\ldots=\nu_r=0$ and shifting indices by $r-1$.
\end{proof}

\begin{remark}\label{rmk:BBP_type_limit_transition_to_reference}
	\Cref{prop:generalized_step_Bernoulli} can be reformulated to complement \Cref{thm:capling} as follows. Take a time-like path $\mathcal{P}$. By \Cref{thm:capling}, the values along $\mathcal{P}+\mathbf{e}_1$ of the height function $\HeightFunctionBernoulli$ with the step Bernoulli boundary condition can be identified with shifted particle locations in a certain mixed geometric/Bernoulli $q$-TASEP (depending on $\mathcal{P}$). Fix $r\ge2$. Looking at \emph{different particles in the same $q$-TASEP}, more precisely, using the $(k+r-1)$th particle whenever \Cref{thm:capling} used the $k$th particle gives the values (along $\mathcal{P}+r\mathbf{e}_1$) of the height function $\HeightFunctionGenBernoulli_r$ with the generalized step Bernoulli boundary condition of order $r$.
\end{remark}

\subsubsection{Other boundary conditions}
 
It would be very interesting to obtain analogues of \Cref{thm:capling} and \Cref{prop:generalized_step_Bernoulli} for the case of the step boundary condition (i.e., when a new arrow enters at each horizontal on the left boundary). Note that the step Bernoulli boundary condition gives rise to the step boundary condition in the limit as $a_1\to+\infty$. However, the statement of \Cref{thm:capling} does not survive such a limit transition because for sufficiently large $a_1$ the jumping distribution \eqref{q_geom_distr_infinity} of the first particle under the geometric $q$-TASEP does not make sense. It seems likely that to construct a coupling in the case of the step boundary condition in the stochastic higher spin six vertex model one must invent suitable modifications of the Bernoulli and geometric $q$-TASEPs. This will be addressed in a future work.

\subsubsection{Stochastic six vertex model}

Taking $\nu_i\equiv 1/q$ turns the stochastic higher spin six vertex model into the stochastic six vertex model, in which the number of arrows per vertical edge is bounded by one. The stochastic six vertex model was introduced in \cite{GwaSpohn1992} and studied recently in \cite{BCG6V}, \cite{AmolBorodin2016Phase}, \cite{Amol2016Stationary}, \cite{borodin2016stochastic_MM}. It would be interesting to see if either of the approaches of the present paper (via $q$-difference operators or via coupling with $q$-TASEP-like particle systems) is applicable to the stochastic six vertex model. The moment formulas for the stochastic six vertex model (see \cite[Corollary 10.1]{BorodinPetrov2016inhom} for the most general ones) do not seem to immediately match with moment formulas of a $q$-TASEP-like particle system similarly to \Cref{cor:matching_Bernoulli}.

On the other hand, a different identification of averages of observables between the stochastic higher spin six vertex model (with the step boundary condition) and Macdonald measures was recently observed in \cite{borodin2016stochastic_MM}. For the stochastic six vertex model it provides a matching of averages of observables with a Schur measure, and leads to another way of obtaining GUE Tracy--Widom asymptotics of the stochastic six vertex model (in addition to an earlier work \cite{BCG6V}). The results of \cite{borodin2016stochastic_MM} deal with single-point distributions, and the upcoming work \cite{BorodinBufetovPrep} will provide an identification of multi-point joint distributions between the stochastic six vertex model and Hall--Littlewood processes (which are $q=0$ degenerations of the Macdonald processes).

% subsection more_general_boundary_conditions (end)

% section capling_with_q_tasep (end)
 
\section{Asymptotics of $q$-TASEP via Schur measures} % (fold)
\label{sec:asymptotics_of_q_tasep_via_schur_measures}

Here we employ results of \cite{borodin2016stochastic_MM} together with \Cref{prop:qTASEP_moments} (which also follows from \Cref{thm:capling} proven independently of \Cref{prop:qTASEP_moments}) to derive asymptotics of a mixed geometric/Bernoulli $q$-TASEP with certain special parameters using Schur measures.

\subsection{Matching to Schur measures} % (fold)
\label{sub:matching_to_schur_measures}

Setting $q=0$ in the $q$-Whittaker measures (described in \Cref{sub:_q_whittaker_measures}) turns them into \emph{Schur measures}. Let us denote the Schur measure on $\Part_n$ with one specialization $(x_1,\ldots,x_n)$ (where $x_i>0$) and another specialization $\rho$ as in \eqref{Pi_definition} with $q=0$ by $\SchurMeas_{(x_1,\ldots,x_n);\rho}$. The advantage of Schur measures is that they have a determinantal structure allowing to express arbitrary multi-point correlations of the corresponding random partition as determinants of a certain kernel. The kernel itself has a double contour integral form first written down in \cite{okounkov2001infinite}. See also the surveys \cite{Soshnikov2000}, \cite{peres2006determinantal}, \cite{Borodin2009} for a general discussion of determinantal point processes.

Consider the stochastic higher spin six vertex model with the step Bernoulli boundary condition (depending on a parameter $a_1>0$), other parameters $u_1,u_2,\ldots<0$, $a_2,a_3,\ldots>0$, and $\nu_i\equiv q\in(0,1)$, $i\ge2$. That is, we are making a special choice of all the $\nu$ parameters in the model.

\begin{proposition}\label{prop:matching_to_schur_measures}
	For any $T\ge0$, $N\ge1$, and $\zeta\in\C\setminus\R_{<0}$ we have\footnote{In the right-hand side, by agreement, $\la_m=0$ for $m\le0$.}
	\begin{equation}\label{matching_to_schur_measures}
		\EE_{\mathbf{HS},\,\nu_i\equiv q}^{\textnormal{step Bernoulli}}
		\bigg[\prod_{i\ge0}
		\frac{1}{1+\zeta q^{\HeightFunction(N+1,T)+i}}
		\bigg]
		=
		\EE_{\SchurMeas_{(-u_1^{-1},\ldots,-u_T^{-1});\rho_N}}
		\bigg[
		\prod_{j\ge0}
		\frac{1+\zeta q^{\la_{T-j}+j}}{1+\zeta q^j}
		\bigg],
	\end{equation}
	where in the Schur measure on the right the specialization $\rho_N$ has no nonzero alpha parameters, and its beta parameters have the form $(a_1^{-1},a_1^{-1}q,a_1^{-1}q^2,\ldots; {a_2^{-1},\ldots,a_N^{-1}})$ (see also \eqref{Schur_specialization_matching} below).
\end{proposition}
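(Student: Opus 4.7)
The plan is to derive \eqref{matching_to_schur_measures} as a generating-function identity in $\zeta$, obtained by summing a family of $e_\ell$-type expectation identities against $\zeta^\ell$. The main input is the single-point matching between observables of the inhomogeneous stochastic higher spin six vertex model with the step boundary condition and Macdonald measures established in \cite{borodin2016stochastic_MM} (Theorem 4.2 there, recalled in \Cref{rmk:clash_for_multipoint} above via \Cref{rmk:Macdonald_connection}): for each $\ell\ge 1$, a product-type observable of $\HeightFunction(N+1,T)$ has the same expectation as $e_\ell(q^{\la_1}t^{N-1},\ldots,q^{\la_N})$ under a Macdonald measure with matching parameters. The specialization $\nu_i\equiv q$ for $i\ge2$ forces the Macdonald parameter to degenerate as $t=q$, collapsing the Macdonald measure into a Schur measure. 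This is the only special value of the $\nu_i$'s (apart from $\nu_i\equiv 1/q$, which corresponds to the stochastic six vertex model) for which the matching of \cite{borodin2016stochastic_MM} lands inside the Schur-measure locus, as noted in \Cref{sub:matching_with_schur_measures_and_asymptotics_of_}.

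Next, I would carefully identify the Schur specializations. Under the matching of \cite{borodin2016stochastic_MM}, the $T$ spectral parameters $u_1,\ldots,u_T$ of the vertex model become the $T$ Schur variables $(-u_1^{-1},\ldots,-u_T^{-1})$, while each column inhomogeneity with $\nua_i = \nu_i/a_i = q/a_i$ (for $i=2,\ldots,N$) contributes a single beta parameter $a_i^{-1}$ to the second specialization. To pass from the step boundary condition (the setting of \cite{borodin2016stochastic_MM}) to the step Bernoulli boundary condition required in the proposition, I would invoke the realization described at the end of \Cref{sub:step_and_bernoulli_boundary_conditions}: the step Bernoulli model is the limit $K\to\infty$ of the step model with $a_1=-\nu_1$ and the first $K$ spectral parameters $u_i$ specialized to a geometric progression with ratio $q$ starting at $-a_1^{-1}$. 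Feeding this limit through the Schur-measure matching, those $K$ geometric-progression variables turn into a block of beta parameters $(a_1^{-1},a_1^{-1}q,\ldots,a_1^{-1}q^{K-1})$, and the limit $K\to\infty$ produces exactly the infinite geometric progression $(a_1^{-1},a_1^{-1}q,a_1^{-1}q^2,\ldots)$ described in $\rho_N$, appended to the block $(a_2^{-1},\ldots,a_N^{-1})$.

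With the specializations pinned down, I would assemble the generating series. Using $\sum_{\ell\ge 0}\zeta^\ell e_\ell(x_1,x_2,\ldots)=\prod_i(1+\zeta x_i)$ with $x_i=q^{\la_i+T-i}$ (where $\la\in\Part_T$), the Schur side yields $\prod_{i=1}^{T}(1+\zeta q^{\la_i+T-i})$; rewriting with $j=T-i$ and dividing by the reference $\la=\varnothing$ product $\prod_{i=1}^T(1+\zeta q^{T-i})$ (which can be absorbed as a normalizing constant coming from $\Pi_{\SchurMeas}$ under the Cauchy identity) produces exactly the product over $j\ge0$ on the right-hand side of \eqref{matching_to_schur_measures}. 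On the vertex-model side, the product-form observables of \cite{borodin2016stochastic_MM} sum into the $q$-Pochhammer product $\prod_{i\ge 0}(1+\zeta q^{\HeightFunction(N+1,T)+i})^{-1}$ on the left. Convergence of both series for sufficiently small $|\zeta|$ is immediate from the boundedness of $\HeightFunction$ and $e_\ell$; the extension to $\zeta\in\C\setminus\R_{<0}$ then follows by analytic continuation, just as in the proof of \Cref{cor:matching_gen_functions}.

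The hard part will be the bookkeeping in the limit transition $K\to\infty$ that produces the step Bernoulli boundary condition on the vertex side and the infinite geometric progression on the Schur side simultaneously: one must verify that this limit commutes with the application of the matching theorem of \cite{borodin2016stochastic_MM} (i.e., that the matching holds in a strong enough sense to survive the infinite-variable limit in the Schur specialization), and that the generating-function identity for $e_\ell$ still converges and can be analytically continued to the stated range of $\zeta$ after the limit has been taken. Everything else is a routine reorganization of identities already available in the cited literature.
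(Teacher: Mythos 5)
The paper's proof is a two-line citation: take the step-boundary-condition matching of \cite[Section 4]{borodin2016stochastic_MM}, let $\nu_1\to 0$ to pass to the step Bernoulli boundary condition, and translate parameters via \Cref{rmk:matching_with_S_XI_parameters}. You chose the other route listed at the end of \Cref{sub:step_and_bernoulli_boundary_conditions} — realizing step Bernoulli as a $K\to\infty$ limit of step with a geometric progression of $u_i$'s — and this is where your argument breaks.

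When you append $K$ rows with geometrically specialized spectral parameters $u_i$, the matching of \cite{borodin2016stochastic_MM} turns each of those $K$ rows into a \emph{Schur variable} $-u_i^{-1}$, i.e., an alpha-type parameter of the first specialization $\SchurMeas_{(\cdots);\rho_N}$. These do not "turn into a block of beta parameters" of $\rho_N$, as you assert; alpha and beta parameters enter the Cauchy kernel in structurally different ways. Concretely, $K$ Schur variables in geometric progression $c,cq,\ldots,cq^{K-1}$ contribute $\prod_{m=0}^{K-1}(1-cq^m z)^{-1}\to 1/(cz;q)_\infty$ for each parameter $z$ of the other specialization, which is a \emph{denominator}. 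What the proposition actually requires, via \eqref{Schur_specialization_matching}, is the \emph{numerator} factor $(a_1^{-1}u_k^{-1};q)_\infty=\prod_{m\ge0}(1+a_1^{-1}q^m\cdot(-u_k^{-1}))$, i.e., an infinite family of \emph{beta} parameters $a_1^{-1}q^m$. These are not the same object, and you cannot trade alphas for betas without introducing a duality (conjugation of partitions, swap of specializations) — which would also change the observable, since the right-hand side of \eqref{matching_to_schur_measures} is phrased in terms of $\la_{T-j}$ with $\la\in\Part_T$, whereas your $K\to\infty$ limit would produce a Schur measure whose support has unbounded row length. None of this extra work is supplied, and it is not just "bookkeeping." The paper's choice of the $\nu_1\to 0$ degeneration avoids the problem entirely: it keeps exactly $T$ Schur variables $(-u_1^{-1},\ldots,-u_T^{-1})$ throughout and only deforms the second specialization $\rho_N$, which is what produces the infinite geometric progression of beta parameters in $\rho_N$ in the first place. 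The rest of your proposal (summing $e_\ell$-observables against $\zeta^\ell$ to form the $q$-Pochhammer generating series, then analytically continuing in $\zeta$) is a reasonable sketch, but it is subordinate to the incorrect identification above.
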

\begin{proof}
	This is obtained from the results of \cite[Section 4]{borodin2016stochastic_MM} (which correspond to the step boundary condition in the vertex model) by letting $\nu_1\to 0$ and taking into account the matching of the parameters as in \Cref{rmk:matching_with_S_XI_parameters}. Note also that in the case of the stochastic six vertex model with the (generalized) step Bernoulli boundary condition a similar identification with Schur measures was written down in \cite[Appendix B]{AmolBorodin2016Phase}.
\end{proof}
\begin{remark}
	When $\nu_i\equiv \nu\in(0,1)$ but $\nu\ne q$, an identification similar to \eqref{matching_to_schur_measures} holds, but with a certain Macdonald measure in the right-hand side, cf. \cite[Section 4]{borodin2016stochastic_MM}. For the purpose of asymptotic analysis we focus only on the Schur case.
\end{remark}

% subsection matching_to_schur_measures (end)

\subsection{$q$-TASEP with special parameters} % (fold)
\label{sub:_q_tasep_with_special_parameters}

As follows from \Cref{prop:qTASEP_moments} or \Cref{thm:capling}, under the stochastic higher spin six vertex model described above the random variable $\HeightFunction(N+1,T)$ has the same distribution as $\x_N(N,T)+N$. Here $\vec\x(N,T)$ is the configuration of the mixed geometric/Bernoulli $q$-TASEP (started from the step initial configuration) after $T$ Bernoulli moves with \emph{arbitrary parameters} $\mathbf{B}_{-u_1},\ldots,\mathbf{B}_{-u_T}$ and $N-1$ geometric moves with \emph{special parameters} $\mathbf{G}_{q/a_2},\ldots,\mathbf{G}_{q/a_N}$ (by \Cref{prop:commute}, the order of these moves does not matter).

Thus, \Cref{prop:qTASEP_moments,prop:matching_to_schur_measures} together imply a matching of averages of observables between the $q$-Whittaker measure 
\begin{equation}\label{Whittaker_specialization_matching}
	\WhittakerMeas_{(a_1,\ldots,a_N);\bar\rho(N,T)},
	\qquad
	\Pi_{\WhittakerMeas}(a_1,\ldots,a_N;\bar\rho(N,T))=
	\prod_{k=1}^N\biggl(
	\prod_{i=1}^{T}(1-a_ku_i)
	\prod_{j=2}^{N}\frac{1}{(qa_ka_j^{-1};q)_{\infty}}
	\biggr)
\end{equation}
and the Schur measure
\begin{equation}\label{Schur_specialization_matching}
	\SchurMeas_{(-u_1^{-1},\ldots,-u_T^{-1});\rho_N},
	\qquad
	\Pi_{\SchurMeas}(-u_1^{-1},\ldots,-u_T^{-1};\rho_N)=
	\prod_{k=1}^T\biggl(
	(a_1^{-1}u_k^{-1};q)_{\infty}\prod_{j=2}^N(1-a_j^{-1}u_k^{-1})
	\biggr).
\end{equation}
Here we gave normalization constants for both measures (see \eqref{qWhittaker_measure} for the $q$-Whittaker case, and the Schur measure is given by a similar product of two Schur symmetric functions). The matching of averages of observables between \eqref{Whittaker_specialization_matching} and \eqref{Schur_specialization_matching} is based on a comparison of contour integral formulas, and it would be very interesting to find an independent argument implying this matching.

Put $u_j\equiv u<0$, $j\ge1$, and $a_i\equiv 1$, $i\ge2$ (setting $a_i\equiv a$ for some $a>0$ is essentially equivalent to having $a=1$). Recall that the rate of the first particle is still $a_1>1$ which can differ from $1$. Then the jumping distribution \eqref{q_geom_distr} of the second, third, etc.{} particle in the geometric $q$-TASEP simplifies to
\begin{equation}\label{q_geom_distr_special}
	\pgeom_{m,q}(j)=\frac{q^{j}(q;q)_m}{(q;q)_j},\qquad j=0,1,\ldots,m.
\end{equation}

% subsection _q_tasep_with_special_parameters (end)

\subsection{Asymptotic behavior} % (fold)
\label{sub:asymptotics}

Let the parameters of the mixed geometric/Bernoulli $q$-TASEP $\vec\x(N,T)$ be as above. Set $T=\lfloor \tau M \rfloor$, $N=\lfloor \eta M \rfloor$ for some $\tau,\eta>0$, where $M\to+\infty$. Let $\la^{(M)}$ be a random partition distributed according to the Schur measure as in the right-hand side of \eqref{matching_to_schur_measures}, with $(-u_1^{-1},\ldots,-u_T^{-1})=(-u^{-1},\ldots,-u^{-1})$ ($\lfloor \tau M \rfloor$ times), and such that the specialization $\rho_N=\rho_{\lfloor\eta M\rfloor}$ has beta parameters $(a_1^{-1},a_1^{-1}q,a_1^{-1}q^2,\ldots;1,\ldots,1)$ (with $1$ appearing $\lfloor \eta M \rfloor-1$ times). Let $\ell(\la^{(M)})$ denote the length of this partition (i.e., the number of nonzero parts).

\begin{proposition}\label{prop:asymptotically_equivalent}
	With the above notation the sequences of random variables 
	\begin{equation*}
		\x_{\lfloor \eta M \rfloor}(\lfloor \eta M \rfloor,\lfloor \tau M \rfloor)+\eta M
		\qquad\textnormal{and}\qquad
		\tau M-\ell(\la^{(M)})
	\end{equation*}
	are asymptotically equivalent as $M\to+\infty$ in the sense of \textnormal{\cite[Definition 5.2]{borodin2016stochastic_MM}}.
\end{proposition}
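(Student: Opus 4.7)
The plan is to use Proposition \ref{prop:matching_to_schur_measures} as the bridge between the mixed $q$-TASEP and the Schur measure, with the height function of the stochastic higher spin six vertex model serving as intermediary. First, by Proposition \ref{prop:qTASEP_moments} (or the single-point consequence of Theorem \ref{thm:capling}), $\x_{N}(N,T)+N \stackrel{d}{=} \HeightFunctionBernoulli(N+1,T)$ with the parameter choice of \Cref{sub:_q_tasep_with_special_parameters}. Since $|N-\eta M|\le 1$ and $|T-\tau M|\le 1$, these discrepancies can be absorbed into the asymptotic error, so it suffices to show that $\HeightFunctionBernoulli(N+1,T)$ and $T-\ell(\la^{(M)})$ are asymptotically equivalent in the sense of \cite[Definition 5.2]{borodin2016stochastic_MM}.

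Next I would apply the identity from Proposition \ref{prop:matching_to_schur_measures} (with $\nu_i\equiv q$),
\begin{equation*}
\EE\bigg[\prod_{i\ge 0}\frac{1}{1+\zeta q^{\HeightFunctionBernoulli(N+1,T)+i}}\bigg]
= \EE_{\SchurMeas_{(-u_1^{-1},\dots,-u_T^{-1});\rho_N}}\bigg[\prod_{j\ge 0}\frac{1+\zeta q^{\la_{T-j}+j}}{1+\zeta q^j}\bigg],
\end{equation*}
and then simplify the Schur side. Since $\la_{T-j}=0$ unless $T-\ell(\la)\le j\le T-1$, the product reduces to the range $j=T-\ell(\la),\ldots,T-1$. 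The key asymptotic step is that, under the Schur measure \eqref{Schur_specialization_matching}, the parts $\la_i$ for $1\le i\le\ell(\la)$ grow linearly in $M$, so $\zeta q^{\la_{T-j}+j}$ is exponentially small and each numerator $1+\zeta q^{\la_{T-j}+j}$ may be replaced by $1$ up to superpolynomially small error. After reindexing $i=j-(T-\ell(\la))$ and extending the product to all $i\ge 0$ (which introduces only $o(1)$ error because $q<1$ and $T-\ell(\la)\to\infty$), the Schur observable becomes
\begin{equation*}
\EE\bigg[\prod_{i\ge 0}\frac{1}{1+\zeta q^{(T-\ell(\la^{(M)}))+i}}\bigg] + o(1),
\end{equation*}
uniformly on compact subsets of $\zeta\in\C\setminus\R_{\le 0}$. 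Combining these pieces shows that the $q$-Laplace transforms of $\HeightFunctionBernoulli(N+1,T)$ and $T-\ell(\la^{(M)})$ coincide up to vanishing error, which is precisely the content of asymptotic equivalence.

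The main obstacle will be the quantitative control on the Schur partition $\la^{(M)}$ needed to make rigorous the approximation $1+\zeta q^{\la_{T-j}+j}\approx 1$ and the truncation of the tail. Specifically, one must rule out the atypical event that some $\la_i$ with $i\le\ell(\la^{(M)})$ is small, and control the relative size of $\ell(\la^{(M)})$ versus $T$, so that the error is small uniformly in $\zeta$ (on compact subsets of $\C\setminus\R_{\le 0}$) as $M\to\infty$. For this I would invoke the standard determinantal structure of the Schur measure together with the Okounkov double contour integral form of its correlation kernel, from which one reads off exponential tail bounds on the limit shape $\{\la_i^{(M)}/M\}$ and on $\ell(\la^{(M)})/M$ for the specializations \eqref{Schur_specialization_matching}; these bounds are by now standard in the Schur measure literature and suffice to convert the formal asymptotic identity above into an honest statement of asymptotic equivalence.
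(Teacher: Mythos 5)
The paper's own proof is a one-line citation to \cite[Corollary 5.11]{borodin2016stochastic_MM}, which packages exactly the ``asymptotic replacement'' of the Schur observable by the $q$-Laplace transform of $T-\ell(\la^{(M)})$ that you sketch; so your route is the one the paper implicitly relies on, just written out rather than cited. The genuine soft spot in your sketch is the justification for replacing each numerator $1+\zeta q^{\la_{T-j}+j}$ by $1$. You attribute this to ``the parts $\la_i$ for $1\le i\le\ell(\la)$ grow linearly in $M$,'' which is false: the smallest nonzero parts of $\la^{(M)}$ are $\ge 1$ but need not grow at all, so $\la_{T-j}$ can be $O(1)$ near the edge $j\approx T-\ell(\la)$. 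What actually makes the exponent large is that on the nontrivial range $j\ge T-\ell(\la^{(M)})$ one has $\la_{T-j}+j\ge 1+\bigl(T-\ell(\la^{(M)})\bigr)$, so the needed input is that $T-\ell(\la^{(M)})$ is of order $M$ (equivalently $\ell(\la^{(M)})/T$ bounded away from $1$), not growth of individual parts. You do flag the need to control $\ell(\la^{(M)})/T$ in your ``obstacle'' paragraph, but the main step should be repaired to reflect this; you should also be aware that in the flat phase $\tau/\eta\le -u^{-1}$ the quantity $T-\ell(\la^{(M)})$ is no longer $\Theta(M)$, so that case needs to be handled separately --- one of the things the general statement of \cite[Corollary 5.11]{borodin2016stochastic_MM} takes care of, and a reason the paper chose simply to cite it.
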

\begin{proof}
	This is \cite[Corollary 5.11]{borodin2016stochastic_MM} applied to our concrete situation.
\end{proof}

\begin{theorem}\label{thm:special_q_TASEP_asymptotics}
	Let $a_1\ge1$. With the above notation for any $\eta,\tau>0$ we have the convergence in probability
	\begin{equation*}
		\lim_{M\to+\infty}\frac{\x_{\lfloor \eta M \rfloor}
    (\lfloor \eta M \rfloor,\lfloor \tau M \rfloor)}{M}=
		\mathcal{X}(\eta,\tau),
	\end{equation*}
	where
	\begin{equation*}
		\mathcal{X}(\eta,\tau):=\begin{cases}
			\dfrac{-u(\tau-\eta)-2\sqrt{-u\eta\tau}}{1-u},
			&\qquad \tau/\eta>-u^{-1};\\
			-\eta,&\qquad
			0<\tau/\eta\le-u^{-1}.
		\end{cases}
	\end{equation*}
	In the case $\tau/\eta>-u^{-1}$ we also have
	\begin{equation*}
		\lim_{M\to+\infty}\prob
		\Bigl(\frac{\x_{\lfloor \eta M \rfloor}(\lfloor \eta M \rfloor,\lfloor \tau M \rfloor)-M \mathcal{X}(\eta,\tau)}{\sigma_{\eta,\tau,u} M^{1/3}}\ge-r\Bigr)=F_{GUE}(r),\qquad r\in\R,
	\end{equation*}
	where $F_{GUE}$ is the GUE Tracy--Widom distribution, and $\sigma_{\eta,\tau,u}$ is given in \eqref{Schur_variance} below.
\end{theorem}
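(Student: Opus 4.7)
The plan is to combine \Cref{prop:asymptotically_equivalent} with a saddle-point analysis of the Okounkov double-contour kernel for the Schur measure \eqref{Schur_specialization_matching}. The proposition reduces the theorem to establishing an LLN and Tracy--Widom fluctuation limits for $\lfloor\tau M\rfloor-\ell(\la^{(M)})$, so it suffices to analyze the length observable $\ell(\la^{(M)})$ under that Schur measure. To cast this as an edge observable of a standard determinantal point process, I would apply the involution $\omega$ (sending $s_\la\leftrightarrow s_{\la'}$ and swapping alpha/beta parameters within each specialization): the transposed partition $(\la^{(M)})'$ is distributed according to a Schur measure with $T$ beta-parameters equal to $-u^{-1}$ and with alpha-parameters $(a_1^{-1}q^j)_{j\ge 0}$ together with $N-1$ copies of $1$. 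The length $\ell(\la^{(M)})=(\la^{(M)})'_1$ is then the rightmost particle of the associated Okounkov determinantal point process.

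I would then write the correlation kernel as the standard double contour integral
\begin{equation*}
	K_M(x,y)=\frac{1}{(2\pi\i)^2}\oint\!\oint \frac{1}{z-w}\,\frac{\Phi_M(z)}{\Phi_M(w)}\,\frac{dw\,dz}{z^{x+1}w^{-y}},\qquad
	\Phi_M(z)=\frac{(1-u^{-1}z)^{\lfloor\tau M\rfloor}}{(a_1^{-1}z;q)_\infty\,(1-z)^{\lfloor\eta M\rfloor-1}}.
\end{equation*}
The $M$-scaled exponent extracted from $\log\Phi_M$ is
\begin{equation*}
	\mathcal S(z;\eta,\tau):=-\eta\log(1-z)+\tau\log(1-u^{-1}z),
\end{equation*}
while the factor $(a_1^{-1}z;q)_\infty^{-1}$ contributes only a bounded $O(1)$ modification whose poles $z=a_1q^{-j}$ ($j\ge 0$) lie at $|z|\ge 1$ thanks to the hypothesis $a_1\ge 1$. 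Setting $x=\mathcal X(\eta,\tau)M+r\sigma_{\eta,\tau,u}M^{1/3}$ and seeking a real double critical point $z_c$ of $\mathcal S(z)-\mathcal X\log z$ leads to the coupled conditions $\mathcal S'(z_c)=\mathcal X/z_c$ and $\mathcal S''(z_c)=-\mathcal X/z_c^{2}$, which reduce to a quadratic in $z_c$ whose discriminant vanishes precisely when $\tau/\eta=-u^{-1}$. Solving produces the closed-form $\mathcal X(\eta,\tau)$ stated in the theorem in the regime $\tau/\eta>-u^{-1}$, while in the complementary regime $z_c$ collides with the pole at $z=1$ and a BBP-type freezing forces $\ell(\la^{(M)})\approx \lfloor\tau M\rfloor$, whence $\mathcal X(\eta,\tau)=-\eta$.

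The cube-root Taylor expansion $\mathcal S(z)-\mathcal S(z_c)\sim\tfrac16\mathcal S'''(z_c)(z-z_c)^{3}$ at the double critical point, combined with a standard steepest-descent contour deformation, then gives convergence of the rescaled kernel to the Airy kernel with
\begin{equation*}
	\sigma_{\eta,\tau,u}=\Bigl(\tfrac12\,z_c^{3}\,\mathcal S'''(z_c)\Bigr)^{1/3},
\end{equation*}
which is the formula for $\sigma_{\eta,\tau,u}$ to be labeled \eqref{Schur_variance}. The identity $F_{GUE}(r)=\det(I-\mathrm{Airy}|_{[-r,\infty)})$ then yields the Tracy--Widom limit, while the LLN follows from the one-point density $K_M(x,x)$ at the same saddle.

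The main obstacle is controlling the infinite $q$-Pochhammer factor $(a_1^{-1}z;q)_\infty$ uniformly in $M$ while choosing a steepest-descent contour threading $z_c$: the contour must stay inside an annulus avoiding the pole at $z=u^{-1}$ (at $|z|=|u|^{-1}$) and the poles $z=a_1q^{-j}$ (at $|z|\ge 1$), and the hypothesis $a_1\ge 1$ is precisely the statement that the latter poles cannot interfere with the saddle-point region. A secondary technical step is trace-norm control of the kernel convergence, but this is routine given the double-critical-point structure and parallels Okounkov's original Plancherel-measure argument.
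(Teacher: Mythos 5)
Your overall strategy---reduce to $\ell(\la^{(M)})$ under the Schur measure via \Cref{prop:asymptotically_equivalent}, write the Okounkov double contour kernel, run steepest descent to the Airy kernel---is the same as the paper's, and extracting $\ell(\la^{(M)})=\la'_1$ by $\omega$-transposition is a legitimate bookkeeping alternative to the complementation the paper uses (the two produce equivalent kernels, related by reflecting the Maya diagram). However, the generating function $\Phi_M$ you wrote is wrong, and the error is fatal to the steepest-descent step. Under $\omega$ the $T$ alpha-parameters $-u^{-1}$ become $T$ beta-parameters $-u^{-1}$, which enter the Okounkov kernel through $\Pi_{\SchurMeas}(z^{-1};\cdot)^{-1}=(1-u^{-1}z^{-1})^{-T}$, i.e.\ a pole of order $T$ at $z=u^{-1}$. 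You wrote $(1-u^{-1}z)^{T}$ instead, a zero of order $T$ at $z=u$. With your $\mathcal S(z)=-\eta\log(1-z)+\tau\log(1-u^{-1}z)$ the double-critical-point condition $\mathcal S'(z_c)+z_c\,\mathcal S''(z_c)=0$ reduces to
\begin{equation*}
\frac{\eta}{(1-z_c)^2}-\frac{\tau u^{-1}}{(1-u^{-1}z_c)^2}=0,
\end{equation*}
and since $u<0$ both terms are nonnegative, so there is \emph{no} real double critical point and no Airy asymptotics can emerge. Replacing $(1-u^{-1}z)^{T}$ by $(1-u^{-1}z^{-1})^{-T}$, i.e.\ $z^{T}(z-u^{-1})^{-T}$, the condition becomes $\eta(z_c-u^{-1})^2+\tau u^{-1}(1-z_c)^2=0$ after absorbing $z^T$ into the location shift, which does have real roots. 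The paper's analysis uses the analogous phase on the un-transposed side, $G(v;x)=\eta\log(1+v)+\tau\log(u+v^{-1})-x\log(-v)$, with kernel \eqref{Schur_kernel}.

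Two secondary issues. The fluctuation scale $\sigma_{\eta,\tau,u}$ must be computed from the third derivative of the \emph{full} phase $\mathcal S(z)-\mathcal X\log z$ at $z_c$, not $\mathcal S'''(z_c)$ alone; the two differ by $-2\mathcal X/z_c^{3}=2\mathcal S''(z_c)/z_c$, which is nonzero at a double critical point. Also, your account of the regime $0<\tau/\eta\le -u^{-1}$ is off: the critical point does not collide with the pole coming from the $\lfloor\eta M\rfloor-1$ unit parameters; rather (in the paper's coordinates) $v_c$ crosses $0$ as $\tau/\eta$ decreases past $-u^{-1}$, moving from $(-1,0)$ into $(0,-u^{-1})$, and the flat limit shape $\mathcal X=-\eta$ just records that $\x_N$ cannot move left of its initial position $-N$. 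The BBP transition is a distinct phenomenon tied to $a_1<1$, which the paper treats in the remark after the theorem rather than here. Your observation that $a_1\ge 1$ keeps the poles of $(a_1^{-1}z;q)_\infty^{-1}$ at $|z|\ge1$, safely away from the saddle, is correct.
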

In the language of stochastic higher spin six vertex model a version of \Cref{thm:special_q_TASEP_asymptotics} for the step boundary condition was proven in \cite[Theorem 6.3]{borodin2016stochastic_MM}. Our limit shape and parameters of fluctuations are the same up to having $a=\sqrt{q}$ in \cite{borodin2016stochastic_MM} and the shifting by $\eta M$ coming from the $q$-TASEP particle coordinate. For completeness we give the necessary details below.
\begin{proof}[Proof of \Cref{thm:special_q_TASEP_asymptotics}]
	By \Cref{prop:asymptotically_equivalent}, it suffices to consider the asymptotic behavior of $\ell(\la^{(M)})$. The random configuration $\{\la^{(M)}_i-i\}_{i=1}^{\infty}\subset\Z$ is a determinantal point process whose correlation kernel has a double contour integral form \cite{okounkov2001infinite}:
	\begin{equation}\label{Schur_kernel}
		K(i,j)=\frac1{(2\pi\i)^2}\oint\oint
		\frac{(-a_1^{-1}v;q)_{\infty}}{(-a_1^{-1}w;q)_{\infty}}
		\Big(\frac{1+v}{1+w}\Big)^{\lfloor\eta M\rfloor-1}
		\Big(\frac{u+v^{-1}}{u+w^{-1}}\Big)^{\lfloor \tau M\rfloor}
		\frac{dv dw}{v^{i+1}w^{-j}(v-w)},\quad
		i,j\in\Z.
	\end{equation}
	Here the integration contours are positively oriented, the $w$ contour is contained inside $v$, and both contours encircle $0$ and $-u^{-1}>0$ and leave outside $-1$ and $-a_1q^{-\Z_{\ge0}}$.

	Let us comment on how to choose integration contours in \eqref{Schur_kernel} (by essentially applying the discussion of \cite[Section 5]{BorodinGorinSPB12} to our concrete situation). Formula for the kernel in \cite[Theorem~2]{okounkov2001infinite} can be understood as a generating function identity
	\begin{equation}\label{Schur_kernel_genfunc}
		\sum_{i,j\in\Z}K(i,j)v^iw^{-j}=
		\frac{\Pi_{\SchurMeas}(w^{-1};-\mathbf{u}^{-1})}{\Pi_{\SchurMeas}(v^{-1};-\mathbf{u}^{-1})}
		\frac{\Pi_{\SchurMeas}(v;\rho_{\lfloor\eta M\rfloor})}{\Pi_{\SchurMeas}(w;\rho_{\lfloor\eta M\rfloor})}
		\sum_{k=1}^{\infty}\Bigl(\frac wv\Bigr)^k.
	\end{equation}
	Here the specialization $\rho_{\lfloor\eta M\rfloor}$ is described above, and $-\mathbf{u}^{-1}=(-u^{-1},\ldots,-u^{-1})$ ($\lfloor\tau M\rfloor$ times). The formal meaning of \eqref{Schur_kernel_genfunc} comes from understanding each of the four products $\Pi_{\SchurMeas}$ in the right-hand side as a generating function of suitably specialized complete homogeneous symmetric functions. Then the coefficient by $v^iw^{-j}$ in the right-hand side is the kernel $K(i,j)$. The double contour integral \eqref{Schur_kernel} simply picks the desired coefficient. However, for this to work one needs to ensure that on the integration contours the analytic expressions in the integrand in \eqref{Schur_kernel} coincide with the generating series in the right-hand side of \eqref{Schur_kernel_genfunc}. If $|w|<|v|$, then the sum over $k$ in the right-hand side of \eqref{Schur_kernel_genfunc} turns into $w/(v-w)$ which is a part of the integrand. Next, the terms in the right-hand side of \eqref{Schur_kernel_genfunc} which contain poles are
	\begin{equation*}
		\Pi_{\SchurMeas}(w^{-1};-\mathbf{u}^{-1})=
		\frac1{(1+u^{-1}w^{-1})^{\lfloor\tau M\rfloor}}
		,\qquad
		\frac1{\Pi_{\SchurMeas}(w;\rho_{\lfloor\eta M\rfloor})}=
		\frac1{(-a_1^{-1}w;q)_{\infty}(1+w)^{\lfloor \eta M\rfloor-1}},
	\end{equation*}
	and to be able to expand these functions as generating series of complete homogeneous symmetric functions one should have $|u^{-1}w^{-1}|<1$ (hence $|w|>|u^{-1}|$) and $|w|<1$. For sufficiently large $|u|$ we can choose circles centered at zero for the $w$ and $v$ contours. To continue \eqref{Schur_kernel} to all possible $u\in(-\infty,0)$, note that the integral in the right-hand side is rational in $u$. One can show that the kernel $K(i,j)$ is a priori rational in $u$, too. Thus, we can continue identity \eqref{Schur_kernel} in $u$ as long as the integral in the right-hand side represents the same rational function. In particular, contour deformations not crossing poles of the integrand do not change the rational function represented by the integral, and so we see that \eqref{Schur_kernel} holds for all $u$ with the contours described above. 

	Denoting $\tilde K(i,j):=\mathbf{1}_{i=j}-K(i,j)$ (where $\mathbf{1}_{i=j}$ is the indicator that $i=j$) and using complementation as in the proof of \cite[Theorem 6.1]{borodin2016stochastic_MM} (in particular, observing that $-\ell(\la^{(M)})$ is the leftmost particle of the configuration complementary to $\{\la_i^{(M)}-i\}_{i\ge1}$), we can write 
	\begin{equation}\label{Schur_Fredholm_probability}
		\prob\bigl(-\ell(\la^{(M)})>x\bigr)=\det\bigl(\mathbf{1}-\tilde K\bigr)_{\ell^2(x,x-1,x-2,\ldots)},
	\end{equation}
	where $\mathbf{1}$ is the identity operator, and the right-hand side is a Fredholm determinant. The rest of the proof of \Cref{thm:special_q_TASEP_asymptotics} thus reduces to the asymptotic analysis of the kernel $\tilde K$.
		
	Interchanging the integration contours in $K$ \eqref{Schur_kernel} leads to an additional summand equal to the integral over the $v$ contour of the minus residue at $w=v$. Since this additional summand is precisely $\mathbf{1}_{i=j}$, we see that $\tilde K$ is given by the same double integral as in \eqref{Schur_kernel} with interchanged contours and a negative sign. 

	Asymptotic analysis of correlation kernels given by double contour integrals was first performed in \cite{Okounkov2002} and is rather standard by now, so let us indicate main computations without going into full detail. The integrand in $\tilde K(i,j)$ has the following large $M$ asymptotics:
	\begin{equation*}
		(-1)^{i-j}\frac{(-a_1^{-1}v;q)_{\infty}}{(-a_1^{-1}w;q)_{\infty}}
		\exp\Big\{M\Big(G(v;\tfrac iM)-G(w;\tfrac jM)\Big)\Big\}
		\frac{1}{v(w-v)},
	\end{equation*}
	where
	\begin{equation*}
		G(v;x):=\eta \log(1+v)+\tau \log(u+v^{-1})-x \log (-v).
	\end{equation*}
	If we find $x=x_c$ such that $G$ has a double critical point $v_c$, and deform the integration contours so that they come close to each other at $v_c$ and such that $\Re G(v;x_c)<\Re G(v_c;x_c)$, $\Re G(w;x_c)>\Re G(v_c;x_c)$ on the new contours, then the asymptotics of $\tilde K$ is determined by the contribution from a small neighborhood of $v_c$. As we will see later, this contribution produces the Airy kernel.

	A straightforward computation shows that
	\begin{equation*}
		x_c=\frac{\eta -\tau \pm 2 \sqrt{-u\eta\tau}}{1-u}
		,
		\qquad
		v_c=
		\frac{\pm(1-u)\sqrt{-u\eta\tau}-u (\eta +\tau )}{u (\tau +\eta  u)}
		,
	\end{equation*}
	produce a double critical point of $G$. To capture the leftmost point of a random configuration governed by the kernel $\tilde K$, we choose the solution $x_c$ with the minus sign.

	\begin{figure}[htbp]
		\includegraphics[width=.6\textwidth]{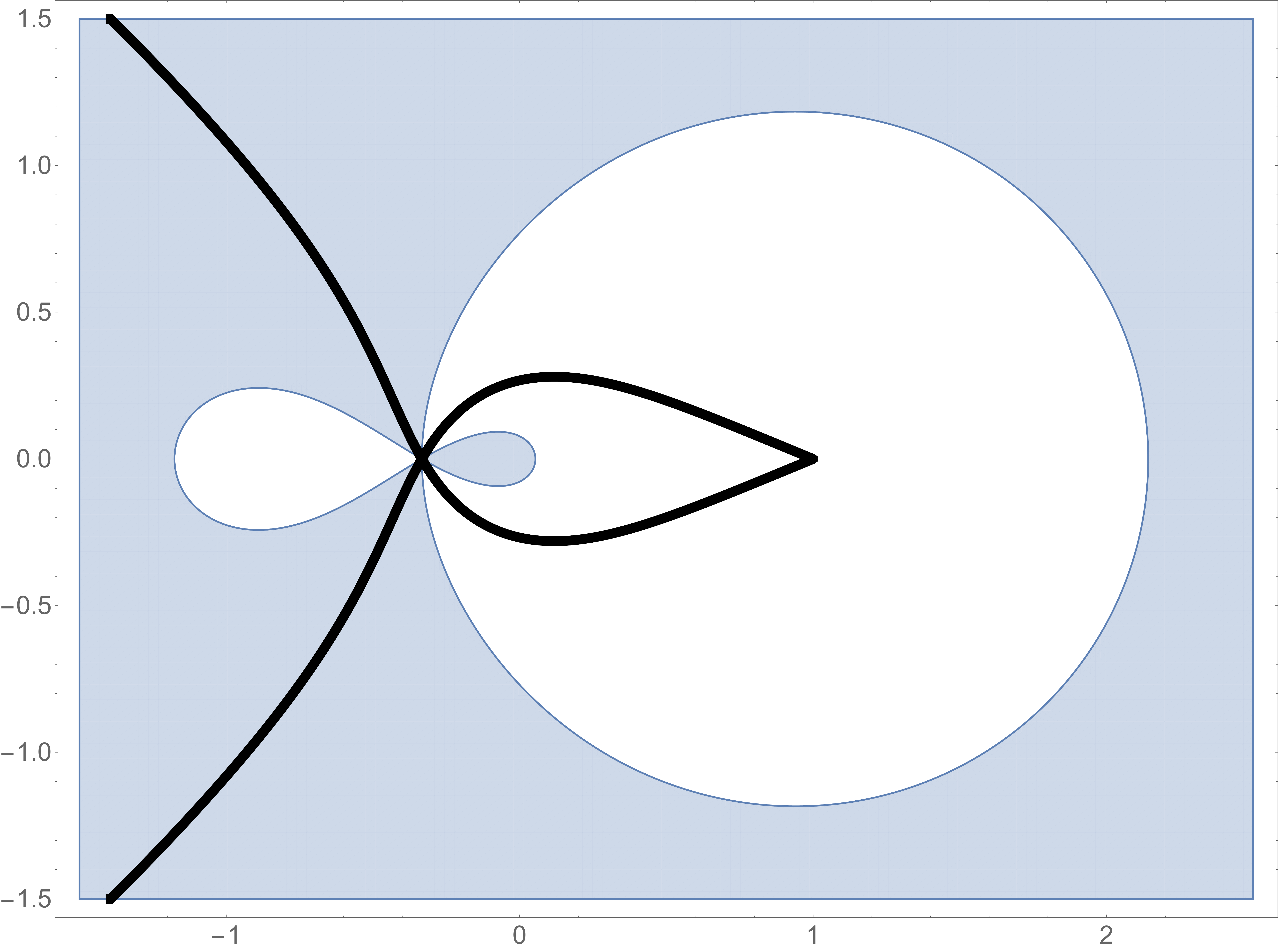}
		\caption{Contours where $\Im G(w;x_c)=\Im G(v_c;x_c)$ and $w$ is not real (they intersect at $v_c$), and regions where $\Re G(w;x_c)>\Re G(v_c;x_c)$ (shaded). The parameters are $u=-1$, $\tau=4\eta=1$, so $x_c=-7/8$ and $v_c=-1/3$.}
		\label{fig:Schur_contours}
	\end{figure}
	If $\tau/\eta>-u^{-1}$, we have $v_c\in(-1,0)$, and the regions where $\Re\bigl(G(w;x_c)-G(v_c;x_c)\bigr)$ have constant sign are shown in \Cref{fig:Schur_contours}. Because the original $v$ and $w$ contours pass between $-1$ and $0$ and $v$ is inside $w$, we can deform the contours in a desired way. Namely, the $v$ contour can be chosen to coincide with the contour $\Im G(v;x_c)=\Im G(v_c;x_c)$ which is inside the unshaded region in \Cref{fig:Schur_contours}. The $w$ contour should coincide with the other contour $\Im G(w;x_c)=\Im G(v_c;x_c)$ in a neighborhood of $v_c$, and then encircle the $v$ contour and stay inside the shaded region in \Cref{fig:Schur_contours}.

	Set 
	\begin{equation}\label{Schur_variance}
		\sigma_{\eta,\tau,u}:=-v_c(G'''(v_c;x_c))^{1/3}=
		\frac{(-u\tau\eta)^{1/6}\bigr(1+\sqrt{-u\eta/\tau}\bigl)^{2/3}
		\bigr(1-\sqrt{-u\tau/\eta}\bigl)^{2/3}}{1-u},
	\end{equation}
	and let
	\begin{equation*}
	\begin{split}
		i=x_cM-\sigma_{\eta,\tau,u}M^{1/3}\tilde x,
		\qquad
		&j=x_cM-\sigma_{\eta,\tau,u}M^{1/3}\tilde y,
		\\
		v=v_c\Big(1-\tilde v M^{-1/3}/\sigma_{\eta,\tau,u}\Big)
		,\qquad
		&w=v_c\Big(1-\tilde w M^{-1/3}/\sigma_{\eta,\tau,u}\Big),
	\end{split}
	\end{equation*}
	where $\tilde x,\tilde y\in\R$, and $\tilde v,\tilde w$ correspond to a change of the integration variables in a neighborhood of $v_c$. Then 
	\begin{equation*}
		MG(v;i/M)=MG(v_c;x_c)+\sigma_{\eta,\tau,u} \tilde x M^{1/3} \log (-v_c)+\frac{\tilde v^3}3-\tilde x\tilde v.
	\end{equation*}
	Thus, with the above scaling of $i$ and $j$ we have
	\begin{equation}\label{Airy_kernel_convergence}
		\lim_{M\to+\infty}(-1)^{i-j}\frac{e^{\sigma_{\eta,\tau,u} \tilde y M^{1/3} \log (-v_c)}}{e^{\sigma_{\eta,\tau,u} \tilde x M^{1/3} \log (-v_c)}}
		\sigma_{\eta,\tau,u}M^{1/3}
		\tilde K(i,j)=K_{\mathrm{Ai}}(\tilde x,\tilde y),
	\end{equation}
	where
	\begin{equation*}
		K_{\mathrm{Ai}}(\tilde x,\tilde y)=
		\frac{1}{(2\pi\i)^2}\int\int
		e^{\tilde v^3/3-\tilde w^3/3-
		\tilde y\tilde v+\tilde x\tilde w}
		\frac{d\tilde vd\tilde w}{\tilde v-\tilde w}
		=\frac{\mathrm{Ai}(\tilde x)\mathrm{Ai}'(\tilde y)
		-
		\mathrm{Ai}'(\tilde x)\mathrm{Ai}(\tilde y)
		}{\tilde x-\tilde y}
	\end{equation*}
	is the Airy kernel. Here $\mathrm{Ai}$ is the Airy function and $\mathrm{Ai}'$ is its derivative, and in the integral representation the $\tilde v$ contour goes from $e^{5\pi\i/3}\infty$ to $e^{\pi\i/3}\infty$ and the $\tilde w$ contour goes from $e^{4\pi\i/3}\infty$ to $e^{2\pi\i/3}\infty$ without intersecting the $\tilde v$ contour. The prefactor $(-1)^{i-j}\frac{e^{\sigma_{\eta,\tau,u} \tilde y M^{1/3} \log (-v_c)}}{e^{\sigma_{\eta,\tau,u} \tilde x M^{1/3} \log (-v_c)}}$ does not change the correlation functions (given by determinants of the kernel) and thus does not affect the asymptotics, and the factor $\sigma_{\eta,\tau,u}M^{1/3}$ comes from the rescaling of the space on which the determinantal point process lives. The convergence \eqref{Airy_kernel_convergence} implies the desired convergence to the GUE Tracy--Widom distribution which is itself given by a Fredholm determinant $F_{GUE}(\tilde x)=\det(\mathbf{1}-K_{\mathrm{Ai}})_{L^2(\tilde x,+\infty)}$ \cite{tracy_widom1994level_airy} (up to certain tail estimates of contour integrals which we omit).

	For $0<\tau/\eta\le -u^{-1}$, the double critical point $v_c$ is between $0$ and $-u^{-1}$, and by a different deformation of contours one can get the second (flat) part of the limit shape for $\x_{\lfloor \eta M \rfloor}(\lfloor \eta M \rfloor,\lfloor \tau M \rfloor)$.
\end{proof}
\begin{remark}
	The case $a_1<1$ in \Cref{thm:special_q_TASEP_asymptotics} would lead to a shock in the corresponding macroscopic limit shape. In terms of contour integrals as in the proof of \Cref{thm:special_q_TASEP_asymptotics} this means that for certain values of $(\tau,\eta,u)$ the double critical point $v_c$ is inaccessible by contour deformations, and the main contribution to the integral comes from a neighborhood of $-a_1$.

	Inside the shock the fluctuations of the particle locations are Gaussian on scale $M^{1/2}$, outside the shock they have the GUE Tracy--Widom distribution on scale $M^{1/3}$ as in \Cref{thm:special_q_TASEP_asymptotics}, and the phase transition between the two regimes is of Baik--Ben Arous--P\'{e}ch\'{e} (BBP) type. Such a phase transition was first observed (in full generality) in \cite{BBP2005phase} for random matrices and shown to hold for the continuous time $q$-TASEP in \cite{barraquand2015phase} and for the ASEP and the stochastic six vertex model in \cite{AmolBorodin2016Phase}. We will not formulate the corresponding results in our case. A computation with Schur measures leading to the BBP phase transition can be found in \cite[Appendix B]{AmolBorodin2016Phase}.

	The BBP phase transition is not the only phase transition present in the stochastic higher spin six vertex model (or its degenerations). The upcoming work \cite{BorodinPetrov2016Exp} will describe phase transitions affecting the macroscopic limit shape but not changing the fluctuation exponents.
\end{remark}
 
% subsection asymptotics (end)

% section asymptotics_of_q_tasep_via_schur_measures (end)

\printbibliography

\end{document}